\theoremstyle{plain}
\newtheorem{question}{Question}
\newtheorem{case}{Case}
\newtheorem{problem}[question]{Problem}
\newtheorem{conjecture}[question]{Conjecture}
\newtheorem{theorem}[question]{Theorem}
\newtheorem{proposition}[question]{Proposition}
\newtheorem{lemma}[question]{Lemma}
\newtheorem{remark}[question]{Remark}
\theoremstyle{definition}
\newtheorem{definition}[question]{Definition}
\numberwithin{question}{section}
\numberwithin{equation}{section}
\newtheorem*{theorem*}{Theorem}
\title{Maker-Breaker Percolation Games I: Crossing Grids}
\author{A. Nicholas Day\thanks{Institutionen f\"or matematik och matematisk statistik, Ume{\aa} Universitet, 901 87 Ume{\aa}, Sweden. Emails:  \texttt{a.nick.day@gmail.com} and \texttt{victor.falgas-ravry@umu.se}. Research supported by Swedish Research Council grant 2016-03488.} \and Victor Falgas--Ravry\footnotemark[1]}
\begin{document}
\maketitle
\begin{abstract}
Motivated by problems in percolation theory, we study the following 2-player positional game.  Let $\Lambda_{m \times n}$ be a rectangular grid-graph with $m$ vertices in each row and $n$ vertices in each column. Two players, Maker and Breaker, play in alternating turns. On each of her turns, Maker claims $p$ (as-yet unclaimed) edges of the board $\Lambda_{m \times n}$, while on each of his turns Breaker claims $q$ (as-yet unclaimed) edges of the board and destroys them. Maker wins the game if she manages to claim all the edges of a crossing path joining the left-hand side of the board to its right-hand side, otherwise Breaker wins. We call this game the $(p,q)$-crossing game on $\Lambda_{m \times n}$.

Given $m,n\in \mathbb{N}$, for which pairs $(p,q)$ does Maker have a winning strategy for the $(p,q)$-crossing game on $\Lambda_{m \times n}$? The $(1,1)$-case corresponds exactly to the popular game of Bridg-it, which is well understood due to it being a special case of the older Shannon switching game. In this paper, we study the general $(p,q)$-case. Our main result is to establish the following transition:
 \begin{itemize}
 	\item if $p\geqslant 2q$, then Maker wins the game on arbitrarily long versions of the narrowest board possible, i.e. Maker has a winning strategy for the $(2q, q)$-crossing game on $\Lambda_{m \times(q+1)}$ for any $m\in \mathbb{N}$;
 	 	\item if $p\leqslant 2q-1$, then for every width $n$ of the board, Breaker has a winning strategy for the $(p,q)$-crossing game on $\Lambda_{m \times n}$ for all sufficiently large board-lengths $m$.
 \end{itemize}
Our winning strategies in both cases adapt more generally to other grids and crossing games. In addition we pose many new questions and problems.\\
\textbf{2010 AMS subject classification:}  05C57 (primary); 05D99; 91A43
\end{abstract}

\section{Introduction}\label{section: introduction}
\subsection{Results and organisation of the paper}\label{subsection: results}
Biased Maker--Breaker games are a central area of research on positional games, in particular due to their intriguing and deep connections to resilience phenomena  in discrete random structures. Much of the research on Maker--Breaker games has focussed on the case where the ``board'' is a complete hypergraph, or an arithmetically-defined hypergraph corresponding to all the solutions to a system of equations in some finite integer interval. Typically the ``winning sets'' that Maker seeks to claim in these games all have the same size.

In this paper we focus on boards and winning sets with rather different properties: we consider rectangular grid graphs, and our winning sets consist of \emph{crossing paths}, whose sizes can vary wildly.

Explicitly, we define the  $(p,q)$-crossing game as follows. Let $\Lambda_{m \times n}$ be the rectangular grid-graph with $m$ vertices in each row and $n$ vertices in each column - our convention is to call $m$ the \emph{length} and $n$ the \emph{width} of the board.  Two players, Maker and Breaker, play in alternating turns, with Maker playing first. On each of her turns, Maker claims $p$ (as-yet unclaimed) edges of the board $\Lambda_{m \times n}$, while on each of his turns Breaker claims $q$ (as-yet unclaimed) edges of the board and destroys them.  The game ends if either Maker manages to claim all the edges of a crossing path joining the left-hand side of the board to its right-hand side, in which case we declare her the winner, or if the board reaches a state where it is not longer possible for Maker to ever claim such a left-right crossing path, in which case we declare Breaker the winner.  A natural question to ask is, given positive integers $m,n,p,q$, which player has a winning strategy for the $(p,q)$-crossing game on $\Lambda_{m \times n}$?

Our main result is the following two theorems, which show that the game undergoes a sharp transition at $p=2q$:

\begin{theorem}\label{theorem: (2q,q)-game}
Let $m,n,p$ and $q$ be natural numbers.  If $p \geqslant 2q$ and $n \geqslant q+1$, then  Maker has a winning strategy for the $(p,q)$-crossing game on $\Lambda_{m  \times n}$.
\end{theorem}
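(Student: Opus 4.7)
The plan is first to reduce to the extremal case $p = 2q$, $n = q+1$: a winning strategy for Maker in the $(2q,q)$-crossing game on $\Lambda_{m \times (q+1)}$ trivially extends to the $(p, q)$-crossing game on $\Lambda_{m \times n}$ for any $p \geqslant 2q$ and $n \geqslant q+1$, since she can restrict her play to a width-$(q+1)$ sub-board (ignoring any extra rows) and use any extra $p - 2q$ edges she must claim per turn on free edges outside her strategy, which can only help her.

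The key structural observation driving the strategy is that between any two consecutive columns of $\Lambda_{m \times (q+1)}$ there are exactly $q+1$ horizontal edges, which is one more than the $q$ edges Breaker can destroy in a single turn; thus with sufficient pressure Maker can always salvage at least one horizontal edge in each gap. My proposed strategy is then for Maker to advance a \emph{front} column by column, maintaining the invariant that after her $k$-th turn her claimed edges form a connected subgraph $T_k$ containing the entire left-hand column of vertices and reaching at least column $k+1$ on the right, with $T_k$ additionally having enough local structure near the front that Breaker cannot sever the connection on his next turn. On her $k$-th turn, Maker would (i) claim the still-available horizontal edges in the ``current'' gap just beyond her front in order to secure a connection into the next column, (ii) claim whichever vertical edges in that column are needed to glue this new edge onto $T_{k-1}$, and (iii) spend her remaining moves from the $2q$ budget to pre-claim horizontal edges in the next gap, creating threats Breaker cannot fully counter with only $q$ moves. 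The arithmetic identity $2q = (q+1) + (q-1)$ indicates that Maker's per-turn budget is just enough to secure the current gap while setting up the following one.

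The main obstacle I foresee is two-fold. First, one must pin down the precise ``robustness'' condition on $T_k$ near the front so that it both survives Breaker's response and can be re-established on Maker's next turn; this is a delicate matter of local combinatorial design, and may require Maker to keep her pre-claimed edges in specific rows adapted to the endpoint of $T_k$. Second, a careful case analysis is needed on Breaker's possible lines of attack --- destroying horizontal edges in the current gap, destroying the scarce vertical edges available for local connectivity in the current column, or attacking Maker's pre-claimed future edges --- to verify that Maker's allocation of her $2q$ moves always suffices to maintain the invariant regardless of Breaker's choice. Tracking the tight budget asymmetry $2q$ versus $q$ through each case, and in particular arguing that Maker never runs out of fallback responses as the game drifts rightward, is the heart of the argument.
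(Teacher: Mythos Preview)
Your reduction to the extremal case $p = 2q$, $n = q+1$ is correct and matches the paper. However, the main strategy you propose --- Maker constructively advancing a connected front one column per turn --- is fundamentally different from the paper's approach, and in fact runs into a concrete obstruction that the paper itself points out.

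The paper's strategy is purely \emph{defensive} and proceeds via duality. By Lemma~\ref{lemma: duality}, Breaker wins if and only if he builds a top--bottom dual crossing path; on a finite board, once all edges are claimed, the absence of such a dual crossing forces the existence of a primal left--right crossing for Maker. So Maker's task is reframed as preventing Breaker from ever completing a dual crossing. The key structural input is the isoperimetric bound of Lemma~\ref{lemma: isoperimetric lemma}: a connected set of $k$ dual edges has boundary cycle of size at most $2k+4$. Maker's strategy is to keep the board in a \emph{secure position}, meaning each of Breaker's red dual components is surrounded by Maker's blue edges except for a four-edge ``bracket'' of one of a few prescribed shapes. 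The proof shows that (i) from a secure position Breaker cannot complete a dual crossing in one turn provided $n \geqslant q+1$ (Lemma~\ref{Lemma1}), and (ii) Maker can always restore security after Breaker's $r \leqslant q$ moves using at most $2r$ edges. Part (ii) is reduced, via an auxiliary ``secure game'', to responding to Breaker's dual edges one at a time with two blue edges each, and is handled by an extensive case analysis on bracket types (Lemma~\ref{Lemma2}).

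Your front-advancing approach founders on the following attack, recorded in the remark closing Section~\ref{section: (2q,q)-game}. On the width-$(q+1)$ strip, Breaker can on each turn claim the bottom $q$ dual edges of a fresh vertical dual path located at graph distance at least $2q$ from every previously claimed edge. This creates a one-move-from-winning threat: unless Maker plays all $2q$ of her edges within distance $q$ of Breaker's new component, Breaker completes a dual crossing on his next turn. On a long board this can be repeated for $\Omega(m/q)$ turns, during which Maker has no edges to spare for her front; your invariant that $T_k$ reaches column $k+1$ after turn $k$ therefore cannot be maintained. The budget identity $2q = (q+1) + (q-1)$ is suggestive, but it presumes Breaker confines his play to the vicinity of the front, which he need not do. At the exact threshold $p = 2q$, Maker has just enough power to block Breaker everywhere but none left over to build --- which is precisely why the paper's argument is defensive rather than constructive, and why the obstacles you foresee are not merely technical.
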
 
\begin{theorem}\label{theorem: (2q-1,q)-game}
Let $m,n,p$ and $q$ be natural numbers.  There exists a natural number $m_{0} = m_{0}(n,q)$ such that if $p \leqslant 2q-1$ and $m \geqslant m_{0}$, then  Breaker has a winning strategy for the $(p,q)$-crossing game on $\Lambda_{m  \times n}$.
\end{theorem}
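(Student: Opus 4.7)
The plan is to exhibit a winning strategy for Breaker when $p \leqslant 2q - 1$, by constructing a left-right edge-cut of $\Lambda_{m \times n}$. The most elementary cuts consist of the $n$ horizontal edges between two consecutive columns (``column gap'' cuts), so it would suffice for Breaker to claim all edges of some column gap with no intervening Maker edge. However, as the $(p,q) = (1,1)$ case shows, a strategy focussed only on column gaps fails: Maker can reply to each Breaker move by playing in the same gap, defending it one-for-one. Breaker must instead exploit the richer family of ``zigzag'' cuts combining horizontal and vertical edges.

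My plan is to localise Breaker's play to a bounded-length slab of the board and then run a potential-function argument on the cuts within the slab. Fix a slab length $L = L(n, p, q)$; for $m \geqslant m_0(n, q)$ sufficiently large we can consider $\Theta(m/L)$ disjoint slabs, and by pigeonhole Breaker can commit to one in which Maker plays below some threshold rate. Within this slab, left-to-right cuts are in bijection with top-to-bottom paths in the planar dual, forming an exponentially large family $\mathcal{C}$ with every edge of the slab lying in many cuts. This overlap is what should let a single Breaker edge advance several threats in parallel, which is crucial once $p < 2q$.

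Breaker's strategy is then to maintain the potential
\[
\Phi \;=\; \sum_{\substack{C \in \mathcal{C}\\ C \cap M = \varnothing}} \alpha^{-(|C| - |C \cap B|)},
\]
for a constant $\alpha > 1$ chosen in terms of $p, q$, where $M$ and $B$ denote the Maker/Breaker edge-sets currently on the board. Breaker greedily plays the $q$ edges per turn that maximise the gain in $\Phi$. The key technical step will be a lemma showing that, with $\alpha$ appropriately chosen (most likely $\alpha = 2$, reflecting the threshold $p = 2q$), the condition $p \leqslant 2q - 1$ guarantees a non-negative net change in $\Phi$ per round. Once $\Phi$ reaches $1$, some term $\alpha^{-(|C| - |C \cap B|)}$ equals $1$, so some $C \in \mathcal{C}$ has been entirely claimed by Breaker --- an edge-cut, so Breaker wins.

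The main obstacle will be establishing the key lemma: a tight combinatorial accounting must show that Maker's $p$ edges per turn cannot destroy enough cut-weight to offset Breaker's gain. The factor $2$ in the threshold should emerge from the fact that each edge of the slab lies in essentially two ``parallel'' families of near-minimal cuts in the planar dual; optimising $L$, $\alpha$, and the choice of cut family $\mathcal{C}$ so that the inequality is sharp precisely at $p = 2q - 1$ is the technical heart of the proof. If the crude greedy strategy above proves insufficient, a natural fallback is an adaptive threshold-shifting variant in which Breaker only counts cuts $C$ with $|C \cap B|$ at or above a dynamic level, advancing his ``front'' column by column across the slab.
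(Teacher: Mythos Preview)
Your proposal is a plan, not a proof: the entire argument hinges on your ``key lemma'' that the greedy potential $\Phi$ is non-decreasing per round when $p\leqslant 2q-1$, and you do not establish it. Nor is it clear that it is true. In an Erd\H{o}s--Selfridge style argument, a single Maker edge kills \emph{all} cuts through it, while a single Breaker edge only multiplies the surviving weight through it by $\alpha$; to get a net gain you need a quantitative lower bound on the mass of cuts through the greedily chosen Breaker edge, and your heuristic that ``each edge lies in essentially two parallel families of near-minimal cuts'' is not a bound of this kind. The factor $2$ in the threshold does not emerge from the cut family in the way you sketch, and optimising over $L$, $\alpha$, and $\mathcal C$ is not a detail but the whole problem. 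Your fallback ``threshold-shifting front'' is also just a hope.

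The paper's argument is quite different and avoids any global potential over cuts. It partitions a long board into many disjoint self-dual strips $S_{(n+1)\times n}$ and uses only the fact that the first player wins the $(1,1)$-crossing game on each strip. Breaker spends his $q$ edges per turn in $q$ \emph{distinct} strips, in each one playing the single move dictated by the local $(1,1)$ winning strategy. The potential is not over cuts but over strips: a strip is worth $2$ if it is in a second-player-win position for the local $(1,1)$ game (``valid'') and $1$ if it is in a first-player-win position (``neutral''). Each Breaker edge turns a neutral strip into a valid one ($+2$); each Maker edge can demote a valid strip to neutral or kill a neutral strip ($-1$). With $q$ Breaker edges against $2q-1$ Maker edges the potential rises by at least $1$ per round, and an exponential-in-$T$ supply of strips (where $T=\vert E(S_{(n+1)\times n})\vert$) lets Breaker iterate this through $T$ phases until some strip has received a full winning $(1,1)$ sequence. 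The $2$ in $2q-1$ thus comes from the valid/neutral dichotomy of Bridg-it positions, not from any geometry of overlapping cuts.
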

In other words if Maker has at least twice the power of Breaker and the board is wide enough that Breaker cannot win in a single turn, then Maker wins the game no matter how long the board is.  On the other hand, if Maker has strictly less than twice Breaker's power, then Breaker has a winning strategy on all boards that are sufficiently long (with respect to the board width $n$  and Breaker's power $q$).  The proofs of Theorems~\ref{theorem: (2q,q)-game} and~\ref{theorem: (2q-1,q)-game} can be found in Sections~\ref{section: (2q,q)-game} and~\ref{section: (2q-1,q)-game} respectively. As we remark in Section~\ref{section: other graphs and other games}, our strategies for these two games adapt to a number of other games and grids; see in particular Theorem~\ref{theorem: general strip theorem} for a generalisation of Theorem~\ref{theorem: (2q-1,q)-game}.

The rest of this paper is organised as follows: in Section~\ref{subsection: background and motivation} we give some background and motivation for our problem. In Section~\ref{section: preliminaries} we go over some basic definitions and prove some elementary results on crossing games. For completeness, we also record a winning strategy for the $(1,1)$-crossing game on $\Lambda_{(n+1) \times n}$ which allows Maker to play any edge on her first move (this might be folklore --- that Maker has a winning strategy is well-known, but we could not find a reference to the fact any first move will do).
%, and show that if Maker gets a little more power then she can win on a slightly longer board. 
We end this paper in Section~\ref{section: concluding remarks} with a number of questions and open problems, including a discussion of connections to the study of fugacity in statistical physics and some enumeration problems in analytic combinatorics.

\subsection{Background and motivation}\label{subsection: background and motivation}

Maker--Breaker games are a class of positional games which have attracted considerable attention from researchers in combinatorics and discrete probability. The set-up is simple: we have a finite board (a set) $X$, and a collection $\mathcal{W}$ of subsets of $X$ called \emph{winning sets}. Two players, Maker and Breaker, take turns to claim as-yet unclaimed elements of $X$. Maker (typically) plays first, and claims $a$ elements in each of her turns, while Breaker claims $b$ elements on each of his. Maker's aim is to claim all the elements of a winning set $W\in \mathcal{W}$, while Breaker's aim is to thwart her, i.e. to claim at least one element from each winning set. Since the board is finite, no draws are allowed, and the main question is to determine who has a winning strategy.

Maker--Breaker games on graphs have been extensively studied since an influential paper of Chv\'atal and Erd{\H o}s~\cite{ChvatalErdos78} in the late 1970s. Important examples of such games include the connectivity game, the $k$-clique game and the Hamiltonicity game, where the board $X$ consists of the edges of a complete graph on $n$ vertices and the winning sets are spanning trees, $k$-cliques and Hamiltonian cycles respectively.

In their paper Chv\'atal and Erd{\H o}s proved that, for a variety of such games, if $n$ is sufficiently large, then Maker has a winning strategy in the case where $a=b=1$. In each case they then asked how large a bias $b=b(n)$ was required for the $(1,b)$ versions of these games to turn into Breaker's win and provided a surprising and influential \emph{random graph heuristic} for determining the value of these \emph{threshold biases}. Namely, according to this heuristic the threshold bias $b_{\star}$ at which Breaker has a winning strategy should lie  close to the threshold $b$ for a set of $\frac{1}{b+1}\binom{n}{2}$ edges chosen uniformly at random to fail, with high probability, to contain any winning set.

This random graph heuristic has been widely investigated by a large number of researchers, in particular by Beck~\cite{Beck82,Beck85, Beck93, Beck94} and Bednarska and {\L}uczak~\cite{BednarskaLuczak00, BednarskaLuczak01}. Its correctness has been rigorously established for some games, such as the connectivity~\cite{GebauerSzabo09}, $k$-clique~\cite{Beck08} and Hamiltonicity~\cite{Krivelevich11} games, but it has also been shown to fail for other games such as general $H$-games~\cite{BednarskaLuczak00} (where the winning sets are copies of some fixed, finite graph $H$ containing at least three non-isolated vertices).
%the Hamiltonicity game, where the winning sets are Hamilton cycles, and some related games where the winning sets are spanning subgraphs of $K_n$. Their argument involved the construction of random Maker strategies, which they showed could win with high probability against any Breaker strategy --- implying the existence of deterministic (albeit non-explicit) Maker strategies that win against all Breaker strategies.

In a different direction, Stojakovi{\'c} and Szab{\'o}~\cite{StojakovicSzabo05} considered playing these Maker--Breaker games on random boards, by having $X$ consist of the edges of an Erd{\H o}s--R\'enyi random graph $G_{n,p}$. As having fewer edges cannot help Maker, the natural question in this setting is: what is the threshold $p_{\star}$ such that if $p\gg p_{\star}$, then with probability $1-o(1)$ Maker has a winning strategy for the $(1,1)$-crossing game on $G_{n,p}$, while if $p\ll p_{\star}$, then with probability $1-o(1)$ Breaker has a winning strategy.  Stojakovi{\'c} and Szab{\'o} showed that for some games, such as the connectivity games,  $1/b_{\star}$ and $p_{\star}$ are of the same order, but that for others, such as the triangle game, no such relationship holds.

The intriguing connections between Maker--Breaker games and deep phenomena in discrete probability (in addition to their obvious combinatorial appeal) have led to an abiding interest in Maker--Breaker games. In addition to the graph-theoretic setting mentioned above, Maker--Breaker games have also been studied in arithmetic settings, where the board $X$ corresponds to some integer interval, and the winning sets are $r$-tuples of integers that are solutions to systems of linear equations in $r$ variables. We refer a reader to the 2008 monograph of Beck~\cite{Beck08} for a summary and exposition of some of the many results in the area known up to that point, and to the preprint of Kusch, Ru\'e, Spiegel and Szab\'o~\cite{KuschRueSpiegelSzabo17}  for some recent progress on hypergraph and arithmetic Maker--Breaker game, in particular establishing the tightness of the Bednarska--{\L}uczak random Maker strategies for a very general class of games.

In this paper, we investigate $(p,q)$--crossing games on rectangular grid-graphs. These differ from previous Maker-Breaker games on graphs in a number of ways: grid-graphs are far sparser than previously considered boards; the `winning sets', consisting of crossing paths, vary wildly in size, whereas in the previously studied examples they tended to all have the same size. Finally, we let both the aspect ratios ($m:n$) for our rectangular grids and the powers of both Maker and Breaker (the parameters $p$ and $q$) vary, whereas in previous games on graphs only Breaker's power varied, and a notion of aspect ratio was absent.

Our motivation for investigating crossing games comes from percolation theory. Percolation theory is a branch of probability theory concerned, broadly speaking, with the study of random subgraphs of infinite lattices, and in particular the emergence of infinite connected components. Since its inception in Oxford in the late 1950s, it has blossomed into a beautiful and rich area of research.  One of the most celebrated results in percolation theory is, without a doubt, the Harris--Kesten Theorem~\cite{Harris60, Kesten80} which we state below. 

Let $\Lambda$ denote the square integer lattice, that is, the graph on $\mathbb{Z}^2$ whose edges consist of pairs of vertices $\mathbf{v}, \mathbf{w}\in \mathbb{Z}^2$ lying at Euclidean distance $\| \mathbf{v}-\mathbf{w}\|=1$ from each other.  The $p$-random measure $\mu_p$ is, informally, the probability measure on subsets of $E(\Lambda)$ that includes each edge with probability $p$, independently of all the others. (We eschew some measure-theoretic subtleties here; for a rigorous definition of $\mu_p$ using cylinder events, see Bollob\'as and Riordan~\cite[Chapter 1]{BollobasRiordan06}.) 
\begin{theorem*}[Harris--Kesten Theorem]
	Let $\Lambda_p$ denote a $\mu_p$-random subgraph of $\Lambda$. Then
	\begin{itemize}
		\item if $p\leqslant \frac{1}{2}$, then almost surely $\Lambda_p$ does not contain an infinite component;
		\item if $p>\frac{1}{2}$, then almost surely $\Lambda_p$ contains an infinite component.
	\end{itemize}
\end{theorem*}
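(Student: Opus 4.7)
My plan is to exploit the self-duality of the square lattice $\mathbb{Z}^2$: the planar dual $\Lambda^*$ is (up to a shift) isomorphic to $\Lambda$, and each primal edge $e$ crosses a unique dual edge $e^*$; declaring $e^*$ \emph{dual-open} precisely when $e$ is closed gives a coupling under which $\Lambda_p$ and the dual-open subgraph $\Lambda^*_{1-p}$ have complementary edge states. The fundamental topological input is that an open primal left-right crossing of a rectangle blocks any closed dual top-bottom crossing of the same rectangle, and vice versa. Monotone coupling via i.i.d.\ uniforms on $[0,1]$ reduces the problem to two claims: no infinite open cluster exists at $p=1/2$, and an infinite open cluster does exist for every $p>1/2$.

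\emph{Subcritical half ($p\leqslant 1/2$).} I would first invoke Kolmogorov's 0-1 law to note that $\mathbb{P}_{1/2}(\text{infinite cluster exists})\in\{0,1\}$, and argue for contradiction that it equals $1$. Burton and Keane's trifurcation argument then forces uniqueness of the infinite open cluster, both in $\Lambda_{1/2}$ and, by self-duality, in the dual $\Lambda^*_{1/2}$. In a square $[-N,N]^2$, the unique infinite primal cluster reaches each side from outside with probability tending to $1$ as $N\to\infty$, and by the FKG (Harris) inequality it touches all four sides simultaneously with positive probability; the same holds for the infinite dual cluster. But four disjoint primal arms to infinity split the complement of the box into four planar regions, making four disjoint dual arms to infinity geometrically impossible --- this is Zhang's contradiction.

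\emph{Supercritical half ($p>1/2$).} Here I would use the Russo-Seymour-Welsh (RSW) machinery. Self-duality at $p=1/2$ pins the probability of an open left-right crossing of an $n\times n$ square at roughly $1/2$. The RSW theorem, a geometric gluing argument built on FKG, then upgrades this into uniform positive lower bounds $c(\rho)>0$ on the probability of open crossings of rectangles of any fixed aspect ratio $\rho$, valid at $p=1/2$. To pass from $p=1/2$ to $p>1/2$, I would apply a sharp-threshold step (Russo's formula combined with the Kahn-Kalai-Linial influence inequality, or Kesten's original arguments via bounds on pivotal edges and four-arm events) to show that for every $p>1/2$, the probability of an open left-right crossing of a $3n\times n$ rectangle tends to $1$ as $n\to\infty$. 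A standard $1$-dependent block-renormalization then tiles $\mathbb{Z}^2$ with such rectangles in alternating orientations and couples the resulting ``good block'' process with supercritical Bernoulli site percolation on a coarse-grained lattice, producing an infinite open cluster almost surely.

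The main obstacle is the sharp-threshold step in the supercritical half: upgrading crossing probabilities that are merely bounded away from $0$ at criticality into crossing probabilities that tend to $1$ at any $p>p_c$ is the delicate ingredient, and is the reason the upper bound on $p_c$ took twenty years longer than Harris's lower bound. The RSW theorem and Zhang's argument, while geometrically intricate, are comparatively robust, and once they and the sharp-threshold input are in hand the renormalization conclusion is routine.
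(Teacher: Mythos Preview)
The paper does not prove the Harris--Kesten Theorem; it is stated in Section~1.2 purely as background and motivation, with citations to the original papers of Harris and Kesten and a pointer to Bollob\'as and Riordan's monograph. There is therefore no proof in the paper against which to compare your proposal.

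That said, your sketch is a faithful outline of the modern proof (self-duality, Burton--Keane uniqueness and Zhang's argument for $p\leqslant 1/2$; RSW plus a sharp-threshold input and block renormalisation for $p>1/2$), and you correctly identify the sharp-threshold step as the delicate point. If you intend to write this up as an actual proof rather than a plan, each of the named ingredients (Burton--Keane, Zhang, RSW, the influence bound) would need to be either proved or precisely cited, since none of them is established in the present paper.
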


We began investigating \emph{Maker--Breaker percolation games}, where Maker tries to ensure the origin is contained in an infinite component, to see if some analogue of the Chvat\'al--Erd{\H o}s probabilistic intuition could hold in this setting also, despite the presence of an infinite probability space. One of the key tools in modern proofs of the Harris--Kesten theorem are the so-called \emph{Russo--Seymour--Welsh} lemmas giving bounds on the probability of crossing rectangles of various aspect ratios at $p=\frac{1}{2}$. Unsurprisingly, crossing games turned out to play an important role in our arguments when studying percolation games. In particular, the results we establish in this paper are key ingredients in the proofs of our main results on Maker--Breaker percolation games that we establish in the sequel~\cite{DayFalgasRavry19+} to the present paper.

Besides the motivation from percolation theory, we should like to stress also that crossing games are paradigmatic representatives of an important class of positional games. Indeed they are related to the older and much-studied game of Hex, and the $(1,1)$-crossing game we study here is in fact the commercially available game of Bridg-it. Which of the players wins Bridg-it under perfect plays  has been known since the late 1960s, thanks to Lehman's resolution of the more general Shannon switching game~\cite{Lehman64}. The relationship between our work in the present paper and these older games is discussed in greater detail in Sections~\ref{subsection: (1,1)-crossing} and~\ref{section: other graphs and other games}.

\section{Preliminaries}\label{section: preliminaries}

\subsection{Basic definitions and notation}\label{subsection: basic def and notation}
A graph is a pair $G=(V,E)$, where $V=V(G)$ is a set of vertices and $E=E(G)$ is a set of pairs from $V$ which form the edges of $G$.  A subgraph $H$ of $G$ is a graph with $V(H)\subseteq V(G)$ and $E(H)\subseteq E(G)$. Given $n\in \mathbb{N}$, let $[n]=\{1,2, \ldots n\}$.  In this paper, we often identify a graph with its edge-set when the underlying vertex-set is clear from context.  For the remainder of this paper, unless stated otherwise, the variables $m,n,p,q,x$ and $y$ will always be natural numbers.  

Let $\Lambda$ denote the square integer lattice, that is, the graph on $\mathbb{Z}^2$ whose edges consist of pairs of vertices $\mathbf{v}, \mathbf{w}\in \mathbb{Z}^2$ lying at Euclidean distance $\| \mathbf{v}-\mathbf{w}\|=1$ from each other.  Given $m$ and $n$, let $\Lambda_{m \times n}$ be the finite subgraph of $\Lambda$ induced by the vertex set $\{(x,y):x \in [m], y \in [n]\}$.  If $e$ is a horizontal edge in $\Lambda_{m \times n}$, that is $e = \{(x,y),(x+1,y)\}$ for some $x,y$, then we identify $e$ with its midpoint and write $e = (x+0.5,y)$.  Similarly, if $e$ is a vertical edge in $\Lambda_{m \times n}$, that is $e = \{(x,y),(x,y+1)\}$ for some $x,y$, then we denote $e$ by its midpoint and write $e = (x,y+0.5)$.  Let $S_{m \times n}$ be the graph obtained by taking $\Lambda_{m \times n}$ and removing all the edges from the set
\begin{equation}\label{equation: deleted edges}
\Big\{(1,y+0.5):  y \in [n-1]\Big\} \bigcup \Big\{(m,y+0.5):  y \in [n-1]\Big\},\notag
\end{equation}
that is, all the leftmost and rightmost vertical edges in $\Lambda_{m \times n}$.  We say a path in $S_{m \times n}$ or $\Lambda_{m \times n}$ is a \emph{left-right crossing path} if it joins some vertex $(1,y)$ on the left-hand side of the board to some vertex $(n, y')$ on the right-hand side of the board, where $y,y' \in [n]$.

We define the $(p,q)$-crossing game on $S_{m \times n}$ (respectively $\Lambda_{m\times n}$) as follows. Two players, Maker and Breaker, play in alternating turns. Maker plays first and on each of her turns claims $p$ (as-yet-unclaimed) edges of the board $S_{m \times n}$ (respectively $\Lambda_{m\times n}$); Breaker on each of his turns answers by claiming $q$ (as-yet-unclaimed) edges of the board.  The game ends if either Maker manages to claim all the edges of a left-right crossing path, in which case we declare her the winner, or if the board reaches a state where it is not longer possible for Maker to ever claim such a left-right crossing path, in which case we declare Breaker the winner.  

We shall work with crossing games on the board $S_{m \times n}$ rather than $\Lambda_{m \times n}$ for technical reasons, but for all practical purposes the two games are the same --- it can never be in a player's interest to claim an edge in $E(\Lambda_{m\times n})\setminus E(S_{m\times n})$ (so as far as winning strategies the two games are identical) and the removal of these edges makes it easier to define a dual board, as we shall shortly do below.

As a convention, we only consider the outcomes of the games under perfect play. If Maker has a winning strategy for given values $m,n,p,q$, we say that the corresponding game is a \emph{Maker win}, otherwise we say it is a \emph{Breaker win}.  Further, we follow the convention that edges claimed by Maker are coloured blue, while edges (and their dual) claimed by Breaker are coloured red.

We now define  \emph{duality} for our boards.  The dual $\Lambda^{*}$ of $\Lambda$ is the graph obtained from $\Lambda$ by shifting its vertex set by $(0.5, 0.5)$, i.e. the graph with vertex-set $\{(x +  0.5,y + 0.5): x,y \in \mathbb{Z} \} = \mathbb{Z}^{2} + (0.5,0.5)$ and edge-set consisting of all pairs of vertices lying at Euclidean distance $1$ from each other. We refer to the vertices and edges of $\Lambda^{*}$ as \emph{dual vertices} and \emph{dual edges} respectively. Just as for $\Lambda$, we identify each dual edge with its midpoint. Given an edge $e\in E(\Lambda)$, its \emph{dual} is defined to be the dual edge $e^{*}\in E(\Lambda^{*})$ such that $e$ and $e^{*}$ have the same midpoint. So for example the dual of the horizontal edge $e = (x + 0.5,y)\in E(\Lambda)$ is the vertical dual edge $e^{*} = (x + 0.5,y)^{*}$ that lies between the dual vertices $(x + 0.5,y-0.5)$ and $(x + 0.5,y+0.5)$, and the dual of the vertical edge $e = (x ,y+ 0.5)\in E(\Lambda)$ is the horizontal dual edge $e^{*} = (x ,y+ 0.5)^{*}$ that lies between the dual vertices $(x - 0.5,y+0.5)$ and $(x + 0.5,y+0.5)$.

Given a set of edges $E$ in $\Lambda$, let $E^{*} = \{e^{*} : e \in E\}$.  Given a  subgraph $\Gamma = (V,E)$ of $\Lambda$ (finite or infinite), we define its dual  $\Gamma^{*}$ to be the graph with edge-set $E^{*}$, and vertex-set consisting of all dual vertices incident to some dual edge $e^{*} \in E^{*}$.  As an example, we have $S_{m \times n}^{*}$ is a rotated and translated copy of the  graph $S_{(n+1) \times (m-1)}$. In particular, $S_{(n+1) \times n}$ is \emph{self-dual}, being isomorphic to its dual graph. Similarly, the square-integer lattice $\Lambda$ is self-dual.

% Similarly, the lattice $\Lambda$ is isomorphic to its dual $\Lambda^{*}$, and so we also say that $\Lambda$ is self dual.  
%The following lemma, which shows the importance of duality in studying Maker-Breaker crossing games, is one which we will make use of extensively throughout this paper.
In the context of Maker-Breaker crossing games, duality is important as it shows Breaker can be viewed as a ``dual Maker" aiming to build a vertical crossing dual path.
\begin{lemma}[Duality]\label{lemma: duality}
Suppose that Maker and Breaker play the $(p,q)$-crossing game on $S_{m \times n}$. At the end of the game, let $M$ be the set of edges claimed by Maker and $B$ be the set of edges claimed by Breaker.  Then precisely one of the two following statements holds:
\begin{itemize}
		\item Maker has won the game and $M$ contains a left-right crossing path  in $S_{m \times n}$,
		\item Breaker has won the game and $B^{*}$ contains a top-bottom crossing dual path of $S_{m \times n}^{*}$.
	\end{itemize}
In particular, our original game is equivalent to the game where Maker's aim is to build a left-right crossing path of $S_{m \times n}$ and Breaker's aim is to build a top-down dual crossing path of $S^{*}_{m \times n}$.	
\end{lemma}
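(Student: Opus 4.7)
The plan is to prove the dichotomy by establishing (i) mutual exclusion and (ii) completeness. For both parts we may without loss of generality extend the game after it ends by having Breaker claim every remaining unclaimed edge; this changes neither the winner nor the existence of the crossing objects in $M$ or $B^*$, so we may henceforth assume that $M$ and $B$ partition $E(S_{m\times n})$.

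For mutual exclusion, suppose for contradiction that $P\subseteq M$ is a left-right crossing path and $Q^*\subseteq B^*$ is a top-bottom dual crossing path. Drawing each primal and each dual edge as the straight line segment between its endpoints, $P$ and $Q^*$ become piecewise-linear curves inside the closed rectangle $R=[1,m]\times[0.5,n+0.5]$, with $P$ joining the left and right sides of $R$ and $Q^*$ joining the top and bottom sides. By the elementary rectangle-crossing form of the Jordan curve theorem, $P$ and $Q^*$ must meet. A short case analysis on coordinates shows that any intersection of a primal edge with a dual edge must occur at their common midpoint, in which case the two edges are duals of each other. This yields an edge $e\in P$ with $e^*\in Q^*$, and hence $e\in M\cap B$, contradicting the partition assumption.

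For completeness, suppose $M$ contains no left-right crossing path, and let $L\subseteq V(S_{m\times n})$ denote the set of vertices reachable from the left column along edges of $M$; by hypothesis $L$ avoids the right column. The edge cut $C=\{e\in E(S_{m\times n}):|e\cap L|=1\}$ lies entirely in $B$, since any $e\in C\cap M$ would extend $L$. The main obstacle is then the planar-duality step: verifying that $C^*\subseteq B^*$ contains a top-to-bottom dual crossing path of $S^*_{m\times n}$, rather than merely a tangle of paths and cycles. Here the structure of $S_{m\times n}$ is crucial, since the removal of the leftmost and rightmost vertical edges from $\Lambda_{m\times n}$ means that when we trace the dual boundary of $L$ in the planar embedding, it cannot close up along the left or right side of the board but is instead forced to exit through the top and bottom rows of dual vertices. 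I would make this rigorous by augmenting $S_{m\times n}$ with an extra vertex $L^{\bullet}$ joined to the left column and an extra vertex $R^{\bullet}$ joined to the right column, drawing the resulting planar graph in the disk, and applying the standard planar cut-path duality to translate any minimal $L^{\bullet}$-$R^{\bullet}$ separator contained in $C$ into the desired top-to-bottom dual crossing path.
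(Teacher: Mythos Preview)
Your approach differs from the paper's, which simply invokes the standard planar-duality dichotomy for $S_{m\times n}$ (Lemma~1, Chapter~3 of Bollob\'as--Riordan) as a black box and then observes that ``Breaker has won'' means exactly that $E(S_{m\times n})\setminus B$ contains no left-right crossing path, so the cited dichotomy applied to the bipartition $(E\setminus B,\,B)$ yields the dual crossing in $B^{*}$. You instead sketch a direct proof of that dichotomy via a Jordan-curve mutual-exclusion argument and a cut/boundary completeness argument; this is essentially the standard route to the cited lemma and is a perfectly reasonable (and more self-contained) choice.

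There is, however, a slip in your reduction step. You hand all unclaimed edges to \emph{Breaker}, but this is the wrong direction: when Breaker wins, the game may terminate with edges still unclaimed, and the conclusion you need concerns the \emph{original} set $B^{*}$. Your completeness argument, run on the partition $(M,\,E\setminus M)$, only produces a top-bottom dual crossing path inside $(E\setminus M)^{*}$, which in general properly contains $B^{*}$; so the assertion that the extension ``changes neither\ldots the existence of the crossing objects in\ldots $B^{*}$'' is unjustified in the direction you need. The fix is immediate: extend the other way, giving all unclaimed edges to \emph{Maker} and working with the partition $(E\setminus B,\,B)$. Since Breaker's win means precisely that $E\setminus B$ has no left-right crossing, your cut argument (with $L$ now the set of vertices reachable from the left column via edges of $E\setminus B$) places the edge-cut $C$ inside $B$ and hence the dual crossing path inside $B^{*}$, as required. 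With this one-line correction your argument goes through.
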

 \begin{proof}[Proof of Lemma \ref{lemma: duality}]
While this Lemma is intuitively obvious, writing down a formal proof is not trivial. It follows however almost directly from~\cite[Lemma~1, Chapter~3]{BollobasRiordan06} which states that for any bipartition of $E(S_{m \times n})$ into disjoint sets, $E_{1}$ and $E_{2}$, precisely one of the two following statements holds:
\begin{itemize}
		\item $E_{1}$ contains a left-right crossing path of $S_{m \times n}$,
		\item $E_{2}^{*}$ contains a top-bottom crossing dual path of $S_{m \times n}^{*}$.
	\end{itemize}
Clearly if at the end of the game Maker has won, then $M$ contains a left-right crossing path in $S_{m \times n}$.  If instead at the end of the game Breaker has won, then there is no left-right crossing path in $E(S_{m \times n}) \setminus B$ and so by the above dichotomy we have that $B^{*}$ contains a top-bottom crossing path of $S_{m \times n}^{*}$.
\end{proof}
As Lemma~\ref{lemma: duality} shows, the two players in our game actually have similar aims when viewed throught the prism of duality:  Maker and Breaker are competing for resources (edges/dual edges) to build their winning sets (left-right crossing paths/top-bottom crossing dual paths). To reflect the symmetry of their competing aims, we will sometimes refer to Maker as the \textit{horizontal player}, denoting her by $\mathcal{H}$, and to Breaker as the \textit{vertical player}, and denoting him by $\mathcal{V}$. Further we will often think of Breaker as playing on the dual board and claiming dual edges on each of his turns rather than the corresponding edges of the original board (as they do in the formal game definition).

With the help of duality, one can define the boundary of a connected component in $\Lambda$ or $\Lambda^*$.
\begin{definition}[External boundary]
For a finite connected subgraph of $\Lambda$ with vertex set $C$, there is a unique infinite connected component $C_{\infty}$ of the subgraph of $\Lambda$ induced by the vertices in $\mathbb{Z}^2\setminus C$. The \emph{external boundary} $\partial^{\infty}C$ of $C$ is the collection of dual edges from $\Lambda^{*}$ that are dual to edges joining $C$ to $C_{\infty}$ in $\Lambda$. The external boundary for a set of dual vertices from a finite connected subgraph of $\Lambda^*$ is defined mutatis mutandis.
\end{definition}
It can be shown (see~\cite[Lemma~1, Chapter~1]{BollobasRiordan06}) that the external boundary $\partial^{\infty}C$ of the vertex-set $C$ of a finite connected subgraph $H$ of $\Lambda$  is a dual cycle with $C$ in its interior. A key tool in our proof of Theorem~\ref{theorem: (2q,q)-game} will be the following bound on the size of the boundary cycle in terms of the number of edges  in $H$.
\begin{lemma}[Isoperimetric Lemma]\label{lemma: isoperimetric lemma}
Let $k \in \mathbb{N}$.  If $A$ is set of $k$ edges in $\mathbb{Z}^{2}$ forming a connected component with vertex set $C$, then the dual boundary cycle $\partial^{\infty}C$ contains at most $2k+4$ dual edges.	
\end{lemma}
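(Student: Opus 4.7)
The plan is to prove this by a double-counting argument on edge-endpoints at vertices of $C$, exploiting the fact that every vertex of $\Lambda = \mathbb{Z}^2$ has degree exactly $4$. The external boundary $\partial^{\infty} C$ is, by definition, a subset of the set of edges of $\Lambda$ joining $C$ to $\mathbb{Z}^2 \setminus C$ (it drops the edges going into finite ``holes''), so it is enough to bound this larger quantity.

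First I would observe that since $(C, A)$ is a connected subgraph of $\Lambda$ with $k$ edges, standard graph theory gives $|C| \leqslant k+1$. Next, let $k^{\ast}$ be the number of edges of $\Lambda$ both of whose endpoints lie in $C$ (i.e.\ the edge-count of the induced subgraph $\Lambda[C]$). Since $A \subseteq E(\Lambda[C])$, we have $k^{\ast} \geqslant k$. Double-counting the pairs (vertex in $C$, incident edge in $\Lambda$) gives
\begin{equation*}
4|C| \;=\; 2k^{\ast} + \bigl|\{\text{edges of $\Lambda$ joining $C$ to $\mathbb{Z}^2\setminus C$}\}\bigr|,
\end{equation*}
so the number of edges from $C$ to its complement equals $4|C| - 2k^{\ast} \leqslant 4(k+1) - 2k = 2k+4$.

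Finally, since $\partial^{\infty}C$ consists of the duals of a subset of these boundary edges (namely those whose non-$C$ endpoint lies in $C_{\infty}$ rather than a bounded hole), we conclude $|\partial^{\infty} C| \leqslant 2k+4$, as required. There is no real obstacle here; the only point requiring a little care is to separate the edge count of the given subgraph $A$ from the edge count $k^{\ast}$ of the full induced subgraph $\Lambda[C]$, and to note that using the inequality $k^{\ast} \geqslant k$ in the wrong direction would still work because only the upper bound on the boundary matters.
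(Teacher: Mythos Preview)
Your proof is correct and takes a genuinely different route from the paper's. The paper argues by induction on $k$: the base case $k=1$ has boundary of size $6=2\cdot 1+4$, and for the inductive step one deletes either an edge lying on a cycle (so $C$ is unchanged and the bound follows immediately) or a leaf edge incident to a degree-one vertex $v$ (so the boundary of $C\setminus\{v\}$ has size at most $2k+4$ by induction, and reinstating $v$ trades the dual edge $e^*$ for at most three new dual edges $f_1,f_2,f_3$, a net gain of at most $2$). Your argument instead exploits the $4$-regularity of $\mathbb{Z}^2$ directly: double-counting incidences gives $4|C|=2k^*+|\partial C|$ where $k^*=|E(\Lambda[C])|\geqslant k$ and $\partial C$ is the full edge-boundary, and combining this with $|C|\leqslant k+1$ yields the bound in one line. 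Your approach is slicker, avoids all case analysis, and makes transparent that the constant $2$ in $2k+4$ comes from $\deg-2=4-2$; the paper's inductive proof is more hands-on and tracks explicitly how the boundary evolves as edges are added. One small comment: your final parenthetical about ``using the inequality $k^*\geqslant k$ in the wrong direction'' is garbled and should simply be deleted --- reversing that inequality would indeed break the argument, so there is nothing to salvage there.
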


\begin{proof}%[Proof of Lemma] \ref{lemma: isoperimetric lemma}
	We prove the lemma by induction on $k$.  
	%It is clear that when $A$ consists of a single edge then the dual boundary cycle to $C$ contains $6$ edges, and so the result is true when $k = 1$.    
	The dual boundary cycle of a single edge has size $6$,  so our claim holds in the base case $k=1$.  Now assume that we have shown our claim holds for all components consisting of at most $k$ edges, and let $A$ be a set of $k+1$ edges forming a connected component in $\Lambda$ with vertex set $C$.

 If $A$ contains a cycle, then there exists some edge $e \in A$ such that $A \setminus \{e\}$ also gives a connected subgraph with vertex-set $C$, and so by our inductive hypothesis $\vert \partial^{\infty}C\vert\leq 2k+4$.  On the other hand if $A$ is acyclic, then the corresponding subgraph is a tree, and hence has at least one leaf (vertex of degree one). Thus there exists an edge $e \in A$ such that $A' = A \setminus \{e\}$ spans all but one vertex of $C$, say the vertex $v$.  Let $B=\partial^{\infty}(C \setminus \{v\})$; by the inductive hypothesis we know that $\vert B\vert \leqslant 2k+4$.  If $e$ is not dual to any dual edge in $B$, then $B$ is also the dual boundary cycle for  $C$, and we are done.  If on the other hand we have $e^{*} \in B$, let $f_{1},f_{2}$ and $f_{3}$ be the three dual edges that together with $e^{*}$ form the boundary cycle around the single vertex $v$. Since $e$ is the only edge of $A$ incident with $v$, none of $f_1,f_2,f_3$ lie in $A$. The union 
	\begin{equation}
	\big(B \setminus \{e^{*}\} \big) \cup \{ f_{1},f_{2},f_{3}\} \nonumber
	\end{equation}
of these dual edges with $B$ contains the external boundary $\partial^{\infty}C$ of $C$, and so this external boundary has size at most $\vert B\vert  + 2 \leqslant 2(k+1)+4 $, as required.
\end{proof}

\subsection{Elementary bounds on winning boards for the  $(p,q)$-crossing game}\label{subsection: (p,q)-crossing}
In this subsection, we make some elementary observations about winning boards for crossing games for general $(p,q)$.  We begin by giving some trivial bounds on the identity of the winner in the $(p,p)$-crossing game under optimal play on various different boards.

\begin{proposition} \label{proposition: (p,p)-game}\
\begin{enumerate}[(i)]
	\item Maker has a winning strategy for the $(p,p)$-crossing game on $S_{m \times n}$ for all $m\leqslant n +1$;
	\item Breaker has a winning strategy for the $(p,p)$-crossing game on $S_{m \times n}$ for all $m \geqslant (p+1)(n+1)$.
\end{enumerate}	
\end{proposition}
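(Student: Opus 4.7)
The plan for part~(i) is to apply strategy stealing on the self-dual board $S_{(n+1) \times n}$. By Lemma~\ref{lemma: duality} and the self-duality noted in Section~\ref{subsection: basic def and notation}, the $(p,p)$-crossing game on $S_{(n+1) \times n}$ becomes essentially symmetric: both players are building a crossing path on isomorphic copies of the same board, with Maker moving first. Suppose for contradiction that Breaker has a winning strategy $\sigma$. Then Maker plays an arbitrary first move claiming $p$ edges, and thereafter follows $\sigma$ translated via the self-duality isomorphism, treating any prescribed edge she already owns as a free play. Since extra claimed edges never hurt Maker (the game is monotone from her perspective), she wins, contradicting the supposed winning strategy $\sigma$. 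For general $m \leqslant n+1$, I would extend this either by a direct strategy-stealing argument on $S_{m \times n}$ (exploiting the asymmetry between Maker's required L-R path of length $m-1$ and Breaker's dual T-B path of length $n$ on $S^*_{m \times n} \cong S_{(n+1) \times (m-1)}$) or by having Maker simulate her winning strategy on the larger self-dual board while playing in $S_{m \times n}$, replacing any phantom move prescribed outside columns $[1,m]$ by an arbitrary unclaimed real move.

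For part~(ii), the plan is an explicit strategy for Breaker based on a block decomposition. Since $m \geqslant (p+1)(n+1)$, I partition $S_{m \times n}$ into $p+1$ disjoint vertical strips $B_1, \ldots, B_{p+1}$, each of length $n+1$ and width $n$, so each is a copy of $S_{(n+1) \times n}$. To win, Breaker only needs to build a top-to-bottom dual crossing inside a single strip, since any such crossing blocks every L-R crossing of the full board. Within each $B_i$ Breaker maintains a pairing (or a family of candidate dual crossing paths) and responds to each of Maker's moves by playing the paired dual edge in the relevant strip. A pigeonhole argument is then the crux: Maker's $p$ moves per turn can touch edges in at most $p$ of the $p+1$ strips, so at least one strip escapes Maker's attention each turn and Breaker advances there unopposed. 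The factor $p+1$ in the bound $m \geqslant (p+1)(n+1)$ should emerge directly from this pigeonhole: Maker can disrupt at most $p$ strips per turn but not all $p+1$.

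The main obstacle for part~(i) is making the reduction from $S_{(n+1) \times n}$ to $S_{m \times n}$ for $m < n+1$ fully rigorous, in particular correctly handling the phantom Breaker moves in the "extra" columns when translating a winning strategy. The main obstacle for part~(ii) is designing the pairings and quantifying the pigeonhole step precisely: one must find a robust invariant which guarantees that, even though Maker can "break" a pairing by claiming both edges of a pair in a single turn, the collective presence of $p+1$ pairings across the $p+1$ blocks still forces Breaker to complete a T-B dual crossing in at least one block within a bounded number of turns.
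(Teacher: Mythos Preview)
Your approach to part~(i) is essentially the paper's: strategy stealing on the self-dual board $S_{(n+1)\times n}$. The reduction to smaller $m$ is simpler than you make it --- once Maker wins on $S_{(n+1)\times n}$, she wins on any $S_{m\times n}$ with $m\leqslant n+1$ by straightforward monotonicity (a shorter board only makes left-right crossing easier), so no phantom-move simulation is needed.

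Part~(ii), however, has a genuine gap. Your plan is to maintain pairings in all $p+1$ strips simultaneously and to invoke pigeonhole \emph{on every turn} to find a strip where Breaker ``advances unopposed''. These two ideas conflict: if Breaker spends his $p$ moves responding via pairings to Maker's $p$ moves, he has no moves left to advance in the untouched strip; and the untouched strip may change from turn to turn, so any progress there need not accumulate. Your own ``main obstacle'' paragraph flags exactly this, and it is not a technicality --- as stated, the strategy does not work.

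The paper's argument sidesteps this entirely. Apply pigeonhole \emph{once}, to Maker's opening move: at least one of the $p+1$ strips is untouched afterwards. Breaker then commits to that single strip for the remainder of the game and ignores everything else. On that strip, a copy of the self-dual board $S_{(n+1)\times n}$, Breaker is now effectively the \emph{first} player in a $(p,p)$-game (Maker may well play fewer than $p$ edges there on a given turn, but this cannot help her). By self-duality together with the strategy-stealing argument from part~(i), applied with the roles of the two players reversed, the first player wins --- so Breaker builds a top-bottom dual crossing in that strip and wins the global game. No explicit pairings are needed; part~(ii) reduces directly to part~(i) via duality.
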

\begin{proof}%[Proof of Propostion \ref{proposition: (p,p)-game}]
Part (i) is immediate by strategy stealing: it is enough to show that Maker can win on the self-dual board $S_{(n+1) \times n}$ (playing on a narrower board can only help Maker). Suppose for contradiction that Breaker, playing second, had a winning strategy. Then Maker can player $p$ arbitrary moves on her first turn, and from then on pretend to be Breaker playing on $S_{(n+1) \times n}^{*}$, using Breaker's putative winning strategy to respond to Breaker's actual moves (and making arbitrary moves if ever asked to claim an edge she has already claimed). Maker's initial moves can never hurt her, and thus this is a winning strategy --- contradicting our assumption that Breaker has a winning strategy, since we know this game can never end in a draw. Thus Maker must have a winning strategy.

For part (ii), it is enough to show that Breaker can win on the board $S_{(p+1)(n+1) \times n}$ (playing on a wider board can only help Breaker). We divide up this board into $p+1$ copies of $S_{(n+1) \times n}$ (plus some extra edges which we ignore). On her first move, Maker must fail to claim an edge in at least one of these copies. Thereafter Breaker plays entirely in this copy. Since $S_{(n+1) \times n}$ is self-dual and Breaker is playing first in the $(p,p)$-crossing game on this copy, he has a winning strategy. (Formally, this is not quite the $(p,p)$-crossing game: by playing on other boards, Maker could play fewer than her $p$ moves in our chosen copy of $S_{(n+1) \times n}$ in any given turn --- but this can never help her.)
\end{proof}

\begin{proposition}\label{proposition: (p, p+5r)-game}
Breaker has a winning strategy for the $(p, p+5r)$-crossing game on $S_{m\times n}$ for all $n > r$ and $m \geqslant \lceil \frac{p}{r}\rceil (n+1)$.
\end{proposition}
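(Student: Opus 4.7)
The plan is to partition $S_{m\times n}$ into $k = \lceil p/r\rceil$ disjoint sub-boards $S_1,\ldots,S_k$, each of length $n+1$ and thus isomorphic to the self-dual $S_{(n+1)\times n}$. Since any left-right crossing path of $S_{m\times n}$ must cross every $S_i$, Lemma~\ref{lemma: duality} reduces the problem to showing that Breaker can guarantee a top-bottom dual crossing in just one $S_i$.

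After Maker's first move she has claimed $p$ edges, and by pigeonhole some sub-board $S^*$ contains at most $\lfloor p/k\rfloor \leq r$ of them. Breaker commits to $S^*$ for the remainder of the game and plays there (spending any surplus moves harmlessly elsewhere). Inside $S^*$ he runs a winning strategy $\sigma$ for the vertical (first) player in the $(r,r)$-crossing game on $S_{(n+1)\times n}$; the existence of such $\sigma$ follows from the strategy-stealing argument in Proposition~\ref{proposition: (p,p)-game}(i) applied dually. He simulates $\sigma$ by treating Maker's real moves in $S^*$ as the opponent's simulated moves, accumulating them until each batch of $r$ completes one simulated Maker turn, at which point Breaker plays the $r$-edge sim-response prescribed by $\sigma$. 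On turn~1 Breaker additionally plays the opening $B_1^{\text{sim}}$ inside $S^*$.

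A direct accounting shows that in $S^*$, Breaker uses at most $2r$ edges on turn~1 (the opening plus at most one sim-response, since Maker has played $\leq r$ edges in $S^*$) and at most $p+r$ edges on any subsequent turn (the number of sim-Breaker responses this turn is at most $\lceil p^{(t)}_{S^*}/r\rceil$, costing $r\lceil p^{(t)}_{S^*}/r\rceil \leq p^{(t)}_{S^*}+r$ edges). Both quantities sit comfortably within Breaker's budget of $p+5r$ edges per turn, so the simulation can always be carried out, and since $\sigma$ wins against any opponent, Breaker eventually completes a top-bottom dual crossing in $S^*$.

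The main technical obstacle is that Maker plays first in each real turn, so on turn~1 her moves in $S^*$ precede Breaker's opening $B_1^{\text{sim}}$, violating the usual simulation order and possibly claiming edges that $\sigma$'s opening requires. This is resolved by two observations: (a) Maker selecting her first simulated move without having seen $B_1^{\text{sim}}$ gives her strictly less information than in the canonical simulation, which can only weaken her and thus cannot hurt Breaker's winning strategy; and (b) there is flexibility in the choice of opening, analogous to the flexibility for $(1,1)$-crossing discussed in Section~\ref{subsection: (1,1)-crossing}, allowing Breaker to choose $B_1^{\text{sim}}$ from among the $\geq |E(S^*)|-r$ edges of $S^*$ that Maker has not claimed on turn~1, so that no overlap arises.
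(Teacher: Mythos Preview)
Your approach diverges from the paper's in a substantive way, and unfortunately it contains a genuine gap.

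The paper, like you, partitions the board into $\lceil p/r\rceil$ copies of $S_{(n+1)\times n}$ and uses pigeonhole to find a sub-board $S^*$ carrying at most $r$ of Maker's initial edges. From there, however, the paper does \emph{not} simulate an $(r,r)$-game. Instead, Breaker spends his one-time surplus of $5r$ moves on turn~1 to \emph{neutralise} Maker's $\leq r$ initial edges in $S^*$: by Lemma~\ref{lemma: isoperimetric lemma}, each connected component of $k$ Maker-edges has a dual boundary cycle of size at most $2k+4$, so claiming all but one edge of each such cycle costs at most $5r$ edges and renders Maker's initial edges useless for any crossing. After this, Breaker simply follows a first-player winning strategy for the $(p,p)$-game on the (now effectively fresh) self-dual board $S^*$. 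Crucially, one real turn then corresponds to exactly one simulated turn, so no interleaving difficulties arise.

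Your $(r,r)$-simulation with batching does create such difficulties, and you have not addressed them. Whenever $p>r$ (the only interesting case, since otherwise $\lceil p/r\rceil=1$), Maker can complete several batches of $r$ edges within a single real turn. Breaker must then compute several consecutive sim-responses in one go. But the sim-response to batch $j$ is determined by $\sigma$ from the simulated state \emph{after} batch $j$ only; nothing prevents $\sigma$ from prescribing an edge that Maker has already claimed in batch $j+1$ of the \emph{same} real turn. Concretely, take $r=1$ and let $\sigma$ be a pairing strategy (such as Gross's strategy for Bridg-it): if Maker plays both members of a pair on the same real turn, then Breaker's sim-response to the first is precisely the second, which Maker already owns. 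Your $4r$ edges of slack do not help here, since the obstruction is not budget but the fact that the edge is genuinely in Maker's hands. Points (a) and (b) in your final paragraph concern only the turn-1 opening and do not touch this issue. (Incidentally, the opening flexibility you need in (b) follows directly from the strategy-stealing proof of Proposition~\ref{proposition: (p,p)-game}(i) for arbitrary $(r,r)$; Section~\ref{subsection: (1,1)-crossing} treats only the $(1,1)$ case.)

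The neutralisation step via Lemma~\ref{lemma: isoperimetric lemma} is exactly what lets the paper sidestep this interleaving problem, and is the missing idea in your argument.
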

\begin{proof}%[Proof of Propostion \ref{proposition: (p, p+3r)-game}]
As before, it is enough to show that  Breaker can win on the board $S_{m\times n}$ with $m=\lceil \frac{p}{r}\rceil (n+1)$  (playing on a wider board can only help Breaker). Divide the board into $\lceil \frac{p}{r}\rceil$ copies of $S_{(n+1)\times n}$.  By our bounds on $n$ and $m$, Maker cannot have won on her first turn (since $m>p +1$). Also by the pigeonhole principle, there is one such copy on which Maker has played at most $r$ moves on her first turn.  For the remainder of the game, Breaker shall solely focus his efforts on this board, and so we may view Breaker as playing first on an $(n+1)\times n$ board where $r$ edges have been pre-emptively claimed by Maker.

Breaker shall only use his extra power of  $5r$ in his first turn, to `neutralise' Maker's edges by ensuring they can never be part of a left-right crossing path, and otherwise shall follow his winning strategy for the $(p,p)$-crossing game on an $(n+1)\times m$ board when he plays first --- a strategy which exists by Proposition~\ref{proposition: (p,p)-game} and the self-duality of $S_{(n+1)\times n}$.  (For completeness, other than on his first move, he plays arbitrary moves with his extra $5r$ edges and if ever requested to play a previously claimed edge). Provided his first-turn `neutralisation' works, Breaker will clearly win the game.

Lemma~\ref{lemma: isoperimetric lemma} established that a connected subgraph of $\mathbb{Z}^2$ with $k\geqslant 1$ edges has a dual boundary cycle of size at most $2k+4$. Further, observe that if we claim all but one of the edges in the dual boundary cycle to one of Maker's connected components $C$, then no left-right crossing path Maker makes can go through $C$, and it makes no difference to the outcome of the game if all of the edges inside $C$ had been claimed by Breaker instead. Thus to neutralise Maker's (at most) $r$ initial edge in Breaker's chosen subboard, Breaker claims all but one dual edge from the boundary cycles of each of the corresponding connected components. By our bound from Lemma~\ref{lemma: isoperimetric lemma}, this requires a total of at most $5r$ edges, which is exactly the extra power Breaker has.
\end{proof}

Clearly, the bounds on $m$ and $n$  in Proposition~\ref{proposition: (p,p)-game} and \ref{proposition: (p, p+5r)-game} are quite unsatisfactory, and we do not believe for a moment that they are tight.  See Section~\ref{section: concluding remarks} for a number of questions and conjectures pertaining to this.

\subsection{The $(1,1)$-crossing game: Bridg-it and the Shannon switching game}\label{subsection: (1,1)-crossing}

The $(1,1)$-crossing game played on $S_{m \times n}$ is also known as \textit{Bridg-it} (sometimes referred to as Bridge-it), and was first invented by David Gale.  Traditionally Bridg-it is played on a self dual grid, usually $S_{6 \times 5}$ or $S_{7 \times 6}$, however here we relax the definition to allow play on any grid-size.  Bridg-it bears some similarities to the celebrated game of \textit{Hex}, which is another positional crossing game played on the faces of a hexagonal lattice (see~\cite{HaywardRijswijck06} for a formal definition of \emph{Hex}), however Bridg-it is much simpler and better understood.

By Proposition \ref{proposition: (p,p)-game}, we know that in Bridg-it there is always a winning strategy (via strategy-stealing) for the first player, $\mathcal{H}$, when $m \leqslant n+1$.  When $m > n+1$ the vertical player $\mathcal{V}$ has a winning strategy which involves mirroring $\mathcal{H}$'s moves through an appropriate reflection of the grid.  
%See the appendix for a complete description of this strategy as well as a proof that it is a winning strategy for $\mathcal{V}$.  \textcolor{red}{Nick, to do.  Add this as an appendix, or rephrase}   
These two strategies (strategy-stealing and reflection strategy) have counterparts in \emph{Hex} (see e.g.~\cite{HaywardRijswijck06}). However the strategy-stealing argument does not provide an explicit winning strategy for $\mathcal{H}$, but merely proves its existence, and constructing such a strategy for $(n+1)\times n$ $\mathrm{Hex}$-boards is an extremely hard computational problem even for small $n$.

By contrast, there are several different \emph{explicit} strategies that $\mathcal{H}$ can use to win in Bridg-it whenever $m \leqslant n+1$.  The first of these to be discovered was a simple but elegant edge-pairing strategy due to Gross in 1961, see~\cite[p. 66]{Beck08} for a description.  A different strategy can be read out of a winning strategy due to Lehman~\cite{Lehman64} for a different combinatorial game, known as  the \textit{Shannon switching game}.  In addition to the crossing games studied in this paper, ideas related Lehman's winning strategy for the Shannon switching game play important role in our study of Maker-Breaker percolation games in our companion paper~\cite{DayFalgasRavry19+}. For these reasons and for completeness, we describe the Shannon switching game and its application to Bridg-it in detail below.

That strategies for the Shannon switching game may be used to construct winning strategies for $\mathcal{H}$ in Bridg-it is a well-known folklore result, which has been recorded in a number of places, see e.g.~\cite[p. 67]{Beck08}.  We present the argument below and offer the modest improvement that, on $S_{m \times n}$ with $ m \leqslant n+1$, Lehman's strategy allows $\mathcal{H}$ the freedom of picking any edge of the board on her first move and still win the entire game. (As far as we are aware, this observation has not appeared in the literature before.)

\subsubsection{The Shannon switching game}\label{subsection: Shannon switching}

The \emph{Shannon switching game} is a positional game invented by Claude Shannon.  The game is played on a triple $(G,a,b)$, where $G$ is a multigraph and $a$ and $b$ are two distinguished vertices of $G$.  At the start of the game every edge is classified as \textit{unsafe}.  Two players, \textit{Cut} and \textit{Join}, play in alternating turns in which they claim unsafe edges.  Cut plays first, and on each of his  turns picks an unsafe edge and deletes it from $G$.  Join plays second, and on each of her turns picks an unsafe edge and marks it as \textit{safe}.  The game ends when there are no unsafe edges left.  Join wins if, at the end of the game, there exists a path of safe edges from $a$ to $b$, and otherwise Cut wins if no such path exists. (Thus in our games Cut and Join correspond to Breaker and Maker respectively.)

The Shannon switching game was solved by Lehman~\cite{Lehman64}, who, for each graph, determined which of the players has a winning strategy and in addition gave an explicit description of a winning strategy in each case.
\begin{definition}
A multigraph  is \emph{$k$-positive} if it contains $k$ pairwise disjoint connected spanning subgraphs.
\end{definition}
Lehman showed that there is a winning strategy for Join in the Shannon switching game played on $(G,a,b)$ if and only if $G$ has a $2$-positive subgraph that contains both $a$ and $b$. (In fact Lehman achieved his result by generalising the Shannon switching game to a game played on matroids and solving it in that more general setting, but we will not be concerned with matroids in this paper.)

  It is the \textit{if} direction of this statement that we will need and so we reproduce its (simple) proof here. For the interested reader, a relatively short and simple proof of the \textit{only if} direction of the statement (in the language of graphs rather than matroids) was given by Mansfield in~\cite{Mansfield01}.
\begin{proposition}[Lehman~\cite{Lehman64}]\label{proposition: Join wins on 2-positive}
Suppose $a,b$ are vertices in a multigraph $G$ such that there exists a $2$-positive subgraph of $G$ containing both $a$ and $b$. Then Join has a winning strategy for the Shannon switching game played on $(G,a,b)$
\end{proposition}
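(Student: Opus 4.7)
The plan is to give an explicit strategy for Join based on maintaining two edge-disjoint spanning trees as an invariant throughout the game. Fix a $2$-positive subgraph $H$ of $G$ containing $a$ and $b$, and let $T_1,T_2$ be two edge-disjoint connected spanning subgraphs of $H$; by passing to subgraphs we may assume both are spanning trees of $H$. Join's strategy will use only edges of $H$, and will ignore moves Cut makes on edges outside $T_1\cup T_2$ (responding in such turns by an arbitrary legal move, for instance claiming an unsafe edge of $T_1$, then re-indexing the trees as described below).

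The main invariant to maintain after each pair of moves is the following: if we contract every edge Join has so far claimed, then in the resulting multigraph $G'$ (with vertex set obtained from $V(H)$ by identifying endpoints of each safe edge), the updated $T_1,T_2$ are still two edge-disjoint spanning trees consisting entirely of unsafe edges. Join's response to a Cut move deleting an edge $e$ is defined as follows. If $e\notin T_1\cup T_2$, Join simply claims any unsafe edge of $T_1\cup T_2$ (if there is none, the game is already over in Join's favour by the invariant, since then $V(G')$ is a single vertex); otherwise, say $e\in T_1$. Removing $e$ from $T_1$ splits it into two components $A,B$; since $T_2$ is a spanning tree in $G'$, there is an edge $f\in T_2$ with one endpoint in $A$ and the other in $B$. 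Join claims $f$. We then update: $T_1':=(T_1\setminus\{e\})\cup\{f\}$ and $T_2':=T_2$, both read in the new contracted graph $G''=G'/f$. One checks that in $G''$, $T_1'$ is a spanning tree (since $(T_1\setminus\{e\})\cup\{f\}$ was a spanning tree of $G'$, and contracting a tree edge yields a spanning tree), $T_2'$ is a spanning tree (contracting any edge of a spanning tree of $G'$ gives a spanning tree of $G''$), and the two are edge-disjoint because $T_1$ and $T_2$ were. Thus the invariant is preserved.

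The argument concludes by a termination plus counting argument. Since each turn strictly decreases the number of unsafe edges of $G$, the game ends in finitely many turns. At the end no unsafe edges remain; but the invariant says the contracted graph $G'$ admits two edge-disjoint spanning trees made of unsafe edges, which forces $|V(G')|=1$. Since $V(G')$ is obtained from $V(H)$ by identifying endpoints of safe edges, this means every pair of vertices of $H$, and in particular $a$ and $b$, is connected by a path of safe edges. Hence Join wins.

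The main obstacle is checking that the invariant really does propagate under all possible Cut moves, including pathological ones such as Cut deleting edges outside $H$ or outside $T_1\cup T_2$; these must be handled without breaking the invariant, and the correct bookkeeping is simply to redirect Join's ``wasted'' responses into any unsafe edge of $T_1\cup T_2$ and apply the update rule as if Cut had played that edge instead. Since this only removes unsafe edges at a pace no slower than our hypothetical scenario, the termination and final connectivity arguments go through unchanged.
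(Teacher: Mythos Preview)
Your argument is correct and follows essentially the same idea as the paper's proof: maintain two connected spanning structures, and when Cut damages one, repair it using an edge from the other. The paper keeps track of $G_i^t=(G_i\setminus C^t)\cup S^t$ as connected spanning subgraphs sharing the safe edges, whereas you contract the safe edges and maintain two edge-disjoint spanning \emph{trees} of unsafe edges in the quotient; these are equivalent bookkeepings of the same invariant.

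One small expository wrinkle: your main body says that when $e\notin T_1\cup T_2$ Join ``simply claims any unsafe edge of $T_1\cup T_2$'', but claiming $g\in T_1$ and contracting it leaves $T_2$ with a cycle in $G'/g$, so the tree invariant does not survive under that literal reading. Your final paragraph gives the correct fix --- pretend Cut played some $e'\in T_1\cup T_2$ and run the main update rule, so Join actually claims the crossing edge $f$ from the \emph{other} tree and discards $e'$ from the bookkeeping --- and with that reading the invariant is preserved. It would be cleaner to state this version up front (or, alternatively, to relax the invariant to ``edge-disjoint connected spanning subgraphs'' as the paper does, which makes the $e\notin T_1\cup T_2$ case trivial since Join's arbitrary move can only add edges to the $G_i^t$).
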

\begin{proof}
Suppose $G$ has a $2$-positive subgraph that contains both $a$ and $b$.  We may pass to this subgraph and assume that $G$ is itself $2$-positive.  Let $G_{1}$ and $G_{2}$ be two edge-disjoint connected spanning subgraphs of $G$. For each $t \geqslant 0$, let $C^{t}$ be the set of the first $t$ edges that Cut deletes from $G$, and let $S^{t}$ be the set of the first $t$ edges of $G$ that Join marks as safe.  Moreover, for each $i = 1,2$ let
\begin{equation}
G_{i}^{t} = \big(G_{i} \setminus C^{t}  \big)\cup S^{t}. \nonumber
\end{equation}
%that is, $G_{i}^{t}$ is the resulting subgraph of $G$ that one gets when you add all of the edges in 
%$J^{t}$ to $G_{i}$, but also remove all of the edges in $C^{t}$.  
Join's strategy will be to ensure that, for all $t \geqslant 0$, the graphs $G_{1}^{t}$ and $G_{2}^{t}$ are both connected spanning subgraphs of $G$.  We use induction on $t$ to show she can achieve this;  it is clear that $G_{1}^{t}$ and $G_{2}^{t}$ are both connected spanning subgraphs of $G$ when $t = 0$.  Suppose that $G_{1}^{t-1}$ and $G_{2}^{t-1}$ are both connected spanning subgraphs of $G$.  Without loss of generality, we may assume that the next edge that Cut deletes is an edge of $G_{1}$, say the edge $e=\{x,y\}$.  If $G_{1}^{t-1} \setminus \{e\}$ is still spanning and connected, then Join may play their next move arbitrarily.  If $G_{1}^{t-1} \setminus \{e\}$ is not spanning and connected, then it consists of exactly two components, one containing $x$ and the other containing $y$.  As $G_{2}^{t-1}$ is spanning and connected it contains a path from $x$ to $y$.  As $G_{1}^{t-1}$ is spanning, we must have that there exists an edge $f$ of this path that lies between the two components of $G_{1}^{t-1} \setminus \{e\}$.  As $f$ lies between the two components, $f \notin S^{t-1} \cup C^{t-1}$.  On her move, Join marks the edge $f$ as safe and adds it to $S^{t-1}$ to form $S^t$. This ensures $G_{1}^{t}$ is once again a connected spanning subgraph of $G$, as required.  Furthermore $G_{2}^{t}$ contains  $G_{2}^{t-1}$ as a subgraph, and so remains a connected spanning subgraph of $G$.  This proves our inductive statement.  When the game ends, say after Join has marked $r$ edges as safe and all other edges are unsafe, we have that $G_{1}^{r} = G_{2}^{r} = J^r$, which forms a spanning connected subgraph of $G$. In particular, there is a path of safe edges from $a$ to $b$.
\end{proof}
It is easy to extend Join's winning strategy from Proposition~\ref{proposition: Join wins on 2-positive} to the $(k,k)$-Shannon switching games where each player is allowed to claim $k$ edges on each of their turns. We leave the proof as an exercise for the reader.
\begin{proposition}\label{proposition: Join wins (k,k)-game on k-positive}
Suppose $a,b$ are vertices in a multigraph $G$ such that there exists a $(k+1)$-positive subgraph of $G$ containing both $a$ and $b$. Then Join has a winning strategy for the $(k,k)$-Shannon switching game played on $(G,a,b)$.		\qquad \qquad \qquad \qquad \qquad \qquad \qquad \qquad \qquad \qquad \qquad  \qquad \qquad \qedsymbol
\end{proposition}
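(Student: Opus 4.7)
The plan is to mimic the proof of Proposition~\ref{proposition: Join wins on 2-positive}, but now maintaining $k+1$ edge-disjoint connected spanning subgraphs simultaneously instead of two. Passing to a $(k+1)$-positive subgraph, I may assume $G$ is itself $(k+1)$-positive with pairwise edge-disjoint connected spanning subgraphs $G_1,\ldots,G_{k+1}$; as before, set $G_i^t := (G_i \setminus C^t) \cup S^t$, where $C^t,S^t$ are the edges cut and marked safe by the end of round $t$. Join's strategy will be to preserve the invariant that $G_i^t$ is connected and spanning for every $i \in [k+1]$ and every $t\geqslant 0$.

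For the inductive step, suppose the invariant holds at time $t-1$ and Cut deletes edges $e_1,\ldots,e_k$ on round $t$. Since the original $G_i$'s are pairwise edge-disjoint, each $e_j$ lies in at most one of them, so the set $A\subseteq[k+1]$ of indices $i$ for which removing $\{e_1,\ldots,e_k\}$ from $G_i^{t-1}$ disconnects it satisfies $|A|\leqslant k$. Choose any $j_0 \in [k+1]\setminus A$, which exists because $|A|< k+1$. For each $i\in A$, removal of the unique cut edge $e\in G_i\cap\{e_1,\ldots,e_k\}$ splits $G_i^{t-1}$ into two components $X_i,Y_i$; using that $G_{j_0}^{t-1}$ is connected and spanning, one can extract an edge $f_i$ of $G_{j_0}^{t-1}$ crossing the cut $(X_i,Y_i)$ from any $G_{j_0}^{t-1}$-path between $X_i$ and $Y_i$. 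Join's move is then to mark all the distinct $f_i$'s safe, padding her turn with arbitrary moves if $|\{f_i : i\in A\}|<k$.

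I expect the one delicate step to be verifying that each $f_i$ is a legal move, i.e.\ neither already safe nor equal to one of the $e_j$'s. For the first point, if $f_i\in S^{t-1}$ then $f_i$ would already lie in $G_i^{t-1}$ and cross $(X_i,Y_i)$ inside $G_i^{t-1}\setminus\{e\}$, contradicting that $X_i,Y_i$ are distinct components of that graph; hence $f_i\in G_{j_0}\setminus C^{t-1}$ is unsafe and uncut. For the second, $f_i\in G_{j_0}$ while each $e_j$ belongs to some $G_i$ with $i\neq j_0$, so edge-disjointness gives $f_i\neq e_j$. Once legality is secured, adding $f_i$ to $G_i^t$ reconnects it for each $i\in A$, and each $G_j^t$ with $j\notin A$ is connected by the very definition of $A$, so the invariant is preserved.

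Finally, when the game ends every edge is either cut or safe, so $C^r\cup S^r=E(G)$ and $G_i^r=S^r$ for each $i$. Since $G_1^r=S^r$ is a connected spanning subgraph of $G$, it contains a path of safe edges between $a$ and $b$, and Join wins. The extension to $k$-tuples per turn thus reduces to the single extra ingredient---using the $(k+1)$-st spanning subgraph as a pool of repair edges---and the legality check described above; I do not anticipate any further complications beyond these.
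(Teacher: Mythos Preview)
The paper leaves this proposition as an exercise and gives no proof, so there is nothing to compare your approach against; your strategy of maintaining $k+1$ connected spanning subgraphs and using one intact one as a repair pool is the natural extension of the $k=1$ case and is sound in outline. However, two steps fail as written.

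First, for each $i\in A$ you assume a \emph{unique} cut edge in $G_i\cap\{e_1,\ldots,e_k\}$ and hence exactly two components $X_i,Y_i$. But Cut may concentrate several of his $k$ edges in one $G_i$, so $H_i:=G_i^{t-1}\setminus\{e_1,\ldots,e_k\}$ can have $c_i>2$ components and requires $c_i-1$ repair edges rather than one. The fix is routine: $c_i-1\leqslant |G_i\cap\{e_1,\ldots,e_k\}|$, whence $\sum_i(c_i-1)\leqslant k$, and Join can iteratively extract $c_i-1$ crossing edges from the pool graph for each $i\in A$ within her budget. Second, your legality check for $f_i\ne e_j$ asserts that ``each $e_j$ belongs to some $G_i$ with $i\ne j_0$'', yet you only chose $j_0\notin A$, which merely says $G_{j_0}^{t-1}\setminus\{e_1,\ldots,e_k\}$ remains connected; this does not preclude some $e_j\in G_{j_0}$ (removing a non-bridge does not disconnect), so your chosen $f_i\in G_{j_0}^{t-1}$ could coincide with one of Cut's edges. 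The cleanest repair is to strengthen the choice of $j_0$: since the $G_i$ are edge-disjoint, $\sum_{i=1}^{k+1}|G_i\cap\{e_1,\ldots,e_k\}|\leqslant k$, so by pigeonhole some $j_0$ has $G_{j_0}\cap\{e_1,\ldots,e_k\}=\emptyset$, after which your argument goes through. Alternatively, draw repair edges from $H_{j_0}$ rather than $G_{j_0}^{t-1}$.
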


\subsubsection{Winning strategy for Maker in Bridg-it}
\begin{theorem}\label{theorem: Maker winds Bridg-it with any first move}
Maker has a winning strategy for the $(1,1)$-crossing game on $S_{(n+1)\times n}$ (i.e. the game of Bridg-it) that allows her to choose any edge she wants on her first move.
\end{theorem}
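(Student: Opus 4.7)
The plan is to translate Bridg-it on $S_{(n+1)\times n}$ into a Shannon switching game on an auxiliary graph, and then invoke Proposition~\ref{proposition: Join wins on 2-positive}. Let $\Gamma$ be the multigraph obtained from $S_{(n+1)\times n}$ by contracting the left wall $\{(1,y):y\in[n]\}$ to a single vertex $a$ and the right wall $\{(n+1,y):y\in[n]\}$ to a single vertex $b$. Since the vertical edges along these walls are already absent from $S_{(n+1)\times n}$, the contraction produces a \emph{simple} graph $\Gamma$ with $n^2-n+2$ vertices and $2n^2-2n+1$ edges, in which left-right crossing paths in $S_{(n+1)\times n}$ correspond exactly to $a$-$b$ paths. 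Thus Bridg-it on $S_{(n+1)\times n}$ is equivalent to the Shannon switching game on $(\Gamma,a,b)$ with Maker in the role of Join.

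Given any first move $f$ Maker wishes to play, let $e \in E(\Gamma)$ be its image. Maker's strategy is to play $f$, and then play as Join would in the Shannon switching game on $(\Gamma/e, \bar{a}, \bar{b})$, where $\Gamma/e$ is the multigraph obtained by contracting $\Gamma$ along $e$ and $\bar{a}, \bar{b}$ are the images of $a, b$ (which remain distinct, since $\Gamma$ has no $a$-$b$ edge). After Maker's first move it is Breaker's (Cut's) turn, so we are in the standard second-player setting of Proposition~\ref{proposition: Join wins on 2-positive}; that proposition provides Maker with a winning strategy provided $\Gamma/e$ contains a $2$-positive subgraph containing $\bar{a}$ and $\bar{b}$. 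The entire proof therefore reduces to the following key claim: for every edge $e \in E(\Gamma)$, the multigraph $\Gamma/e$ contains two edge-disjoint spanning trees.

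My approach to the key claim is via the Nash--Williams--Tutte theorem, which characterises the existence of two edge-disjoint spanning trees in $\Gamma/e$ by the partition inequality $e_{\Gamma/e}(\mathcal{P}) \geqslant 2(|\mathcal{P}|-1)$ for every partition $\mathcal{P}$ of $V(\Gamma/e)$. Using the tight edge count $|E(\Gamma)| = 2|V(\Gamma)|-3$, a short calculation translates this into the following Laman-type sparsity condition on $\Gamma$: for every $S\subseteq V(\Gamma)$ with $|S|\geqslant 2$, one has $|E(\Gamma[S])| \leqslant 2|S|-3$. I would verify this by case analysis on $|\{a,b\}\cap S|$: when $a, b \notin S$, the subgraph $\Gamma[S]$ embeds into a bipartite planar grid and hence satisfies the stronger bound $|E(\Gamma[S])| \leqslant 2|S|-4$; when exactly one of $a, b$ lies in $S$, the at-most-$n$ edges from $a$ (resp.\ $b$) to column $2$ (resp.\ column $n$) are balanced against the vertical edges in that column, with the ``column plus endpoint'' subgraph being the critical tight case; finally when $\{a,b\}\subseteq S$ the two contributions combine. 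The main obstacle is the careful execution of this case analysis, especially handling the tight configurations such as $S = \{a\}\cup\{(2,y):y\in[n]\}$ or $S = V(\Gamma)$ where $|E(\Gamma[S])| = 2|S|-3$ is attained exactly, so that the deficits still sum to at least one across the non-singleton parts of a general partition.
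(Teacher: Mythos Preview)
Your approach is correct in spirit and takes a genuinely different route from the paper. Both proofs reduce Bridg-it to the Shannon switching game on the contracted graph $\Gamma$ and invoke Proposition~\ref{proposition: Join wins on 2-positive}; the divergence is in how one shows that, for every edge $e$, there exist two spanning trees of $\Gamma$ whose only common edge is $e$. The paper does this \emph{constructively}: it bicolours the edges of $\Gamma$ (horizontal versus vertical), then, depending on the chosen first edge $e$, recolours a carefully selected transversal set $A$ so that the two colour classes, each together with $e$, form spanning trees $G_1$ and $G_2$. This is entirely elementary and yields an explicit Join strategy. You instead argue \emph{existentially}: contract $e$, observe that $|E(\Gamma/e)|=2|V(\Gamma/e)|-2$, and apply Nash--Williams--Tutte, which (via the tight edge count) you correctly reduce to the Laman sparsity condition $|E(\Gamma[S])|\leqslant 2|S|-3$ for all $|S|\geqslant 2$. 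Your route trades explicitness for a cleaner structural statement, and has the appealing feature that it proves something uniform about $\Gamma$ rather than producing ad hoc trees for each $e$.

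That said, your sketch understates the work needed in the case analysis. The case $a,b\notin S$ is fine (bipartite planarity gives $2|S|-4$ for connected pieces with $|S|\geqslant 3$; the bound $2|S|-3$ then follows for all $|S|\geqslant 2$ --- note your claim of $2|S|-4$ fails for a single edge). But when $a\in S$, writing $S'=S\setminus\{a\}$ and $k=|S'\cap\text{col }2|$, the crude bound $|E(\Gamma[S'])|\leqslant 2|S'|-3$ combined with $k$ extra edges from $a$ is \emph{not} enough: you need the sharper inequality $|E(\Gamma[S'])|\leqslant 2|S'|-k-1$, which encodes precisely your ``balancing'' intuition and exploits the fact that column-$2$ vertices lie on the boundary of the interior grid (degree $\leqslant 3$ there). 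This refined bound does hold and is not hard to prove, but it is the real content of the case and should be stated and verified rather than gestured at; the tight examples you list ($\{a\}\cup\text{col }2$ and $S=V(\Gamma)$) are exactly where it is attained.
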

\begin{proof}
%We adapt Lehman's st	
%We will now use the above strategy from the Shannon Switching game to construct a Bridg-it strategy that allows $\mathcal{H}$ to pick any edge on their first turn and still win whenever $m \leqslant n+1$.  
%We may assume that $m = n+1$, as a shorter board can only help the horizontal player.  
We begin by $2$-colouring the edges of $S_{(n+1) \times n}$. All horizontal edges (i.e. all edges of the form $(x, y+0.5)$) are assigned the colour green, while all vertical edges (i.e. all edges of the form $(x, y+0.5)$) are coloured orange.  The horizontal player $\mathcal{H}$ (Maker)  then picks an arbitrary edge $e$  as her first edge and colours it blue. Based on the choice of $e$, we define a set $A$ of  green edges which $\mathcal{H}$ shall recolour and use in her strategy.

If $e$ is a green edge, we let $A$ be any set of $n-1$ green edges such that no two edges in $A\cup\{e\}$ have the same $x$-coordinate, and no two edges in $A\cup\{e\}$ have the same $y$-coordinate.  If instead $e$ was an orange edge, say $e = (x,y+0.5)$, then let $f_{1} = (x+0.5,y)$ and $f_{2} = (x-0.5,y+1)$.  Let $A'$ be any set of $n-2$ green edges such that no two edges in $A' \cup \{f_{1},f_{2}\}$ have the same $x$ coordinate, and no two edges in $A' \cup \{f_{1},f_{2}\}$ have the same $y$ coordinate.  Let $A = A' \cup \{f_{1},f_{2}\}$.

In either case, we recolour all the edges in $A$ with the colour orange.  Let $G$ be the graph formed from $S_{(n+1) \times n}$ by contracting all vertices $(1,y)$ into a single vertex $a$, and contracting all vertices of the form $(n+1,y)$ into a single vertex $b$.   There is a one-to-one correspondence between the edges of $S_{(n+1) \times n}$ and $G$, so we may consider the colouring that $G$ inherits from $S_{(n+1) \times n}$.  Let $e'$ be in edge in $G$ that corresponds to the edge $e$ in $S_{(n+1) \times n}$, i.e. the unique blue edge in the graph. 
Let $G_{1}$ be the subgraph of $G$ whose edge set consists of the set of green edges together with the unique blue edge $e'$.  Similarly, let $G_{2}$ be the subgraph of orange edges together with the blue edge $e'$. 
%$G$ whose edge set is the set of orange edges in $G$, together with the edge $e'$.  
See Figure \ref{Fig2.1} for an example of these graphs when the first edge that $\mathcal{H}$ chose was an orange edge.

\begin{figure}[ht]
    \centering
	\includegraphics[scale=1]{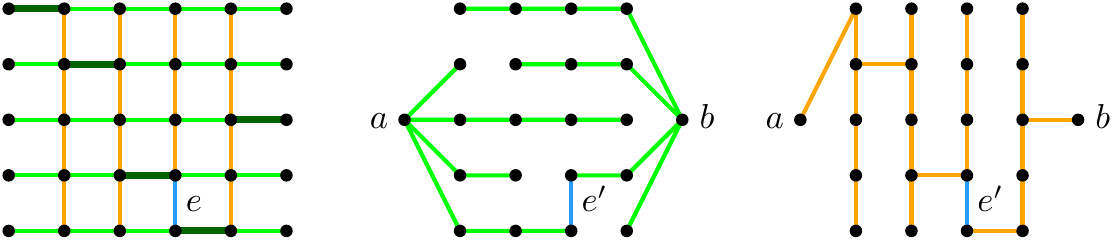}
	\caption{An example of the initial colouring $\mathcal{H}$ uses for her winning strategy.  In the first graph, the blue edge is first edge that $\mathcal{H}$ plays and the set of dark green edges are an example of a suitable set $A$ for the recolouring.  The second and third diagrams show the graphs $G_{1}$ and $G_{2}$ respectively, which are the graphs that arise from the colouring inherited by $G$.}
	\label{Fig2.1}
\end{figure}

It is easy to see that $G_{1}$ and $G_{2}$ are both connected spanning subgraphs of $G$ containing $a$ and $b$.  Moreover, the only edge these two graphs share in common is the edge $e'$ that Maker has claimed on her first turn.  Thus, if we consider this edge as `safe', then we know by Propostion~\ref{proposition: Join wins on 2-positive} that Join has an explicit winning strategy on this graph when playing the Shannon switching game, where the two distinguished vertices $a$ and $b$ are the left- and right-most vertices.  Thus $\mathcal{H}$'s strategy in Bridg-it is simply to lift Join's strategy from the Shannon switching game on $G$ to the $(1,1)$-crossing game on $S_{(n+1) \times n}$.  At the end of the Shannon switching game on $G$ we know that Join has constructed a path of safe edges from $a$ to $b$.  When lifted back to $S_{(n+1) \times n}$ this path is a left-right crossing path of $S_{(n+1) \times n}$, as required.\end{proof}
%\\
%\textcolor{red}{The below is to be finished.}
%\\ 
 %We begin by modifying the winning strategy for Bridg'it described in the previous subsection to show that if Maker gets a little extra power, then she can win on a slightly longer board.
%\begin{proposition}\label{proposition: Maker gets an extra edge every M turns}
%	Suppose Maker and Breaker play a variant of the $(1,1)$-crossing game where every $M$ turns Maker gets to claim an extra edge. Then Maker has a winning strategy when playing on the board $S_{m \times n}$ for every $m$ with
%	\[m \leqslant  n-1 + \frac{1}{M}\log_2 n.\]
%\end{proposition}
%\begin{proof}
	
%\end{proof}
%Of course, we don't believe for a moment that the bound on $m$ in Proposition~\ref{proposition: Maker gets an extra edge every M turns} is  tight --- surely Maker should be able to win on a significantly longer board, given this extra power! See Question~\ref{question: extra power} in Section~\ref{section: concluding remarks}.

\section{The $(2q-1,q)$-crossing game: Breaker wins on sufficiently long boards}\label{section: (2q-1,q)-game}
%\textcolor{red}{Observe related to the Box game introduced by Chvatal and Erd{\H o}s~\cite{ChvatalErdos78} and solved by Hamidoune and Las Vergnas~\cite{HamidouneLasVergnas87}.}
In this section we prove Theorem \ref{theorem: (2q-1,q)-game}, which states that if $m$ is sufficiently large (with respect to $q$ and $n$), then Breaker, also referred to as the vertical player $\mathcal{V}$, has a winning strategy for the $(2q-1,q)$-crossing game on $S_{m \times n}$.
\begin{proof}[Proof of Theorem \ref{theorem: (2q-1,q)-game}]
Let $T$ be the number of edges in $S_{(n+1) \times n}$, that is $T = n^{2} + (n-1)^2$.  Let \begin{equation}
m_{0} = m_{0}(q,n) = (n+1)\big((6q-2)^{T} + 2q-1\big). \nonumber
\end{equation}
We split the board $S_{m_{0} \times n}$ into $(6q-2)^{T} + 2q-1$ disjoint copies of $S_{(n+1) \times n}$, which we call strips.  Recall our convention that edges claimed by Breaker are coloured red. At any point during the game, we say a strip is $k$\textit{-valid} if it contains exactly $k$ red edges and is in a winning position for $\mathcal{V}$ in the $(1,1)$-crossing game on $S_{(n+1) \times n}$ when $\mathcal{V}$ plays second.  We say a strip is $k$\textit{-neutral} if it contains exactly $k$ red edges and is in a winning position for $\mathcal{V}$ in the $(1,1)$-crossing game when $\mathcal{V}$ gets to play first.  If a strip is neither $k$-valid nor $k$-neutral for any integer $k$ we say that it is \textit{invalid}.  Note that if a strip is $k$-valid, then it is also $k$-neutral.

We know by Proposition \ref{proposition: (p,p)-game} that every strip is $0$-neutral at the start of the game.  The game begins with $\mathcal{H}$ playing edges in up to $2q-1$ different strips, possibly making them invalid in the process.  At this point, the vertical player $\mathcal{V}$'s strategy will proceed in $T+1$ phases, with phase $0$ starting after $\mathcal{H}$'s initial turn.  For each $k \in\{ 0,1,\ldots,T\}$, $\mathcal{V}$'s strategy will ensure that at the beginning of phase $k$ (i) it is $\mathcal{V}$'s turn to play, and (ii) there are at least $(6p-2)^{T - k}$ $k$-neutral strips. Note that this implies that at the start of phase $T$ there will be at least one $T$-neutral strip, which by definition must contain a path of red dual edges from the top of the strip to the bottom of the strip, and thus $\mathcal{V}$ wins the game.

%We will show by induction on $k$ that $\mathcal{V}$ can ensure that for each $k \in\{ 0,1,\ldots,T\}$ phase $k$ begins on $\mathcal{V}$'s turn to play and 
%at the start of phase $k$ it will be $\mathcal{V}$'s turn to play and the number of $k$-neutral strips will be at least $(6p-2)^{T - k}$.  

Clearly (i) and (ii) both hold for $k = 0$.  Let us now show that if (i) and (ii) both hold at the beginning of phase $k$, then $\mathcal{V}$ can ensure they both hold at the beginning of phase  $k+1$ too.  On each turn in phase $k$, the vertical player $\mathcal{V}$ will choose $q$ different $k$-neutral strips and play a single edge in each that turns these $k$-neutral strips into $(k+1)$-valid strips.  The horizontal player $\mathcal{H}$ can now distribute their $2q -1$ edges among all of the strips. Each edge that $\mathcal{H}$ plays can either turn a $(k+1)$-valid strip into a $(k+1)$-neutral strip, or turn a $k$-neutral or $(k+1)$-neutral strip into an invalid one (or can be played in another kind of strip, in which case we ignore it).

For each $t \in \mathbb{Z}_{\geqslant 0}$, let $A_{t}=A_t(k)$ be the number of $(k+1)$-valid strips after a combined total of $t$ edges have been claimed by the two players in phase $k$ of the game (where for convenience we imagine the two players play the edges on their turn in some arbitrary order).  Similarly, let $B_{t}=B_t(k)$ be the number of $(k+1)$-neutral strips after a combined total of $t$ edges have been played by the two players in phase $k$.  Let $R_{t}=R_t(k)$ be defined by $R_t= 2A_{t} + B_{t}$.  %We have that, if after $t$ edges have been placed that 

How does $R_t$ vary with $t$? If the next edge to be claimed is one of $\mathcal{V}$'s, then $R_{t+1} = R_{t} + 2$.  On the other hand, if the next edge to be claimed is one of $\mathcal{H}$'s, then $R_{t+1} \geqslant R_{t} -1$.  As the two players claim a combined total of $3q-1$ edges on each turn of the game, we have that $R_{r(3q-1)} \geqslant r$ for all $r \in \mathbb{Z}_{\geq0}$, until either phase $k$ ends or $\mathcal{V}$ runs out of $k$-neutral strips.

Now $\mathcal{V}$ decides that phase $k$ ends (and phase $k+1$ begins) when $R_{r(3q-1)} \geqslant 2(6q-2)^{T-k-1}$ for some $r \in \mathbb{Z}_{\geqslant 0}$.  Note that after $\mathcal{H}$ and $\mathcal{V}$ have both completed their turns, the number of $k$-neutral strips has decreased by at most $3q-1$.  Thus, as the number of $k$-neutral strips at the start of phase $k$ is at least $(6q-2)^{T-k}$, we know that the number of $k$-neutral strips for $\mathcal{V}$ to play in will not run out before $R_{r(3q-1)} \geqslant 2(6q-2)^{T-k-1}$.  As $R_{r(3q-1)} \geqslant 2(6q-2)^{T-k-1}$ we have that the number of $(k+1)$-neutral strips at the start of phase $k+1$ is at least $(6q-2)^{T-k-1}$ and further that it is $\mathcal{V}$'s turn to play, so that (i) and (ii) both hold as required.
\end{proof}

\section{The $(2q, q)$-crossing game: Maker wins on  arbitrarily long and narrowest possible boards}\label{section: (2q,q)-game}

In this section we prove Theorem \ref{theorem: (2q,q)-game}, which states that if $n \geqslant q+1$, then Maker, also referred to as the horizontal player $\mathcal{H}$, has a winning strategy for $(2q,q)$-crossing-game on $S_{m \times n}$, for any $m\in \mathbb{Z}_{\geqslant 0}$.  Note that the condition $n \geqslant q+1$ is necessary as if $n \leqslant q$, then $\mathcal{V}$ could win the game in a single turn.  We in fact prove a more general result, showing $\mathcal{H}$ can win the \emph{$q$-double-response game} (defined below) --- this will not complicate the argument, and the greater generality will  allow us to apply these results to the study of percolation games in the sequel to this paper~\cite{DayFalgasRavry19+}. A key idea in our proof will be to consider a third game, the \emph{secure game}, where $\mathcal{V}$ plays one edge at at time but is given the extra power of reclaiming some of $\mathcal{H}$'s edges.  This will allow us to treat a $(2q,q)$ game like a $(2,1)$ game, which is much more amenable to analysis, and we shall show that even with $\mathcal{V}$'s extra powers, $\mathcal{H}$ still has a winning strategy.
  
Let $S_{\infty \times n}$ be the infinite subgraph of $\Lambda$ induced by the vertex set $\{(x,y):x \in \mathbb{Z}, y \in [n]\}$.  The $q$\textit{-double-response game} is a game played by two players, a horizontal player $\mathcal{H}$ and a vertical player $\mathcal{V}$, on the edges of $S_{\infty \times n}$.  The game being with $\mathcal{V}$ playing first. On each turn $t$, $\mathcal{V}$ picks an integer $r_{t} \in [q]$ and then claims $r_{t}$ as-yet unclaimed edges in $S_{\infty \times n}$ for himself; then $\mathcal{H}$ answers by claiming $2r_t$  as-yet unclaimed edges in response to $\mathcal{V}$'s move. In this game, $\mathcal{V}$'s aim is to claim a set of edges corresponding to a top-bottom crossing path of dual edges, and we say $\mathcal{V}$ wins if he is able to do so.  The horizontal player $\mathcal{H}$'s aim is to prevent this from ever happening, and we say $\mathcal{H}$ wins the game if she is able to do so.  We remark that throughout this section we will always view the game through the lens of duality, so that $\mathcal{V}$ always claims dual edges.

We will show that if $n \geqslant q+1$, then $\mathcal{H}$ has a winning strategy for the $q$-double-response game.  Clearly this implies $\mathcal{H}$  has a winning strategy in $(2q,q)$-crossing-game on $S_{m \times n}$, playing as Maker (and even surrendering her first move).  Thus Theorem~\ref{theorem: (2q,q)-game} is immediate from the following.
\begin{theorem}\label{theorem: double-response}
If $n \geqslant q+1$, then $\mathcal{H}$ can win the $q$-double-response game on $S_{\infty \times n}$.
\end{theorem}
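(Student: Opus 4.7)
The plan is to realise the secure-game framework hinted at in the preamble of the section. I would formally define the \emph{secure game} on $S_{\infty \times n}$ as follows: $\mathcal{V}$ and $\mathcal{H}$ alternate with $\mathcal{V}$ moving first; on each of $\mathcal{V}$'s turns he either claims a single unclaimed dual edge, or \emph{reclaims} one of $\mathcal{H}$'s previously claimed primal edges and converts it to one of his dual edges; on each of $\mathcal{H}$'s turns she claims two (as yet unclaimed, and not reclaimed) primal edges. $\mathcal{V}$'s win condition is as before. The first step is the reduction: I would show that a winning strategy for $\mathcal{H}$ in the secure game on $S_{\infty \times n}$ induces a winning strategy for her in the $q$-double-response game on the same board. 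The translation is to simulate each batch of $r_{t} \leq q$ dual edges played by $\mathcal{V}$ as a sequence of $r_{t}$ consecutive turns of the secure game; the role of reclaims is precisely to model the fact that $\mathcal{H}$ in the double-response game must commit to her $2r_{t}$ responses only after $\mathcal{V}$'s entire batch is revealed, so that the secure game's $\mathcal{H}$, who sees $\mathcal{V}$'s first sub-move before producing her first two responses, may be forced by $\mathcal{V}$'s reclaim power to retroactively ``change her mind'' about those responses once $\mathcal{V}$'s later sub-moves are revealed.

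Next I would build an explicit, purely local strategy for $\mathcal{H}$ in the secure game. The idea is to maintain the invariant that after every turn, each connected component $C$ of $\mathcal{V}$'s dual edges is surrounded by a primal ``cap'' consisting of $\mathcal{H}$'s edges on its two sides (leftmost and rightmost in the $y$-direction toward the top and bottom of the strip), which prevents any dual extension of $C$ toward the top or bottom boundary of the strip. When $\mathcal{V}$ on his turn either (a) claims a fresh dual edge that enlarges or merges existing components, or (b) reclaims one of $\mathcal{H}$'s cap-edges and converts it to a dual edge extending some component, $\mathcal{H}$ uses her two allotted primal edges to rebuild the cap locally at the exactly one ``gap'' that $\mathcal{V}$'s move has opened. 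Because each secure-game $\mathcal{V}$-move is a single edge, at most one cap is broken per turn, and restoring a cap requires at most two edges --- the one directly capping the new dual vertex at the top of the component, and the symmetric one at the bottom.

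The width hypothesis $n \geq q + 1$ enters the argument in two complementary ways, and identifying exactly when each is invoked is the main bookkeeping task. First, it guarantees that no single batch of $\mathcal{V}$'s $r_t \leq q$ dual edges can connect top to bottom in one turn, and more generally that between consecutive $\mathcal{H}$-responses the height of any component of $\mathcal{V}$'s dual edges grows by less than the full width $n$. Second, and more delicately, it is used to argue that the primal edges $\mathcal{H}$ wishes to play to restore her cap are indeed always available: the lateral room afforded by a width of at least $q+1$ ensures that the two edges she needs have not yet been pre-empted either by an earlier cap of $\mathcal{H}$'s own or by a dual edge of $\mathcal{V}$.

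The main obstacle I anticipate is the case analysis surrounding reclaims. When $\mathcal{V}$ reclaims a cap edge, several components of dual edges may simultaneously merge or grow, and the resulting ``damage'' to $\mathcal{H}$'s invariant has to be provably reparable with only two edges; this is where the reduction to a one-edge-at-a-time secure game pays off, since the worst case is a single broken cap on a single component, restorable by the symmetric two-edge patch described above. Once the invariant is shown to survive every turn, no component of $\mathcal{V}$'s dual edges can ever reach both the top and the bottom of the strip, so $\mathcal{V}$ never wins and $\mathcal{H}$'s strategy is proved winning; chaining this back through the reduction gives Theorem~\ref{theorem: double-response} and hence Theorem~\ref{theorem: (2q,q)-game}.
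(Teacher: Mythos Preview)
Your overall architecture---reduce to a one-edge-at-a-time ``secure game'' with reclaim power, maintain a local invariant around each red component, then lift back to the $q$-double-response game---matches the paper's. But two concrete pieces are wrong. First, your secure game gives $\mathcal{H}$ only two edges per turn even after a reclaim; this is too few. In the paper's version $\mathcal{H}$ receives $b+2$ edges, where $b\in\{0,1,2\}$ counts the blue edges $\mathcal{V}$ just broke (blue \emph{double}-edges are allowed, and are needed precisely where two components' securing paths overlap). When $\mathcal{V}$ reclaims a blue path-edge, the repair genuinely costs three edges---the three remaining edges of the $4$-cycle around the newly exposed dual vertex---and the reduction's budget inequality $|L_i|\le 2i$ survives because each secure-game turn nets exactly $+2$ surviving blue edges. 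Second, your two-edge ``cap'' invariant is too weak as stated: $\mathcal{V}$ can simply go sideways around it. The paper's invariant encloses each floating component by a blue path $P$ together with a four-edge \emph{bracket} of one of four carefully chosen shapes (and secures each top/bottom component by a path plus a single \emph{gate} edge); showing this can be maintained through every possible $\mathcal{V}$-move is a substantial case analysis (Lemma~\ref{Lemma2}), and the bracket shapes are exactly what makes the $b+2$ budget suffice.

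You also misplace the hypothesis $n\ge q+1$. It is not used to guarantee that $\mathcal{H}$'s desired edges are available---$\mathcal{H}$'s secure-game strategy is purely local and independent of $q$, and the strip is infinite in the $x$-direction, so ``lateral room'' is not the issue. It enters exactly once, in a separate lemma (Lemma~\ref{Lemma1}): the bracket geometry forces each new dual edge in $\mathcal{V}$'s batch to gain at most one unit of height, so $\le q$ edges cannot span height $n\ge q+1$ from a secure position. Finally, the reduction needs more care than you indicate: the paper imposes three restrictions on $\mathcal{V}$'s secure-game moves (no red dual cycles or arches; no joining a top to a bottom component; no breaking a securing path in a way that converts a floating component into a top/bottom one), and must then choose a \emph{judicious ordering} of the edges in $\mathcal{V}$'s batch---processing edges within each top/bottom component in order of increasing distance from its root---to ensure those restrictions are respected during the simulated play.
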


Before we prove Theorem \ref{theorem: double-response}, let us sketch the main ideas behind the proof and give some preliminary definitions.  We define an \textit{arch} to be a path of edges that starts and ends at a bottommost vertex or starts and ends at a topmost vertex in $S_{\infty \times n}$.  Similarly, we define a \textit{dual arch} to be a path of dual edges that starts and ends at a bottommost dual vertex or starts and ends at a topmost dual vertex in $S_{\infty \times n}^{*}$.

We may assume that $\mathcal{V}$ never claims either a dual edge as red that would create a cycle of red dual edges or a dual arch of red dual edges. Indeed, if $\mathcal{V}$ plays such a dual edge $e$, then at any stage later on in the game, if there exists a path $P$ of red dual edges from the top of the grid to the bottom of the grid, then there still exists such a path if we remove $e$. In particular, the result of the game cannot depend on the identity of the player who claimed $e$ (or equivalently its dual $e^*$). Therefore if such a dual edge $e$ was played, we can ignore it and pretend $\mathcal{V}$ has claimed some other edge.

A key ingredient in the proof will be Lemma~\ref{lemma: isoperimetric lemma}, which stated that if $A$ is a set of $k$ edges in $\mathbb{Z}^{2}$ that form a connected component $C$, then the dual boundary cycle to $C$ contains at most $2k+4$ dual edges.	 While we do not use Lemma \ref{lemma: isoperimetric lemma} directly, it is the ``explanation'' for why our proof works, and it will be helpful for the reader to keep it in mind throughout this section.

Suppose that $\mathcal{H}$ was able to ensure that, at the end of each of her turns she has claimed every edge of every boundary cycle of every component created by $\mathcal{V}$'s red dual edges.  If so, then as any top-bottom dual crossing path needs at least $n\geqslant q+1$ dual edges, $\mathcal{V}$ would be unable to win on any turn --- and so $\mathcal{H}$ clearly wins the game.  Unfortunately $\mathcal{H}$ cannot always claim all the edges of every boundary cycle.  For example, if $\mathcal{V}$ plays $q$ pairwise disjoint and sufficiently spaced-out dual edges, then $\mathcal{H}$ would need $6q$ edges to claim all the edges in each of the boundary cycles of the $q$ components formed by these dual edges.  However what $\mathcal{H}$ can hope for, given Lemma~\ref{lemma: isoperimetric lemma},  is to claim all but at most $4$  edges in every boundary cycle of every component of red dual edges.  Our strategy will show that $\mathcal{H}$ can indeed do this, and can do it in such a way that $\mathcal{V}$ will never be able to create a top-bottom dual crossing path, even by connecting up components created over many different turns.  To make this precise, we need some definitions.
\begin{definition}[Components]
For the rest of this section, whenever we refer to a \textit{component} we mean, at a given stage in the game, a maximal set of two or more dual vertices connected by some path of red dual edges.  We say a component is a \textit{top component} if it contains at least one dual vertex from the top of the grid, while we say a component is a \textit{bottom component} if it contains at least one dual vertex from the bottom of the grid.  If a component is neither a top nor a bottom component, then we say that it is a \textit{floating component}.	
\end{definition}
\begin{definition}[Interiors]
Given a closed cycle $C$ of blue edges, we define the \textit{interior} of $C$ to be the set of dual vertices $v$ such that every dual path from $v$ to a top- or bottommost dual vertex must contain the dual of an edge in $C$.  If $A$ is an arch that starts and ends at a bottommost vertex, then we define the \textit{interior} of $A$ to be the set of dual vertices $v$ such that every dual path from $v$ to a topmost vertex contains the dual of an edge in $A$. Similarly, if $A$ is an arch that starts and ends at a topmost vertex, then we define the \textit{interior} of $A$ to be the set of dual vertices $v$ such that every dual path from $v$ to a bottommost vertex contains the dual of some edge in $A$.
\end{definition}
We now come to the key definition of \emph{brackets}. Underpinning our strategy for $\mathcal{H}$ is the fact that she can ensure the $4$ edges in a component's boundary cycle she is unable to claim have a nice form, namely that of one of the following brackets.  See Figure \ref{Fig1} for a picture of these different bracket types, together with their corners and interior dual vertices, as defined below.
\begin{definition}[Brackets]
We say the edges 
\begin{equation}
\{(x+0.5,y),(x+1.5,y),(x+2,y+0.5),(x+2,y+1.5)\} \nonumber
\end{equation}
form a \emph{bracket} of \emph{Type $1$} if none of them are red.  We call the vertices $(x,y)$ and $(x+2,y+2)$ the \emph{corners} of the bracket, and we call the dual vertices $(x+0.5,y+0.5),(x+1.5,y+0.5)$ and $(x+1.5,y+1.5)$ the \emph{interior} dual vertices of the bracket.  We say the edges 
\begin{equation}
\{(x+0.5,y),(x+1,y+0.5),(x+1.5,y+1),(x+2,y+1.5)\} \nonumber
\end{equation}
form a \emph{bracket} of \emph{Type $2$} if none of them are red.  We call the vertices $(x,y)$ and $(x+2,y+2)$ the \textit{corners} of the bracket, and we call the dual vertices $(x+0.5,y+0.5)$ and $(x+1.5,y+1.5)$ the \textit{interior} dual vertices of the bracket.  
We say the edges 
\begin{equation}
\{(x,y - 0.5),(x+0.5,y-1),(x+1,y-0.5),(x+1,y+0.5)\} \nonumber
\end{equation}
form a \emph{bracket} of \emph{Type $3^{+}$} if none of them are red.  We call the vertices $(x,y)$ and $(x+1,y+1)$ the \textit{corners} of the bracket, and we call the dual vertices $(x+0.5,y-0.5)$ and $(x+0.5,y+0.5)$ the \textit{interior} dual vertices of the bracket.  Finally, we say the edges 
\begin{equation}
\{(x+0.5,y),(x+1.5,y),(x+2,y+0.5),(x+1.5,y+1)\} \nonumber
\end{equation}
form a \emph{bracket} of \emph{Type $3^{-}$} if none of them are red.  We call the vertices $(x,y)$ and $(x+1,y+1)$ the \textit{corners} of the bracket, and we call the dual vertices $(x+0.5,y+0.5)$ and $(x+1.5,y+0.5)$ the \textit{interior} dual vertices of the bracket.  
\end{definition}
\begin{figure}[ht]
	\centering
	\includegraphics[scale=1]{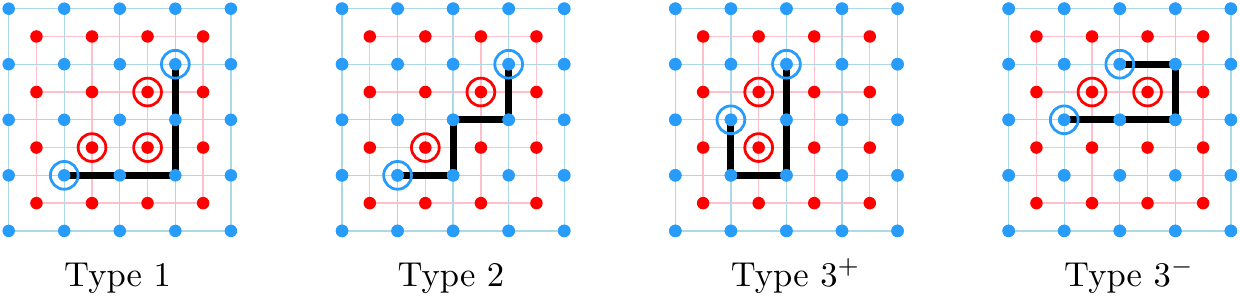}
	\caption{The four different bracket types, represented by the black edges in each picture.  For each bracket the circled blue vertices are its corners, while the circled red dual vertices are its interior dual vertices.}
	\label{Fig1}
\end{figure}	
\begin{remark}\label{remark: bracket symmetry}
	Any bracket of Type $1$ or Type $2$ is preserved under the reflection that switches its two corners.  Moreover, if you reflect a bracket of Type $3^{+}$ through the reflection that switches its two corner vertices, then you end up with a bracket of Type $3^{-}$, and vice-versa.  More generally the set of brackets is closed under reflections through lines parallel to $x+y=0$.
\end{remark}
We will make use of the above remark to reduce the amount of (tedious but necessary) case-checking required in the proof of Theorem~\ref{theorem: double-response}. 
\begin{definition}[Secure floating component]	
At any stage of the game, we say a floating component $C$ is \emph{secure} if 
\begin{enumerate}[(i)]
	\item there exists a bracket $B$ (of any type) and a path $P$ of blue edges such that the corners of $B$ are the end points of the path $P$;
	\item the interior dual vertices of the bracket $B$ are in $C$;
	\item $C$ is in the interior of the cycle formed by $P\cup B$;
	\item for every edge $f \in P$, at least one of the dual vertices of the dual edge $f^{*}$ is in $C$. %\textcolor{red}{VFR: do we actually need condition (iv) ? VFR: ah, for the bound on the arches....}
\end{enumerate}
%four conditions are simultaneously met.  The first condition is that .  The second condition is that .  The third condition is that .  The fourth and final condition is that, for all edges .   
\end{definition}
\begin{definition}[Secure bottom component, gate]
If $C$ is a bottom component, then  (as  we may assume $\mathcal{V}$ never plays a dual edge that creates a dual arch of red dual edges)  $C$ contains a unique bottommost dual vertex  $v = (x+0.5,0.5)$.  We say that $C$ is \emph{secure} if there exist a non-red edge $e = (x',1.5)$ for some $x' \in \mathbb{Z}$ with $x' \geqslant x+1$, and a path $P$ of blue edges from the vertex $(x',2)$ to the vertex $(x,1)$, such that 
\begin{enumerate}[(i)]
	\item if $x' > x+1$, then $e$ is in fact a blue edge;
	\item $C$ is contained in the interior of  $P \cup \{e\}$;
	\item for every edge $f \in P$, at least one of the dual vertices of the dual edge $f^{*}$ is in $C$.
\end{enumerate}
We say that the edge $e$ is the bottom component $C$'s \emph{gate}, and if this edge $e$ is blue, then we say that $C$ is \emph{extra secure}.
\end{definition}
\begin{definition}[Secure top component, gate]
If $C$ is a top component, then (as we may assume that $\mathcal{V}$ never plays a dual edge that creates a dual arch of red dual edges)  $C$ contains a unique topmost dual vertex  $v = (x+0.5,n+0.5)$.  We say that $C$ is \emph{secure} if there exist a non-red edge $e = (x',n-0.5)$ for some $x' \in \mathbb{Z}$ with $x' \geqslant x+1$, and a path $P$ of blue edges from the vertex $(x',n-1)$ to the vertex $(x,n)$, such that 
\begin{enumerate}[(i)]
	\item if $x' > x+1$ then $e$ is in fact a blue edge;
	\item $C$ is contained in the interior of  $P \cup \{e\}$
	\item for every edge $f \in P$, at least one of the dual vertices of the dual edge $f^{*}$ is in $C$.
\end{enumerate}
We say that the edge $e$ is the top component $C$'s \emph{gate}, and if this edge $e$ is blue, then we say that $C$ is \emph{extra secure}.
\end{definition}

We say the grid is \emph{secure} at a given stage of the game if every component is secure.  Note that if the grid is secure, then no component can simultaneously be a top component and a bottom component, i.e. there is no top-bottom red dual crossing path.  See Figure \ref{Fig2} for an example of a grid in a secure position.  
%We proceed by proving the following lemma.
\begin{figure}[ht]
    \centering
	\includegraphics[scale=1.3]{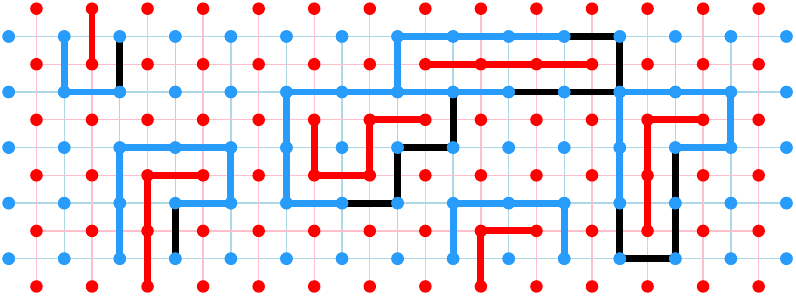}
	\caption{An example of a section of the grid $S_{\infty \times 5}$ in a secure position.  The black edges are the unclaimed edges that form the brackets or gates of the various components.}
	\label{Fig2}
\end{figure}

\begin{lemma}\label{Lemma1}
Let $n \geqslant q+1$.  If the grid is in a secure position at the start of $\mathcal{V}$'s turn in the $q$-double-response game played on $S_{\infty \times n}$, then $\mathcal{V}$ cannot win in a single turn. 
\end{lemma}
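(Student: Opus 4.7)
The plan is to argue by contradiction. Suppose $\mathcal{V}$ claims $r \leqslant q$ new dual edges on this turn and creates a top-bottom red dual crossing path $P^*$. Since every dual edge has $y$-extent at most one and $P^*$ must span the $n$ units between the dual-vertex rows $y = 0.5$ and $y = n + 0.5$, its length is at least $n > r$, so at least one edge of $P^*$ was already red before the turn, and $P^*$ visits at least one pre-existing secure component.

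I would decompose $P^*$ into maximal runs of edges that were red before the turn (``old'') and edges that $\mathcal{V}$ just claimed (``new''): the old runs form segments $S_1, \ldots, S_s$ inside pre-existing secure components $C_1, \ldots, C_s$, separated by ``new'' bridge-segments. Each entry into or exit from a component $C_j$ must cross its boundary through one of its unclaimed boundary edges, which for a secure component consist of the four-edge bracket (floating) or the one-edge gate (top/bottom).

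The first technical step is a direct inspection of the four bracket types and the gate definitions. In every case, the dual vertices of $C_j$ that lie at an endpoint of an unclaimed boundary edge all have $y$-coordinates inside an interval of length at most one. Hence the net downward $y$-progress $d_j$ of $S_j$ satisfies $d_j \leqslant 1$; since each new dual edge also descends by at most one, the identity ``total descent of $P^*$ equals $n$'' yields
\begin{equation*}
n \;\leqslant\; \sum_{j=1}^s d_j \;+\; r \;\leqslant\; s + r.
\end{equation*}

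The main obstacle, and the heart of the proof, is to sharpen this inequality into a contradiction with $r \leqslant q \leqslant n - 1$. The crucial additional observation is a \emph{rigidity} property of brackets: two distinct secure components cannot share an unclaimed boundary edge, since any primal edge lying in two brackets forces the interior dual vertex assignments of both brackets to coincide, placing the two components in a common dual-edge-connected set. Therefore each bridge between two distinct floating components along $P^*$ consists of at least two new dual edges, while bridges attached to a top or bottom component can, in exceptional geometric configurations, share a single edge between the gate and a neighbouring floating bracket. Enumerating the possible top/floating/bottom sequences of visited components and combining these bridge-length lower bounds with $s + r \geqslant n$, a careful case analysis produces $r \geqslant n$ and the desired contradiction. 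I expect this final combinatorial tally — verifying that every possible sequence of secure components visited by $P^*$ consumes strictly more than $q$ new dual edges — to be the principal technical difficulty of the proof.
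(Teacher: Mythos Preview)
Your decomposition and the inequality $n\leqslant s+r$ are correct, but the step you identify as ``the heart of the proof'' does not work, for two independent reasons.

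First, the rigidity claim is false. Two distinct secure floating components \emph{can} have brackets sharing an unclaimed edge: for instance a Type~$3^{+}$ bracket with corners $(x,y),(x+1,y+1)$ and a Type~$3^{+}$ bracket with corners $(x-1,y-1),(x,y)$ share the edge $(x,y-0.5)$, while their interior dual-vertex sets $\{(x+0.5,y-0.5),(x+0.5,y+0.5)\}$ and $\{(x-0.5,y-1.5),(x-0.5,y-0.5)\}$ are disjoint. The paper's proof of Lemma~\ref{Lemma2} in fact devotes an entire Case~3 to precisely this situation, where a single red dual edge crosses a shared bracket edge and merges two components. So bridges between distinct floating components can have length one.

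Second, even granting the two-edge bridge claim, the arithmetic would not close: with $s$ old segments you would get roughly $r\geqslant 2(s-1)$, which combined with $n\leqslant s+r$ only yields $r\gtrsim 2n/3$, not $r\geqslant n$.

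The missing idea is a sharper charging. The paper tracks, for the $i$-th new edge $e_i$ along $P$ (ordered from bottom to top), the height $y_i$ of its \emph{upper} endpoint $v_i^{+}$, and shows inductively that $y_i\leqslant i$. The point is that when $e_k$ and $e_{k+1}$ are separated by a floating component $C$, \emph{both} edges cross the same bracket $B$; inspection of each bracket type shows that any interior dual vertex of $B$ and any exterior endpoint of a dual edge across $B$ lie within vertical distance~$1$. Hence $y_{k+1}\leqslant y_k+1$: the passage through $C$ together with the exit edge $e_{k+1}$ counts as a single unit of progress, not two. This immediately gives $y_l\leqslant l$, and a short endgame argument (ruling out that $e_l$ reaches a top component when $l\leqslant n-1$) forces $l\geqslant n$. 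Your bound $d_j\leqslant 1$ on the old segment alone throws away exactly this coupling between the component and its exit edge.
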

\begin{proof}%[Proof of Lemma \ref{Lemma1}]
Let us suppose that the grid is in a secure position and that $\mathcal{V}$ claims $l$ dual edges and thereby creates a path of red dual edges $P$ that connects the top of the grid to the bottom of the grid.  Let $\{e_{1},\ldots,e_{l}\}$ be the dual edges in $P$ that $\mathcal{V}$ claimed in the order that they appear when one travels along $P$ from the bottom of the grid to the top of the grid.  We may assume that none of the dual edges in $\{e_{1},\ldots,e_{l}\}$ has both of its end-points in the same component (before $\mathcal{V}$ takes his turn), as such an edge would be superfluous with respect to creating a top-down dual crossing path.  We will show that $l \geqslant n$, which in turn proves the lemma as $n \geqslant q+1$.

For each dual edge $e_{i}$, let $v_{i}^{-}$ and $v_{i}^{+}$ be the end dual vertices of $e_{i}$ such that, when travelling along $P$ from the bottom of the grid to the top of the grid, one traverses $v_{i}^{-}$ before $v_{i}^{+}$.  For each such dual edge, let $x_{i} \in \mathbb{Z}$ and $y_{i}\in \mathbb{N}$ be such that $v_{i}^{+} = (x_{i}+0.5,y_{i} + 0.5)$.  We will show by induction on $i$ that $y_{i} \leqslant i$.  The statement is clear when $i = 1$ as either $e_{1}$ must be a dual edge that meets the bottom of the grid, and so $y_{1} = 1$, or $e_{1}$ meets a bottom component, say $C$.  As $C$ is secure, we must have that $e_1$ lies across $C$'s gate, say the edge $f = (x,1.5)$.  As such we have that $e_1 = f^{*}$ and so $y_{1} = 1$.

Now suppose that $y_{k} \leqslant k$ and consider the dual edge $e_{k+1}$.  If $e_{k}$ and $e_{k+1}$ have a dual vertex in common, then it is clear that $y_{k+1} \leqslant y_{k} + 1$ and so we are done.  If $e_{k}$ and $e_{k+1}$ do not share a dual vertex, then there must be some floating component $C$ such that both $e_{k}$ and $e_{k+1}$ are adjacent to a vertex in $C$, and the dual edges $e_{k}$ and $e_{k+1}$ each lie across some edge from  $C$'s bracket $B$.  If $B$ is a bracket of Type $1$ or Type $2$ with corners $(x,y)$ and $(x+2,y+2)$, then we must have that $y_{k} \geqslant y$ and $y_{k+1} \leqslant y + 1$.  Similarly, if $B$ is a bracket of Type $3^{+}$ with corner vertices $(x,y)$ and $(x+1,y+1)$, then we must have that $y_{k} \geqslant y-1$ and $y_{k+1} \leqslant y$.  Finally,  if $B$ is a bracket of Type $3^{-}$ with corner vertices $(x,y)$ and $(x+1,y+1)$, then we must have that $y_{k} \geqslant y$ and $y_{k+1} \leqslant y+1$. In all cases we have shown that $y_{k+1} \leqslant y_{k} + 1$ and so we have proved our inductive claim.

We now show that we have $l \geqslant n$.  Suppose for contradiction that $l \leqslant n-1$, and consider the dual edge $e_{l}$.  We must have that $e_{l}$ meets either the top of the grid or  a top component.  As we showed above that $y_{l} \leqslant l  \leqslant n-1 $, we can rule out the first of these two possibilities:  $e_{l}$ cannot meet the top of the grid. Thus $e_{l}$ meets a top component.  Moreover, as this top component is secure, $e_{l}$ is a horizontal dual edge, $y_{l} = n-1$, and $v_{l}^{-}$ lies to the right of $v_{l}^{+}$.  Note, we cannot have $v_{l}^{-} = v_{l-1}^{+}$ as this would contradict the fact that $y_{l-1} \leqslant l-1 \leqslant n-2$.  Thus the vertex $v_{l}^{-}$ must be part of some floating component $C$, and the dual edge $e_{l}$ must lie across $C$'s bracket $B$.  However, the only way this would be possible is if $B$ were a bracket of Type $3^{+}$ with corner vertices $(x_{l}+1, n)$ and $(x_{l}+2,n+1)$.  If this were the case, we must have that the dual vertex $(x_{l} + 1.5, n + 0.5)$ is also part of $C$, as it is an interior dual vertex of the bracket $B$.  However this would tell us that $C$ is a top component and not a floating component.  Therefore no such component $C$ can exist, which gives the desired contradiction. 
\end{proof}

Lemma~\ref{Lemma1} tells us that if the grid is secure at the start of $\mathcal{V}$'s turn, then it is not possible for $\mathcal{V}$ to win in a single turn.  We now show that if the grid is secure at the start of $\mathcal{V}$'s turn, then, after $\mathcal{V}$ has claimed $r \leqslant q$ edges, $\mathcal{H}$ can return the grid to a secure state by placing at most $2r$ blue edges.  This immediately implies that $\mathcal{H}$ wins the $q$-double-response game on $S_{\infty \times n}$, whenever $n \geqslant q+1$, and thus proves Theorem~\ref{theorem: double-response}.

 To show that $\mathcal{H}$ can always return the grid to a secure state at the end of each of her turns, we introduce the \textit{secure game}.  The main idea behind this game is that it allows $\mathcal{H}$ to respond to $\mathcal{V}$'s edges one at a time.

\begin{definition}[Secure game]
The secure game is played by $\mathcal{H}$ and $\mathcal{V}$ on the graph $S_{\infty \times n}$. At any point in the game, some edges will be unclaimed, some red (claimed by $\mathcal{V}$), some blue (claimed by $\mathcal{H}$), and some will have become blue double-edges (claimed twice by $\mathcal{H}$).

On each of his turns, $\mathcal{V}$ claims an edge  and colours it red. The edge he claims may be unclaimed, or may already be a blue edge or a blue double-edge, in which case $\mathcal{V}$ breaks these blue edges and replaces them by a red single edge.  However $\mathcal{V}$'s choice of an edge (regardless of whether it is an unclaimed edge, a blue edge or a blue double-edge) is subject to three restrictions:
%Regardless of whether the edge $\mathcal{V}$ wants to claim is an unclaimed edge, a blue edge or a blue double-edge, his choice of an edge is  subject to three restrictions:
%$
\begin{enumerate}[(a)]
	\item $\mathcal{V}$ is not allowed to claim an edge  if doing so would create a red dual cycle or a red dual arch;
	\item $\mathcal{V}$ is not allowed to claim an edge if doing so connects a top component to a bottom component;
	\item if $C$ is a floating component and $P$ is the path of blue edges that helps secure $C$, then $\mathcal{V}$ is not allowed to claim an edge from $P$ if doing so turns $C$ into either a top or a bottom component.
\end{enumerate}
%\noindent Note that these restrictions apply regardless of whether the edge $\mathcal{V}$ wants to claim is an unclaimed edge, a blue edge or a blue double-edge.

Once $\mathcal{V}$ has played an edge $e$, $\mathcal{H}$ responds by claiming $b+2$ edges and colouring them blue, where $b$ is the number of blue edges broken by $e$, counting multiplicity. Thus $\mathcal{H}$ may respond with $2$, $3$ or $4$ edges.

At any stage of this game, we say the grid is \emph{secure} if two conditions are met.  The first condition is that the grid is in a secure position as far as the $q$-double-response game is concerned (treating all blue double-edges as blue simple edges for that purpose).  The second condition is that if  $C$ and $C'$ are distinct red components and $P$ and $P'$ are the blue paths securing them, then every edge in the intersection $P\cap P'$ is a blue double-edge.
% there exists two distinct components $C$ and $C'$ such that the blue paths securing each component share edges, then these edges are in fact blue double-edges.

We say $\mathcal{H}$ wins the game if she can ensure that at the end of each of her turns the game is in a secure position (i.e. the board remains secure however long we play). Otherwise, we say $\mathcal{V}$ wins.
%The horizontal player $\mathcal{H}$'s aim in the secure game is to ensure that after each of her turns the grid is in a secure position, while the vertical player $\mathcal{V}$ wins if ever the grid fails to be secure after $\mathcal{H}$ has played.  We say that $\mathcal{H}$ wins the game if she can prolong the game indefinitely without $\mathcal{V}$ winning.  
\end{definition}

\begin{lemma}\label{Lemma2}
For any $n\geqslant 2$, the horizontal player $\mathcal{H}$ has a winning strategy for the secure game on the grid $S_{\infty \times n}$.
\end{lemma}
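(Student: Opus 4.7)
The plan is to prove the lemma by induction on the number of turns, maintaining the invariant that the grid is secure at the end of each of $\mathcal{H}$'s turns. The base case is immediate: before any moves are played the grid contains no red dual edges and hence no components, so all conditions in the definition of \emph{secure} are vacuously satisfied. For the inductive step, suppose the grid is secure at the start of $\mathcal{V}$'s turn and $\mathcal{V}$ claims an edge $e$, breaking $b \in \{0,1,2\}$ blue edges (counted with multiplicity); $\mathcal{H}$ then has $b+2$ response moves with which to restore the invariant.

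First, $\mathcal{H}$ spends $b$ moves reclaiming the blue edges broken by $e$. Restriction (c) guarantees that any such broken edge either belonged to the securing path of a floating component that is still floating after $\mathcal{V}$'s move, or to the securing path of a top or bottom component whose gate and supporting path must simply be preserved. Reclaiming these edges (and promoting them to blue double-edges whenever they coincide with the securing path of a second component, as required by the second clause in the definition of a secure grid) restores every securing path that was affected by $\mathcal{V}$'s move. It remains to show that $\mathcal{H}$ can use her two remaining moves to absorb the new red dual edge $e^*$ into the securing structure.

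The core case analysis divides according to how $e^*$ meets the existing red subgraph. If neither endpoint of $e^*$ lies in an existing component, then $e^*$ creates a new component: if this component is floating, $\mathcal{H}$ places a bracket of the appropriate type (Type $1$, $2$, $3^{+}$, or $3^{-}$) around $e^*$ using two blue edges; if it is a top or bottom component, she plays a suitable gate together with its single supporting blue edge. If $e^*$ extends an existing component $C$ by one dual vertex $v$, $\mathcal{H}$ uses her two moves to reshape $C$'s bracket or gate so that $v$ becomes an interior dual vertex of the new securing structure, possibly converting between bracket types along the way. If $e^*$ merges two distinct components $C_{1}$ and $C_{2}$ --- which by restriction (b) cannot be one top and one bottom component --- $\mathcal{H}$ uses two moves to fuse the securing structures $P_{1}$ and $P_{2}$ of $C_{1}$ and $C_{2}$ into a single securing path for $C_{1}\cup C_{2}\cup\{e^*\}$, with edges of $P_{1}\cap P_{2}$ automatically becoming blue double-edges by the first step. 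The underlying reason that two moves always suffice is precisely the isoperimetric bound of Lemma~\ref{lemma: isoperimetric lemma}: adding one dual edge to a finite connected subgraph of $\mathbb{Z}^{2}$ lengthens its external boundary cycle by at most two edges.

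The main obstacle is the sheer volume of subcases generated by the interaction of bracket type (four choices), the orientation of $e^*$ (horizontal or vertical), its position relative to the corners and interior dual vertices of existing brackets, and the top/bottom/floating status of the affected components. The reflection symmetry recorded in Remark~\ref{remark: bracket symmetry} and the top-bottom symmetry of the grid collapse many of these cases, but one must still verify in each remaining configuration that $\mathcal{H}$'s two update edges can be chosen so that the resulting structure is itself a valid bracket, gate, or securing path, does not interfere with the securing structures of neighbouring components, and maintains the double-edge invariant on any overlapping paths. The most delicate subcases arise when a floating component is being extended toward the top or bottom of the grid, where a bracket must be converted into a gate together with a supporting blue path, and when $\mathcal{V}$'s move simultaneously merges two components whose brackets point in opposing directions, forcing $\mathcal{H}$ to pick her two edges so that the fused enclosure is itself a secure structure of the prescribed form.
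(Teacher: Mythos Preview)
Your proposal contains a genuine error at the very first step of the inductive argument. You write that ``$\mathcal{H}$ spends $b$ moves reclaiming the blue edges broken by $e$''. But the blue edges broken by $e$ are precisely the edge $e^{*}$ itself (with multiplicity): in the secure game, $\mathcal{V}$'s move replaces the blue edge (or blue double-edge) at that location by a \emph{red} edge. That edge is now red and $\mathcal{H}$ cannot reclaim it. Consequently your plan of ``restore the paths first, then use two remaining moves to absorb $e^{*}$'' cannot get off the ground: the securing path $P$ is genuinely severed at $e^{*}$ and must be rerouted, not restored.

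The paper's proof handles the broken-edge cases quite differently. When $e^{*}$ lies in a single securing path $P$ (so $b=1$ and $\mathcal{H}$ has three moves), she uses all three to claim the three other edges $g_{1},g_{2},g_{3}$ of the unit square surrounding the new dual vertex $v_{2}$, thereby detouring $P$ around $v_{2}$. When $e^{*}$ lies in two securing paths $P_{1}\cap P_{2}$ (so it was a blue double-edge, $b=2$, and $\mathcal{H}$ has four moves), she spends all four claiming the bracket $B_{1}$ of one component, which converts that bracket into part of a blue path and lets $B_{2}$ serve as the bracket for the merged component. In neither case are the extra moves used to reclaim anything; they are used to reroute or to absorb a bracket. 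Your sketch also slightly mischaracterises brackets: a bracket consists of four \emph{unclaimed} (non-red) edges, not blue edges, so ``placing a bracket using two blue edges'' conflates the blue path $P$ with the bracket $B$. The correct picture in Case~1 is that $\mathcal{H}$'s two moves create a short blue path $P$, while the four remaining boundary edges of the new component---which are automatically non-red since $v_{1},v_{2}$ were isolated---form the bracket.
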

\begin{proof}%[Proof of Lemma \ref{Lemma2}]
We will show how $\mathcal{H}$ can win the secure game by supposing the grid is in a secure position, and describing how $\mathcal{H}$ should respond to any dual edge that the vertical player $\mathcal{V}$ claims.  Suppose that $\mathcal{V}$ has played his single red dual edge $e=\{v_{1},v_{2}\}$.  We split into a number of different cases, determined by whether or not the dual vertices $v_1$ and $v_2$ are part of pre-existing components.  Some of these cases are then split into further sub-cases depending on whether or not $e$ lies across an existing blue edge or a bracket.

\begin{case}\label{Case1}
Before $e$ is played, neither of the dual vertices $v_{1}$ or $v_{2}$ are part of a component.
\end{case}
If one of $v_{1}$ or $v_{2}$ is a bottommost dual vertex, then, without loss of generality, we can write $v_{1} = (x+0.5,0.5)$ and $v_{2} = (x+0.5,1.5)$ for some $x \in \mathbb{Z}$.  In this case we know that the three edges $(x,1.5)$,  $(x+0.5,2)$ and $(x+1,1.5)$ are not red (as otherwise $v_{2}$ would be part of some component before $e$ was played).  The horizontal player $\mathcal{H}$ plays the edges $(x,1.5)$ and $(x+0.5,2)$.  The grid is still secure as the new component that $\mathcal{V}$ created is a bottom component and is secured by the blue path $P = \{(x,1.5),(x+0.5,2)\}$ and the gate $G = \{(x+1,1.5)\}$.

Similarly, if one of the $v_{i}$ is a topmost dual vertex, then, without loss of generality, we can write $v_{1} = (x+0.5,n + 0.5)$ and $v_{2} = (x+0.5,n - 0.5)$ for some $x\in \mathbb{Z}$.  The edges $(x,n- 0.5), (x+0.5,n-1)$ and $(x+1,n - 0.5)$ are not red (as otherwise $v_{2}$ would be part of some component before $e$ was played).  The horizontal player $\mathcal{H}$ plays the edges $(x,n-0.5)$ and $(x+0.5,n-1)$.  The grid is still secure as the new component that $\mathcal{V}$ created is a top component and is secured by the blue path $P = \{(x,n-0.5),(x+0.5,n-1)\}$ and the gate $G = \{(x+1,n-0.5)\}$.

If neither of the dual vertices $v_{1}$ or $v_{2}$ is a bottom or topmost dual vertex, then the dual edge $e=\{v_1,v_2\}$ forms a new floating component.  If $e$ is a vertical dual edge, say $e = (x+0.5,y)^{*}$, then none of the six following edges are red (as otherwise one of $v_{1}$ or $v_{2}$ would have been part of a component before $e$ was played):
%\begin{eqnarray}
%&
$(x,y+0.5)$, $(x+0.5,y+1)$, $(x+1,y+0.5)$, $(x+1,y-0.5)$, $(x+0.5,y-1)$, $(x,y-0.5)$.
The horizontal player $\mathcal{H}$ now claims the edges $(x,y+0.5)$ and $(x+0.5,y+1)$ and colours them blue.  The grid is now secure as the new component created by $\mathcal{V}$ is a floating component secured by the blue path $P = \{(x,y+0.5),(x+0.5,y+1)\}$ and the bracket $B$ of Type $3^{+}$ with corner vertices $(x,y)$ and $(x+1,y+1)$.

Similarly, if $e$ is a horizontal dual edge, say $e = (x,y+0.5)^{*}$, then none of the following edges are red (as otherwise one of $v_{1}$ or $v_{2}$ would have been part of a component before $e$ was played): $(x-1,y+0.5)$, $(x-0.5,y+1)$, $(x+0.5,y+1)$, $(x+1,y+0.5)$, $(x+0.5,y)$, $(x-0.5,y)$. The horizontal player now claims the edges $(x-1,y+0.5)$ and $(x-0.5,y+1)$ and colours them blue.  The grid is now secure as the new component created by $\mathcal{V}$ is a floating component secured by the blue path $P = \{(x-1,y+0.5),(x-0.5,y+1)\}$ and the bracket $B$ of Type $3^{-}$ with corner vertices $(x-1,y)$ and $(x,y+1)$.

We have now dealt with all possibilities in Case \ref{Case1}.

\begin{case}\label{Case2}
Before $e$ is played, the vertex $v_{1}$ is part of some component $C$ while the vertex $v_{2}$ is not part of any component.
\end{case}

Let $P$ be the path of blue edges that secures the component $C$.  If $C$ is a floating component, let $B$ be the bracket that, together with $P$, secures $C$.  If instead $C$ is a bottom or top component, let $G$ be the be the gate that helps secure $C$.  If $v_{2}$ lies in the interior of the cycle formed by $P \cup B$, or the arch formed by $P \cup G$, then the grid is still secure after $e$ has been played, and so $\mathcal{H}$ may play her edges arbitrarily.  If $v_{2}$ is not in the interior of the cycle formed by $P \cup B$ or the arch formed by $P \cup G$, then we note that the edge $e$ must lie across either $P$, $B$ or $G$, as if this were not the case, then $v_{2}$ would be a vertex that contradicts the fact that $C$ was secure before $\mathcal{V}$'s move.

We first deal with the case that $e$ crosses an edge of $P$, say the edge $f \in P$.  We cannot have that $v_{2}$ is a top or bottommost dual vertex as $\mathcal{V}$ is not allowed to break a blue edge with a red edge that contains a top or bottommost vertex.  As such, there exist three edges, let us call them $g_{1},g_{2}$ and $g_{3}$, such that the set $\{f,g_{1},g_{2}, g_{3}\}$ forms a closed loop around the vertex $v_{2}$.  As $\mathcal{V}$ played a red edge that breaks a single blue edge we have that $\mathcal{H}$ is allowed to play $3$ edges in response.  As $v_{2}$ is not part of any component, we have that the three edges $\{g_{1},g_{2},g_{3}\}$ are not red edges, and so $\mathcal{H}$ claims all three of them.  These three edges, together with $P \setminus \{f\}$, form a path that, together with the bracket $B$, secures $C \cup \{v_{2}\}$.

Suppose next that $C$ is a bottom component, and that the dual edge $e$ lies across its gate $G$.  Then $e$ is of the form $e = (x,1.5)^{*}$, and the edges $(x+0.5,2)$ and $(x+1,1.5)$ are not red  (as otherwise $v_{2}$ would be part of some pre-existing component).  Thus the horizontal player $\mathcal{H}$ can claim these two edges and $C \cup \{v_{2}\}$ is now extra secure.  Similarly, if $C$ is a top component, and the dual edge $e$ lies across $G$ then, writing $e = (x,n- 0.5)^{*}$, we see that neither of the edges $(x+0.5,n-1)$ and $(x+1,n- 0.5)$ is red (as otherwise $v_{2}$ would be part of some pre-existing component).  Thus the horizontal player $\mathcal{H}$ can claim these two edges and $C \cup \{v_{2}\}$ is now extra secure. 

We now deal with the case where $C$ is a floating component and $e$ crosses an edge of its bracket $B$.  We divide here into sub-cases, depending on the type of the bracket $B$.  For each sub-case, there are some further sub-sub-cases to consider, depending on which edge of the bracket $B$ is crossed by $e$.

In all cases we will list the two edges $f_1$ and $f_2$ that constitute $\mathcal{H}$'s response, as well as a new bracket $B'$ or a new gate $G'$.  The blue path $P \cup \{f_{1},f_{2}\} \setminus \{e\}$ together with  $B'$ or $G'$ will then secure the new red component $C\cup\{v_2\}$. Since $v_2$ is not part of a pre-existing red component, it will follow that the two edges $f_1$ and $f_2$ are not red edges (so that $\mathcal{H}$ is free to claim them, or to turn them into blue double-edges if she had already claimed them in the past) and further that none of the edges in the new bracket $B'$ or gate $G'$ are red (so that $\mathcal{H}$'s move does indeed secure $C\cup\{v_2\}$, as claimed).

In our analysis, we will make use of Remark~\ref{remark: bracket symmetry} on the closure of the family of brackets under reflections swapping their corners, which will allow us to greatly reduce the number of cases we need to check.  Finally, before we dive into the case analysis, we would advise the reader to look at Figures \ref{Fig7} (Cases 2a, 2b), \ref{Fig8} (Case 2c), \ref{Fig9} (Cases 3a, 3b, 3c), and \ref{Fig10} (Cases 3d, 3e, 3f) in parallel with the proof, as the pictures there may greatly aid visualising the argument.

\medskip
\noindent
\textbf{Case 2a.}  The bracket $B$ is a bracket of Type $1$, with corner vertices $(x,y)$ and $(x+2,y+2)$ for some $x,y$.
\smallskip

Suppose $y = 1$.  If $e$ is the dual edge $(x+0.5,1)^{*}$, then $\mathcal{H}$ plays the two edges $(x+2,1.5)$ and $(x+2,2.5)$.  The new component $C \cup \{v_{2}\}$ is a bottom component that is extra secured by the path $P \cup \{(x+2,2.5)\}$ and the gate $G' = \{(x+2,1.5)\}$.  If $e$ is the dual edge $(x+1.5,1)^{*}$, then $\mathcal{H}$ plays the edges $(x+2,2.5)$ and $(x+0.5,1)$.  The new component $C \cup \{v_{2}\}$ is a bottom component that is secured by the path $P \cup \{(x+2,2.5),(x+0.5,1)\}$ and the gate $G' = \{(x+2,1.5)\}$.

Suppose instead $y \geqslant 2$. If $e$ is the dual edge $(x+0.5,y)^{*}$, then $\mathcal{H}$ plays the two edges $(x,y-0.5)$ and $(x+2,y+1.5)$.  The new bracket $B'$ is a bracket of Type $2$ with corner vertices $(x,y-1)$ and $(x+2,y+1)$. If $e$ is the dual edge $(x+1.5,y)^{*}$,  then $\mathcal{H}$ plays the two edges $(x+0.5,y)$ and $(x+2,y+1.5)$.  The new bracket $B'$ is a bracket of Type $3^{+}$ with corner vertices $(x+1,y)$ and $(x+2,y+1)$. Finally, if $e$ is the dual edge $(x+2,y+0.5)^{*}$ or the dual edge $(x+2,y+1.5)^{*}$, then 
%we first perform the reflection which switches the corners of $B$, determine our response from the cases $e = (x+0.5,y)^{*}$ or $(x+1.5,y)^{*}$ and then reflect back to , 
we consider the dual edge $e$ and the bracket $B$ under the reflection that switches the corners of $B$ and determine our response by that given in the cases  $e = (x+0.5,y)^{*}$ or $(x+1.5,y)^{*}$, reflected back.  By Remark~\ref{remark: bracket symmetry}, the new bracket $B'$ thus obtained is a valid bracket.
%As $B$ is fixed under the reflection, and the set of all brackets is also closed under this reflection, we end with the new bracket $B'$ being a valid bracket.
 \begin{figure}[ht]
 	\centering
 	\includegraphics[scale=1]{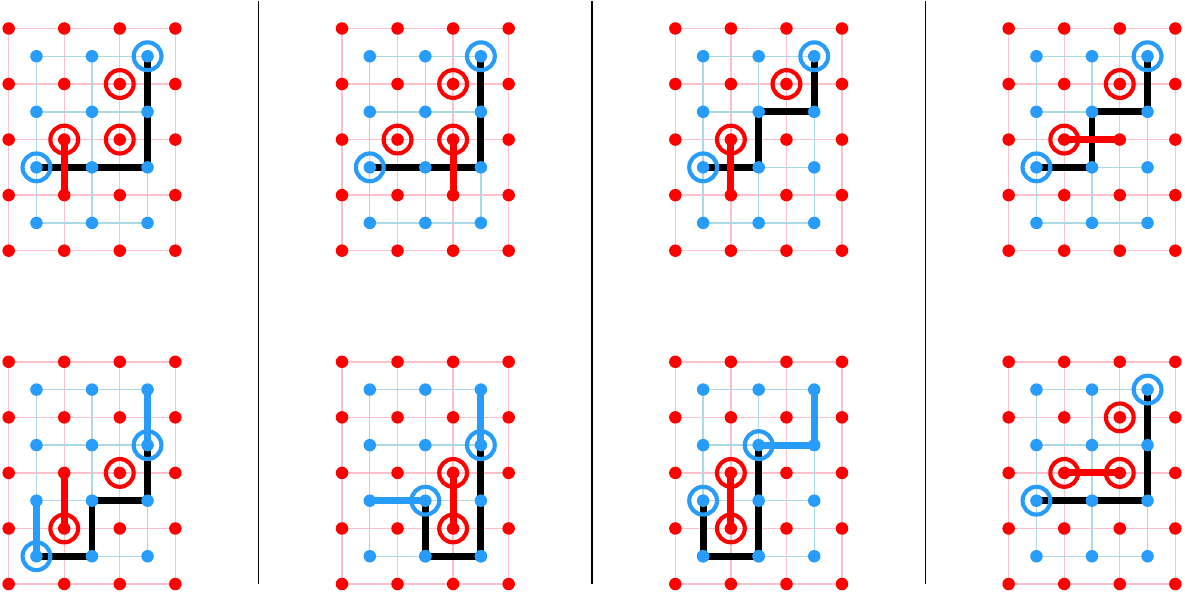}
 	\caption{The horizontal player $\mathcal{H}$'s response when the dual edge $e$ lies across the edge of a bracket of Type $1$ or $2$, as described in Cases $2$a and $2$b.  In each case the top picture shows the original bracket $B$, together with the dual edge $e$, while the picture below it shows the new bracket $B'$ and any newly claimed blue edges.}
 	\label{Fig7}
 \end{figure}

\medskip
\noindent
\textbf{Case 2b.}  The bracket $B$ is a bracket of Type $2$, with corner vertices $(x,y)$ and $(x+2,y+2)$ for some $x,y$.
\smallskip

As in Case 2a, as $B$ is fixed under the reflection that switches its corners, it is only necessary to deal with the cases $e = (x+0.5,y)^{*}$ and $e = (x+1,y+0.5)^{*}$.

Suppose first that $e$ is the dual edge $(x+0.5,y)^{*}$.  In this case $\mathcal{H}$ plays the two edges $(x+1.5,y+1)$ and $(x+2,y+1.5)$.  If $y = 1$, then the component $C \cup \{v_{2}\}$ is a bottom component secured by the path $P \cup \{x+2,2.5),(x+1.5,2)\}$ and the gate $G' = \{ (x+1,1.5)\}$.  If $y \geqslant 2$, then the new bracket $B'$ is a bracket of Type $3^{+}$ with corner vertices $(x,y)$ and $(x+1,y+1)$.

If instead $e$ is the dual edge $(x+1,y+0.5)^{*}$, then there is in fact no need for $\mathcal{H}$ to play any edges (so she may play them arbitrarily).  The new bracket $B'$ is a bracket of Type $1$ with corner vertices $(x,y)$ and $(x+2,y+2)$.

\medskip
\noindent
\textbf{Case 2c.}  The bracket $B$ is a bracket of Type $3^{+}$, with corner vertices $(x,y)$ and $(x+1,y+1)$ for some $x,y$.
\smallskip

Suppose first that $v_{2}$ is a bottommost dual vertex.  In this case, we have that $y = 2$ and $e = (x + 0.5,1)^{*}$.  The horizontal player $\mathcal{H}$ plays the edges $(x,1.5)$ and $(x+1,y+2.5)$.  The component $C \cup \{v_{2}\}$ is now a bottom component secured by the path $P \cup \{(x,1.5),(x+1,y+2.5)\}$ and the gate $G' = \{(x+1,y+1.5)\}$.

Suppose instead that $v_{2}$ is not a bottommost dual vertex.  If $e$ is the dual edge $(x,y-0.5)^{*}$, then $\mathcal{H}$ plays the two edges $(x-0.5,y)$ and $(x-1,y-0.5)$.  The new bracket $B'$ is a bracket of Type $1$ with corner vertices $(x-1,y-1)$ and $(x+1,y+1)$. If $e$ is the dual edge $(x+0.5,y-1)^{*}$, then $\mathcal{H}$ plays the two edges $(x,y-0.5)$ and $(x+1,y+0.5)$.  The new bracket $B'$ is a bracket of Type $3^{+}$ with corner vertices $(x,y-1)$ and $(x+1,y)$. If $e$ is the dual edge $(x+1,y-0.5)^{*}$, then $\mathcal{H}$ plays the two edges $(x,y-0.5)$ and $(x+1,y+0.5)$.  The new bracket $B'$ is a bracket of Type $3^{-}$ with corner vertices $(x,y-1)$ and $(x+1,y)$. Finally, if $e$ is the dual edge $(x+1,y+0.5)^{*}$, then $\mathcal{H}$ plays the two edges $(x,y-0.5)$ and $(x+1.5,y+1)$.  The new bracket $B'$ is a bracket of Type $2$ with corner vertices $(x,y-1)$ and $(x+2,y+1)$.

\begin{figure}[ht]
	\centering
	\includegraphics[scale=1]{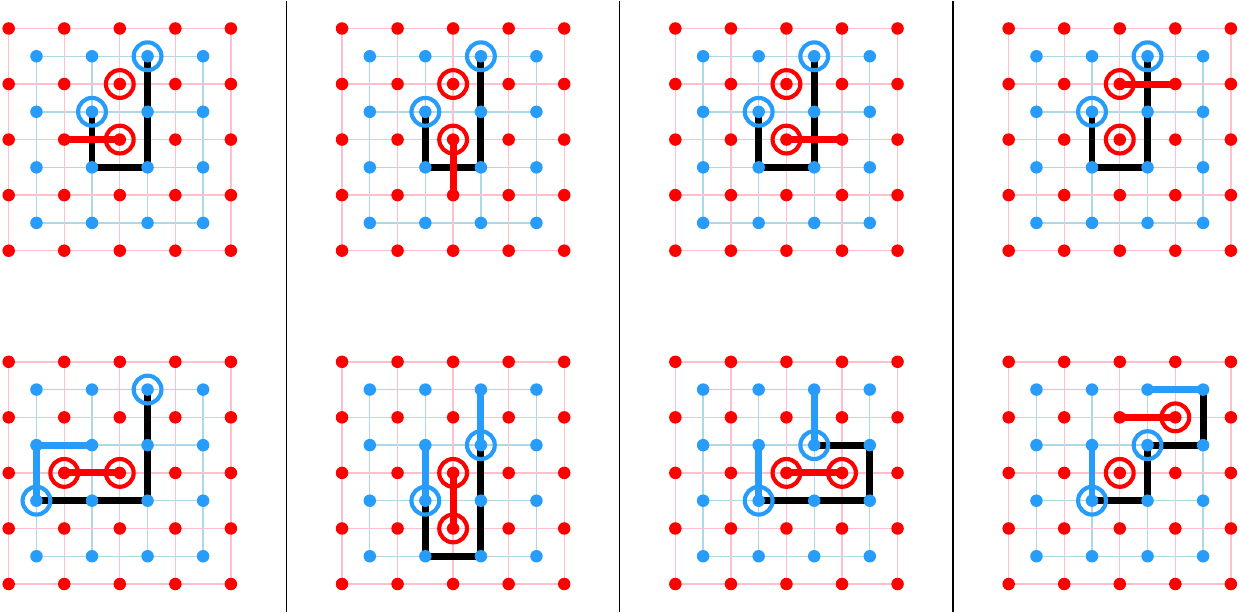}
	\caption{The horizontal player $\mathcal{H}$'s response when the dual edge $e$ lies across the edge of a bracket of Type $3^{+}$, as described in Case $2$c.  In each case the top picture shows the original bracket $B$, together with the dual edge $e$, while the picture below it shows the new bracket $B'$ and any newly claimed blue edges.}
	\label{Fig8}
\end{figure}

\medskip
\noindent
\textbf{Case 2d.}  The bracket $B$ is a bracket of Type $3^{-}$, with corner vertices $(x,y)$ and $(x+1,y+1)$ for some $x,y$.
\smallskip

Suppose first that $v_{2}$ is a bottommost dual vertex.  In this case we have that $y = 1$ and $e = (x + 0.5,1)^{*}$ or $(x + 1.5,1)^{*}$.  If $e = (x + 0.5,1)^{*}$, then $\mathcal{H}$ plays the edges $(x+1.5,2)$ and $(x+2,1.5)$.  The component $C \cup \{v_{2}\}$ is now a bottom component that is extra secured by the path $P \cup \{(x+1.5,2)\}$ and the gate $G = \{x+2,1.5\}$.  If $e = (x + 1.5,1)^{*}$, the horizontal player $\mathcal{H}$ plays the edges $(x + 0.5,1)$ and $(x + 1.5,2)$.  The component $C \cup \{v_{2}\}$ is now a bottom component that is secured by the path $P \cup \{(x + 0.5,1),(x + 1.5,2)\}$ and the gate $G' = \{x+2,1.5\}$.

We next suppose that $v_{2}$ is a topmost dual vertex.  We must have that $y = n-1$ and $e = (x+1.5,n)^{*}$.  The horizontal player $\mathcal{H}$ plays the edges $(x+0.5,n-1)$ and $(x+1.5,n-1)$.  The component $C \cup \{v_{2}\}$ is now a top component secured by the path $P \cup \{(x+0.5,n-1),(x+1.5,n-1)\}$ and the gate $G' = \{x+2,n-0.5\}$.

Finally, suppose that $v_{2}$ is not a bottommost or topmost vertex.  Then we consider the dual edge $e$ and the bracket $B$ under the reflection that switches the corners of $B$ and determine our response using Case $2$c (since $B$ is mapped to a bracket of Type $3^{+}$), reflected back. By Remark~\ref{remark: bracket symmetry}, the new bracket $B'$ thus obtained is a valid bracket.

\begin{case}\label{Case3}
Before $e$ is played, the vertex $v_{1}$ is part of some component $C_{1}$ while the vertex $v_{2}$ is part of some component $C_{2}$.
\end{case}

We first note that $C_{1}$ and $C_{2}$ cannot be the same component, as $\mathcal{V}$ cannot claim any dual edges that would create a closed cycle  of red dual edges (violating restriction (a) from the secure game definition).  For each $i = 1,2$, let $P_{i}$ and $B_{i}$ (or $G_{i}$) be the respective path and bracket (or gate) that makes $C_{i}$ a secure component.

We first deal with the case where $e^{*}\in P_{1}\cap P_{2}$.  Observe that $C_{1}$ and $C_2$ must then both be floating components. Indeed, suppose one of the components, say $C_{1}$, were a bottom component. Then $C_{2}$ cannot be a top or a floating component (else $\mathcal{V}$'s move would violate restriction (b) or (c)) and  further, $C_{2}$ cannot be a bottom component (else $\mathcal{V}$'s move would create a red dual arch, violating restriction (a)), a contradiction. Thus neither of $C_{1}$, $C_2$ can be a bottom component, and in a similar way neither of them can be a top component.

%we first note that $C_{1}$ cannot be a bottom component.  Indeed, if $C_{1}$ is a bottom component, then $C_{2}$ cannot also be a bottom component as playing $e$ would create a dual arch of red dual edges, $C_{2}$ cannot be a floating component because $e$ lies across $P_{1}$ and claiming $e$ would turn $C_{2}$ into a bottom component, and $C_{2}$ cannot be a top component as $\mathcal{V}$ is not allowed to claim a dual edge that joins a top component to a bottom component.  In a similar fashion we have that $C_{1}$ cannot be a top component and so $C_{1}$ must be a floating component.  %Furthermore, by symmetry, we must have that $C_{2}$ is also a floating component.  
As $e^{*}\in P_{1}\cap P_{2}$ and the board was secure before $\mathcal{V}$'s turn, $e^{*}$ must have been a blue double-edge, and so $\mathcal{H}$ has $4$ edges to respond with. Thus $\mathcal{H}$ plays all the edges in $B_{1}$, of which there are at most $4$.  It is easy to see that the new component $C_{1} \cup C_{2}$ is secured by some path contained in the set of edges $(P_{1} \cup P_{2}\cup B_{1}) \setminus \{ e^{*}\}$ and the bracket $B_{2}$.

We next deal with the case where $e^{*}$ is an edge that lies in both $P_{1}$ and $B_{2}$ (or $G_{2}$).  By the same arguments as above (based on restrictions (a), (b) and (c)), we must have that $C_{2}$ is a floating component.  The horizontal player $\mathcal{H}$ has $3$ edges to respond with, and so she plays the three edges in $B_{2} \setminus \{e^{*}\}$.  Once again, it is easy to see that the new component $C_{1} \cup C_{2}$ is secured by some path contained in the set of edges $(P_{1} \cup P_{2}\cup B_{1}) \setminus \{ e^{*} \}$ and the bracket $B_{1}$ (or gate $G_{1}$).

Finally we need to deal with the case where $e^{*}$ is an edge that lies in both $B_{1}$ (or $G_{1}$) and $B_{2}$ (or $G_{2}$).  We first note that it is not possible for either $C_{1}$ or $C_{2}$ to be top components.  Indeed, if say $C_{1}$ was a top component, then $C_{2}$ must be a floating component (by restrictions (a) and (b)), yet there is no possible bracket for $C_{2}$ that can have an edge in common with $G_{1}$.  Next, let us suppose that $C_{1}$ is a bottom component whose gate $G_{1}$ consists of the edge $(x,1.5)$.  By restrictions (a) and (b), $C_{2}$ is a floating component and $B_{2}$ must be a bracket of type $3^{+}$ with corners $(x,2)$ and $(x+1,3)$ (no other bracket type is compatible with $G_{1}$).  The horizontal player $\mathcal{H}$ then plays the edges $(x+1,1.5)$ and $(x+1,2.5)$.  The new component $C_{1} \cup C_{2}$ is a bottom component extra-secured by a path contained in the set of edges $P_{1} \cup P_{2} \cup \{(x+1,2.5)\}$ and the gate $\{(x+1,1.5)\}$.

Finally, if $C_{1}$ and $C_{2}$ are both floating components, then we have to split into sub-cases, depending on the bracket-type of $B_{1}$ and $B_{2}$, and on which edge they share.  Note to begin with that if $B_1$, $B_2$ are both of Type $1$ or $2$ and have an edge in common, then they must share an interior vertex, contradicting the fact that $C_1$ and $C_2$ are distinct components. Thus without loss of generality, we may assume that $B_{1}$ is a bracket of Type $3^{+}$ or $3^{-}$.  We deal below with the case where $B_{1}$ is a bracket of Type $3^{+}$ with corner vertices $(x,y)$ and $(x+1,y+1)$.  The case where $B_{1}$ is a bracket of Type $3^{-}$ will then follow by considering the reflection switching $B_{1}$'s two corner vertices and making use of Remark~\ref{remark: bracket symmetry}.

In each of the following sub-cases we will list the set $P_3$ of two (or fewer) edges from $B_{1}\cup B_{2}\setminus\{e^*\}$ that $\mathcal{H}$ plays and the location and type of a new bracket $B$.   It is easy to check then that there is a blue path $P$ contained within the edges of $P_{1} \cup P_{2}\cup P_3$ such that $P$ and $B$ together secure the new component $C_{1} \cup C_{2}$.  In the sub-cases below, we cover all ways in which a bracket of Type $3^{+}$ and another bracket could share an edge. In each sub-case (except Case $3$g, which we deal with via a reflection and Remark~\ref{remark: bracket symmetry}), we let $e=(x, y-0.5)^*$ be the dual edge played by $\mathcal{V}$.
 \begin{figure}[ht]
 	\centering
 	\includegraphics[scale=1]{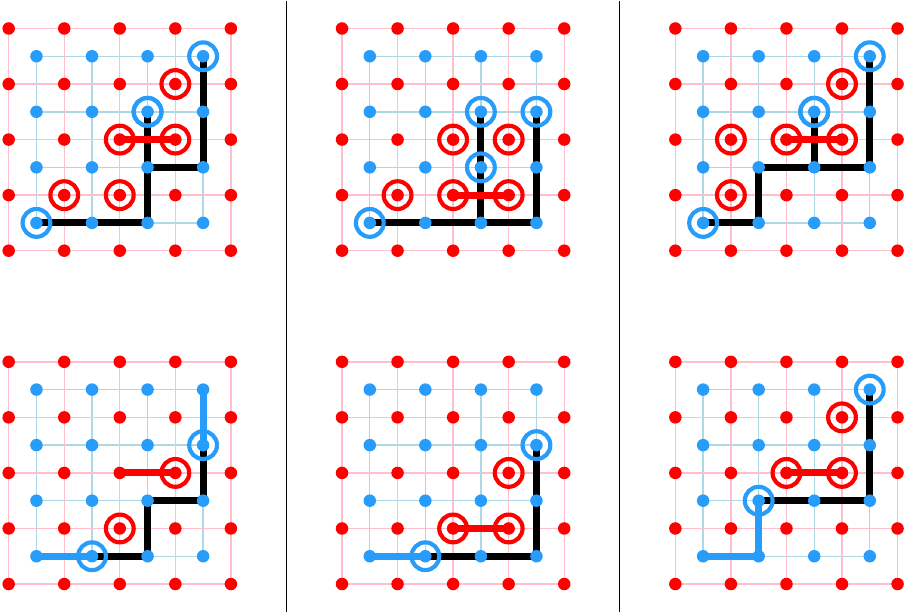}
 	\caption{The horizontal player $\mathcal{H}$'s response when the dual edge $e$ lies across  an edge of two brackets, $B_{1}$ and $B_{2}$, as described in Cases $3$a, $3$b and $3$c.  In each case the top picture shows the original brackets, together with the dual edge $e$, while the picture below it shows the new bracket $B$ and any newly claimed blue edges.}
 	\label{Fig9}
 \end{figure}

\medskip
\noindent
\textbf{Case 3a.}  The bracket $B_{2}$ is a bracket of Type $1$, with corner vertices $(x-2,y-2)$ and $(x,y)$.
\smallskip
In this case $\mathcal{H}$ plays the edges $(x-1.5,y-2)$ and $(x+1,y+0.5)$.  The bracket of $B$ is a bracket of Type $2$ with corner vertices $(x-1,y-2)$ and $(x+1,y)$.

\medskip
\noindent
\textbf{Case 3b.}  The bracket $B_{2}$ is a bracket of Type $1$, with corner vertices $(x-2,y-1)$ and $(x,y+1)$.
\smallskip
In this case $\mathcal{H}$ plays the edge $(x-1.5,y-1)$.  The bracket of $B$ is a bracket of Type $1$ with corner vertices $(x-1,y-1)$ and $(x+1,y+1)$.

\medskip
\noindent
\textbf{Case 3c.}  The bracket $B_{2}$ is a bracket of Type $2$, with corner vertices $(x-2,y-2)$ and $(x,y)$.
\smallskip
In this case $\mathcal{H}$ plays the edges $(x-1.5,y-2)$ and $(x-1,y-1.5)$.  The bracket of $B$ is a bracket of Type $1$ with corner vertices $(x-1,y-1)$ and $(x+1,y+1)$.
 \begin{figure}[ht]
 	\centering
 	\includegraphics[scale=1]{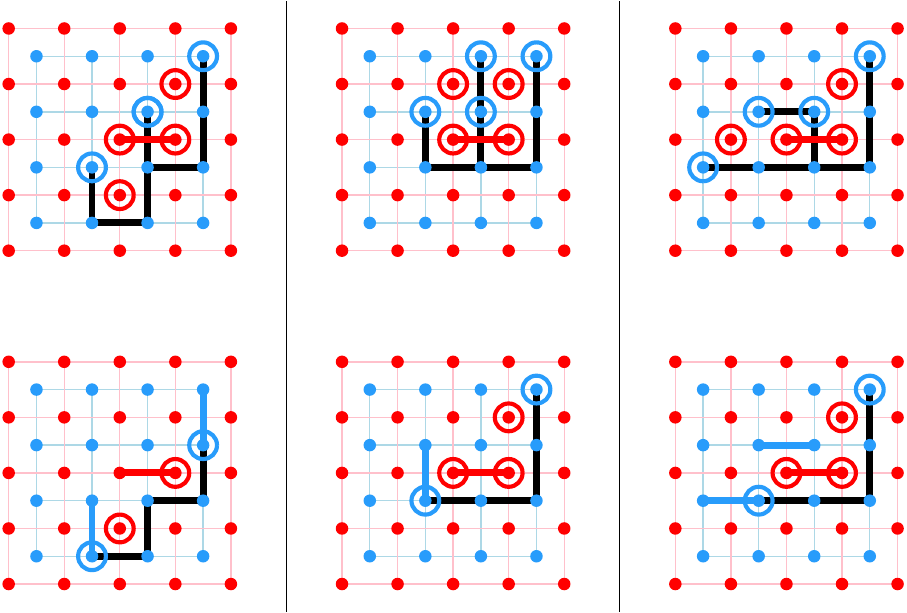}
 	\caption{The horizontal player $\mathcal{H}$'s response when the dual edge $e$ lies across  an edge of two brackets, $B_{1}$ and $B_{2}$, as described in Cases $3$d, $3$e and $3$f.  In each case the top picture shows the original brackets, together with the dual edge $e$, while the picture below it shows the new bracket $B$ and any newly claimed blue edges..}
 	\label{Fig10}
 \end{figure}
 
\medskip
\noindent
\textbf{Case 3d.}  The bracket $B_{2}$ is a bracket of Type $3^{+}$, with corner vertices $(x-1,y-1)$ and $(x,y)$.
\smallskip
In this case $\mathcal{H}$ plays the edges $(x-1,y-1.5)$ and $(x+1,y+0.5)$.  The bracket of $B$ is a bracket of Type $2$ with corner vertices $(x-1,y-2)$ and $(x+1,y)$.

\medskip
\noindent
\textbf{Case 3e.}  The bracket $B_{2}$ is a bracket of Type $3^{+}$, with corner vertices $(x-1,y)$ and $(x,y+1)$.
\smallskip
In this case $\mathcal{H}$ plays the edge $(x-1,y-0.5)$.  The bracket of $B$ is a bracket of Type $1$ with corner vertices $(x-1,y-1)$ and $(x+1,y+1)$.

\medskip
\noindent
\textbf{Case 3f.}  The bracket $B_{2}$ is a bracket of Type $3^{-}$, with corner vertices $(x-2,y-1)$ and $(x-1,y)$.
\smallskip
In this case $\mathcal{H}$ plays the edges $(x-1.5,y-1)$ and $(x-0.5,y)$.  The bracket of $B$ is a bracket of Type $1$ with corner vertices $(x-1,y-1)$ and $(x+1,y+1)$.

\medskip
\noindent
\textbf{Case 3g.}  The bracket $B_{2}$ is a bracket of Type $3^{-}$, with corner vertices $(x-1,y-2)$ and $(x,y-1)$, and $e$ is the dual edge $(x+0.5, y-1)^*$.
\smallskip
This case is in fact already dealt with, as the situation is identical to the previous case up to the reflection switching the corners of $B_{2}$.
\medskip

\noindent With Cases $3$a-g above, we have covered all possible cases and shown that $\mathcal{H}$ has a winning strategy for the secure game.
\end{proof}
\noindent With a winning strategy for the secure game in hand, we now show that in the  $q$-double-response game $\mathcal{H}$ can ensure that the grid is secure at the end of each of her turns. 

\begin{proof}[Proof of Theorem~\ref{theorem: double-response}]
Suppose the grid is secure and let $D$ be the set of dual edges claimed by $\mathcal{V}$ on his turn, where $|D| = r \leqslant q$.  The horizontal player $\mathcal{H}$ begins by picking a judicious ordering (to be specified later) of the elements of $D$ as $\{e_1, e_2, \ldots, e_r\}$ , and then proceeding as if she was playing the secure game, pretending that $\mathcal{V}$ plays $e_1$, $e_2$, $\ldots$, $e_r$ in that order and responding to each $e_i$ in turn.

For each $i = 1,\ldots, r$, let $L_{i}$ be the set of blue edges (including blue double-edges) that $\mathcal{H}$ has claimed after $i$ of her turns have occurred in this auxiliary secure game. We do not include in $L_{i}$ any blue edge broken by $\mathcal{V}$ in any of his first $i$ turns.

Recall that in the secure game, $\mathcal{H}$ responds to $\mathcal{V}$'s claim of the dual edge $e_i$ with $2$, $3$ or $4$ edges, depending on whether $e_i$ breaks $0$, $1$ or $2$ blue edges.
%if the $i$th dual edge of $D$ lies across an unclaimed edge, then $\mathcal{H}$ responds with two edges, if the $i$th dual edge of $D$ lies across a single blue edge, then $\mathcal{H}$ responds with three edges, while if the $i$th dual edge of $D$ lies across a blue double-edge, then $\mathcal{H}$ responds with four edges.  
 As such, we have that $|L_{i}| \leqslant 2i$ for all $i = 1,\ldots,r$.  Once $\mathcal{H}$ has gone through every dual edge of $D$ she has a set $L_{r}$ of at most $2r$ edges such that if $\mathcal{H}$ claims all the edges in $L_{r}$ in response to $\mathcal{V}$'s claim of $D$, then the grid is back to a secure position in the $q$-double/response game.  The only problem that could occur in this scenario is that during some turn of the auxiliary secure game, say turn $i$, the dual edge $e_i$ claimed by $\mathcal{V}$ breaks one of the restrictions (a)--(c) we imposed on the secure game.  We show below that this can be avoided by picking a judicious ordering on $D$.  Combined with Lemma \ref{Lemma2}, this will complete the proof Theorem~\ref{theorem: double-response}.

%Let us remind ourselves of the restrictions on the dual edges that $\mathcal{V}$ is allowed to claim in the secure game.  
The first restriction (a) on $\mathcal{V}$'s moves in the secure game is that $\mathcal{V}$ is not allowed to claim a dual edge as red if doing so would create a cycle or an arch of red dual edges.  As we have shown one can assume $\mathcal{V}$ never plays such an edge in the $q$-double-response game, this restriction will not be broken by any of the dual edges in $D$.

The second restriction (b) is that $\mathcal{V}$ may not claim a dual edge if doing so connects a top component to a bottom component.  We know by Lemma \ref{Lemma1} that if the grid is secure at the beginning of a turn of the $q$-double-response game, then $\mathcal{V}$ cannot win in that turn.  As such, there cannot be a dual edge in $D$ that connects a top component to a bottom component, and so this restriction is not broken either.

The third and final restriction (c) is that if $C$ is a floating component and $P$ is the path of blue edges that helps secure $C$, then $\mathcal{V}$ may not claim a red dual edge that breaks a blue edge from $P$ if claiming that dual edge would turn $C$ into either a bottom or top component. It is here that our judicious ordering of $D$ comes into play and ensures restriction (c) is respected.

We order the dual edges in $D$ as follows.  Due to restrictions (a) and (b), every top (respectively bottom) component is a rooted tree whose root is a topmost (respectively bottommost) dual vertex.  Let $D$ be ordered in any way such that, if $e, e' \in D$ are two dual edges that are part of the same bottom or top component $C$ and $e$ is strictly closer in graph distance in $C$ to the root of $C$ than $e'$, then $e$ appears before $e'$ in the ordering of $D$.  (Such an ordering clearly exists, by proceeding component by component.) We claim that ordering $D$ in this way guarantees that $\mathcal{V}$ never breaks the third restriction when we play the dual edges one by one.  Indeed, suppose there is a dual edge $e_i \in D$ such that before $e_i$ is played in the secure game, there exists a floating component $C$, secured by a path $P$ and a bracket $B$,  that becomes a bottom or top component once $e_i$ has been played.  Given our ordering on $D$, no other edge of $D$ meeting $C$ can have been played in the secure game before $e_i$. In particular, all the edges of $P$ were present before $\mathcal{V}$ played the dual edge-set $D$ in the $q$-double-response game and $\mathcal{H}$ introduced the auxiliary secure game. In particular $e_i$ cannot break an edge of $P$ (i.e $e$ must lie across $B$), and restriction (c) is respected.
\end{proof}
\begin{remark} As pointed out by a referee, our proof of Theorem~\ref{theorem: (2q,q)-game} implies that if  $p>2q$ then when Maker plays a $(p, q)$-crossing game on $S_{\infty \times (q+1)}$, she can not only ensure that Breaker never claims a set of edges corresponding to a top-bottom dual crossing path, but in addition, using her $p-2q$ extra edges at each turn, she can actually build an unbounded component. In this sense, $p>2q$ is a strong Maker win. On the other hand, this is not true if $p=2q$. Indeed, Breaker can follow the strategy of claiming on each of his turns the bottom $q$ edges of a top-bottom dual crossing path that lies at graph distance at least $2q$ from any previously claimed edge. Then should Maker fail to respond by playing all her $2q$ edges within distance at most $q$ from Breaker's edges, Breaker is able to complete a top-bottom dual crossing path on his next turn. Thus at $p=2q$ on $S_{\infty \times (q+1)}$, Maker has only just enough power to stymie Breaker, but is not able to actively construct a large structure for herself. \end{remark}

\section{Other graphs and other games}\label{section: other graphs and other games}
The crossing games we study in this paper may be viewed as special cases of the following generalisation of the Shannon switching game.
\begin{definition}[$(p,q)$-Shannon switching game]\label{definition: (p,q)-Shannon switching game}
A Shannon game-triple is a triple $(G,A,B)$, where $G$ is a finite multigraph (possibly with loops) and $A,B$ are sets of vertices from $G$. For $p,q\in \mathbb{N}$, the $(p,q)$-Shannon switching game on $(G,A,B)$ is played on the board $E(G)$ as follows.

Two players, Maker and Breaker, play in alternating turns. Maker plays first and in each of her turns claims $p$ (as-yet-unclaimed) edges of the board $E(G)$; Breaker in each of his turns answers by claiming $q$ (as-yet-unclaimed) edges of the board. Maker wins the game if she manages to claim all the edges of a path joining $A$ to $B$ (i.e. a path from some $a\in A$ to some $b\in B$ --- we call such a path an \emph{$A$--$B$ crossing path}). Otherwise, Breaker wins.	
\end{definition}
The $(p,q)$-crossing games we study in this paper are instances of the  $(p,q)$-Shannon switching game on $(G,A,B)$, where $G=S_{m \times n}$ and $A$ and $B$ are the sets of left-hand side and right-hand side vertices of $S_{m \times n}$ respectively. 

The generalised Shannon switching game satisfies some obvious monotonicity properties with regard to the board, which we record in Proposition~\ref{proposition: monotonicity for Maker/Breaker} below.  Given a multigraph $G$ and two distinct vertices $u,v \in V(G)$, let $m_{G}(u,v)$ denote the number of edges between $u$ and $v$, and let $m_{G}(v)$ denote the number of loops at $v$.  Let $G'$ be any multigraph obtained by taking $G$, deleting some vertex $v \in V(G)$, replacing it with two adjacent vertices $v_{1},v_{2}$, and then adding in edges adjacent to $v_{1}$ or $v_{2}$ until the relations
\begin{equation}
m_{G'}(v_{1},v_{2}) + m_{G'}(v_{1}) + m_{G'}(v_{2}) = m_{G}(v) + 1, \nonumber
\end{equation}
and
\begin{equation}
m_{G'}(u,v_{1})+m_{G'}(u,v_{2}) = m_{G}(u,v), \nonumber
\end{equation}
are satisfied for all $u \in V(G)\setminus \{v\}$.  We refer to this process as \textit{vertex-separation}.  Vertex separation may be thought of as an inverse operation to performing an \textit{edge-contraction} of the edge $\{v_{1},v_{2}\}$ in $G'$.

\begin{proposition}\label{proposition: monotonicity for Maker/Breaker}
Let $(G, A, B)$ and $(G',A',B')$ be Shannon game-triples. Suppose $(G',A',B')$ may be obtained from $(G,A,B)$ by a sequence of vertex-deletions, edge-deletions and vertex-separations.

Then the following hold:
\begin{enumerate}[(i)]
	\item if Maker has a winning strategy for the $(p,q)$-Shannon switching game on $(G',A',B')$, then Maker also has a winning strategy for the $(p,q)$-Shannon switching game on  $(G,A,B)$;
	% --- we say that $(G,A,B)$ Maker-dominates $(G, A',B')$ for the $(p,q)$-Shannon switching game;
	\item if Breaker has a winning strategy for the $(p,q)$-Shannon switching game on $(G,A,B)$, then Breaker also has a winning strategy for the $(p,q)$-Shannon switching game on  $(G',A',B')$. \quad 	\qedsymbol
	% --- we say that $(G',A',B')$ Breaker-dominates $(G, A,B)$ for the $(p,q)$-Shannon switching game;
	\end{enumerate}

\end{proposition}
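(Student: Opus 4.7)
My plan is to observe first that (i) and (ii) are contrapositives of each other. Indeed, the $(p,q)$-Shannon switching game terminates after at most $\lceil |E(G)|/(p+q)\rceil$ rounds on any finite multigraph $G$ and admits no draws, so it is determined: exactly one of Maker and Breaker has a winning strategy. Thus the implication ``Breaker wins $(G,A,B) \Longrightarrow$ Breaker wins $(G',A',B')$'' is logically equivalent to ``Maker wins $(G',A',B') \Longrightarrow$ Maker wins $(G,A,B)$'', so it suffices to establish (i). Moreover, by induction on the length of the sequence of operations, we may assume $(G',A',B')$ is obtained from $(G,A,B)$ by a single operation of one of the three types.

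For a single edge-deletion or vertex-deletion, we have $E(G') \subseteq E(G)$, $A' \subseteq A$, and $B' \subseteq B$. Fix a Maker winning strategy $\sigma'$ for $(G',A',B')$. Maker plays on $G$ by maintaining in parallel a ``virtual'' game on $G'$: on each of her turns she plays her $\sigma'$-moves in $G$ via the inclusion $E(G') \hookrightarrow E(G)$, and when the real Breaker plays in $G$ she records in the virtual game only those of his moves lying in $E(G')$, padding with arbitrary unclaimed edges of $E(G')$ to keep the virtual Breaker at $q$ moves per turn. Since $\sigma'$ wins against every Breaker strategy, at game's end she owns an $A'$--$B'$ crossing path in $G'$, which is simultaneously an $A$--$B$ crossing path in $G$.

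For a single vertex-separation, let $e_0 \in E(G')$ be an edge between the two new vertices $v_1,v_2$ whose contraction recovers $v \in V(G)$; the defining identities yield a natural bijection $\phi \colon E(G) \to E(G') \setminus \{e_0\}$. I plan to invoke the standard monotonicity fact for Maker--Breaker games that granting Maker an extra starting edge cannot hurt her: given the winning strategy $\sigma'$ for $(G',A',B')$ from the empty starting position, there is a winning strategy $\widetilde{\sigma}'$ for the game on $(G',A',B')$ in which Maker has pre-claimed $e_0$, and moreover $\widetilde{\sigma}'$ never itself claims $e_0$ (whenever $\sigma'$ would claim $e_0$, $\widetilde{\sigma}'$ substitutes an arbitrary unclaimed edge). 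Maker then plays on $G$ by simulating the enriched $G'$-game: her moves in $G$ are the $\phi^{-1}$-images of $\widetilde{\sigma}'$'s moves in $G'$, and the real Breaker's moves in $G$ are transported to moves in $E(G') \setminus \{e_0\}$ via $\phi$ (no padding needed, since Breaker in $G$ never plays $e_0$). The winning path $P'$ produced by $\widetilde{\sigma}'$ in $G'$ either avoids $e_0$, in which case $\phi^{-1}(P')$ is itself an $A$--$B$ crossing path of Maker's edges in $G$, or contains $e_0$, in which case $\phi^{-1}(P' \setminus \{e_0\})$ is an $A$--$B$ walk of Maker's edges in $G$ (contracting $e_0$ identifies its endpoints $v_1,v_2$ with $v$), and such a walk necessarily contains an $A$--$B$ crossing path.

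The main obstacle is the vertex-separation case: one must verify carefully that the bijection $\phi$ respects the ``path in $G'$ becomes walk in $G$'' correspondence, and that the conventions identifying $A' \cup B'$ with $A \cup B$ under the separation (in particular when $v$ itself belongs to $A$ or $B$, so that $v_1, v_2$ inherit membership in $A'$ or $B'$) are compatible with this transport. The Maker--Breaker monotonicity fact invoked above is standard folklore; an explicit proof can be given by a short strategy-stealing argument using the determinacy of the game.
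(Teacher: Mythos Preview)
The paper does not give a proof of this proposition: it is stated as an ``obvious monotonicity property'' and marked with a \qedsymbol\ inline, so there is nothing to compare against at the level of argument. Your proof is essentially correct and supplies the details the paper omits. The reduction of (ii) to (i) via determinacy, the induction on the length of the sequence, and the simulation arguments for each of the three operations are all sound and standard.

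One small slip: in the vertex-separation case when $e_0 \notin P'$, you assert that $\phi^{-1}(P')$ is itself an $A$--$B$ crossing path in $G$. This need not be literally true, since $P'$ may visit both $v_1$ and $v_2$ without using $e_0$ (via some third vertex $u$ adjacent to both), and then the contraction identifies $v_1,v_2$ with $v$, producing a walk with a repeated vertex rather than a path. Of course this is harmless, by exactly the ``a walk contains a path'' observation you already invoke in the $e_0 \in P'$ sub-case; you should simply say ``walk'' in both sub-cases. Your flagged concern about how $A',B'$ are defined under vertex-separation is legitimate---the paper is silent on this point---but under the natural convention (if $v\in A$ then $v_1,v_2\in A'$, and similarly for $B$) the transport of endpoints works as you describe.
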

%\begin{proof}[]
%\end{proof}
%\begin{flushright}
%\qedsymbol
%\end{flushright}

%Only slightly less obvious is the fact we have a monotonicity of sorts with regard to the powers $(p,q)$.
%\begin{proposition}
%	Let $(G,A,B)$ be a Shannon game-triple. Suppose Maker has a winning strategy for the $(p,q)$-Shannon switching game on $(G,A,B)$. Then for every $k\in \mathbb{N}$, Maker has a winning strategy for the $(kp, kq)$-Shannon switching game on $(G,A,B)$.
%\end{proposition}
%\begin{proof}
%\end{proof}
One can build a natural rooted tree structure on game-triples: we say that a generalised Shannon-switching game is played according to the \emph{Breaker First rule} (BF rule) if Breaker is allowed to make the first move instead of Maker. Then starting with a game triple $(G,A,B)$, we build a tree by letting $(G,A,B)$ be the root. The children of $(G,A,B)$ are all game-triples $(G',A',B')$ with BF rule obtained from $(G,A,B)$ by contracting $p$ edges. The children of a game triple $(G', A', B')$ with BF rule are then all game-triples $(G'',A'', B'')$ obtained by deleting $q$ edges. Repeating this operation, we build a game tree, whose leaves will consist of game-triples $(G_l, A_l,B_l)$ with $A_{l}\cap B_l\neq \emptyset$ (Maker's win) or with no $A_l$--$B_l$ paths (Breaker's win).
%; the former Maker-dominate the game $(K_1, \{1\}, \{1\})$ played on the one-vertex graph $K_1$, while the latter Breaker-dominate the game $((K_2)^c, \{1\}, \{2\})$
Ultimately, the Maker's win leaves are equivalent under optimal play to the game $(K_1, \{1\}, \{1\})$ played on the one-vertex graph $K_1$, while Breaker's leaves are equivalent to the game $((K_2)^c, \{1\}, \{2\})$ played on the two-vertex non-edge graph $K_2^c$.

By considering Proposition~\ref{proposition: monotonicity for Maker/Breaker} and the game-tree described above, our results on $(p,q)$-crossing games give winning strategies for a number of related Shannon-switching games. In a slightly different direction, our winning strategy for the $(2q-1, q)$-crossing game on sufficiently long strips is easily adapted to a much more general setting.  
\begin{theorem}\label{theorem: general strip theorem}
	Let $(G,A,B)$ be a Shannon game-triple.  Assume that 
	\begin{enumerate}[(i)]
		\item the game-board $E(G)$ may be split up into $m$ edge-disjoint strips $S_1, \ldots, S_m$;
		\item on each strip $S_i$ we have a local game $(G_i, A_i,B_i)$ such that if Breaker wins that game, then Breaker wins the global game on $(G,A,B)$;
		\item for each $i$, Breaker has a winning strategy for the $(p,q)$-Shannon switching game on $(G_i,A_i, B_i)$ under BF rules that ensures Breaker's victory in at most $T$ turns.
	\end{enumerate}
	Then provided
	\begin{equation}
		m \geqslant (s(p+1))^{T} + l(p+1)-1 \nonumber
		\end{equation}
	Breaker has a winning strategy for the $(l(p+1)-1 , lq)$-Shannon switching game on $(G,A,B)$, where $s = l(p+q+1)-1$.
\end{theorem}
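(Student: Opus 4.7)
The plan is to adapt the phase argument from the proof of Theorem~\ref{theorem: (2q-1,q)-game} by replacing the concrete $(1,1)$-crossing game on $S_{(n+1)\times n}$ with the abstract local $(p,q)$-Shannon switching game on each strip. First I would partition $E(G)$ into the $m$ strips $S_1, \ldots, S_m$ and observe that, since Maker plays first with power $l(p+1)-1$, her opening turn can invalidate at most $l(p+1)-1$ strips. This leaves at least $(s(p+1))^T$ strips in their pristine initial state, where under Breaker-first rules Breaker has a winning strategy in at most $T$ turns by hypothesis~(iii).

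I would then call a strip $k$-\emph{valid} if it holds exactly $kq$ Breaker-edges and the local game from its current position is a Breaker-win with Maker to move next, $k$-\emph{neutral} if it holds $kq$ Breaker-edges and the local game is a Breaker-win with Breaker to move next, and \emph{invalid} otherwise. A short application of Proposition~\ref{proposition: monotonicity for Maker/Breaker} shows that every $k$-valid strip is automatically $k$-neutral (extra Breaker-edges can only help Breaker). Breaker's strategy then proceeds in phases $k=0,1,\ldots,T$, with the inductive guarantee that phase $k$ begins on his turn with at least $(s(p+1))^{T-k}$ $k$-neutral strips. In each global turn of phase $k$ he picks $l$ such $k$-neutral strips and plays in each of them the $q$-edge move prescribed by his local winning strategy, thereby promoting all $l$ of them to $(k+1)$-valid.

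To track the tug-of-war I would introduce the potential
\[
R_t = (p+1)\,A_t + B_t,
\]
where $A_t$ counts current $(k+1)$-valid strips and $B_t$ counts strips that are $(k+1)$-neutral but not $(k+1)$-valid. Breaker's turn raises $R$ by $(p+1)l$. A case analysis of Maker's options, again leaning on monotonicity, shows that each of her edges drops $R$ by at most $1$: a full local turn of $p$ edges in an $A$-strip converts it to a $B$-strip, costing $R$ exactly $p$ in total; a stray edge in a $B$-strip or a $k$-neutral strip may invalidate it, costing $R$ at most $1$; and partial plays of fewer than $p$ edges in an $A$-strip leave the state $(k+1)$-valid and cost nothing. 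Hence every global turn gives $\Delta R \geqslant (p+1)l - (l(p+1)-1) = 1$. Breaker declares phase $k$ over once $R \geqslant (p+1)(s(p+1))^{T-k-1}$; the trivial bound $R \leqslant (p+1)(A+B)$ then supplies the required $(s(p+1))^{T-k-1}$ $(k+1)$-neutral strips for the next phase. A separate tally shows that $k$-neutral strips are lost at a rate of at most $s$ per turn, so in a phase of length at most $(p+1)(s(p+1))^{T-k-1}$ turns no more than $(s(p+1))^{T-k}$ of them disappear, matching the starting stock. After $T$ phases there is at least one $T$-neutral strip on which Breaker's prepared local strategy has run to completion, and hypothesis~(ii) converts this into a global Breaker-win.

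The technical heart of the generalisation, and the main obstacle, is the case analysis that bounds Maker's per-edge damage to $R$ by $1$. When $p=q=1$ a single Maker edge is already a full local turn, so conversion, invalidation, and waste coincide; but for general $(p,q)$ Maker may split her $l(p+1)-1$ global-turn edges arbitrarily across strips and may play incomplete local turns, and one must lean on Proposition~\ref{proposition: monotonicity for Maker/Breaker} to argue that fewer than $p$ Maker edges in a valid strip preserve its Breaker-winning-going-second property. Once this bookkeeping is verified, the remainder of the proof is a direct imitation of the $(2q-1,q)$-crossing game case.
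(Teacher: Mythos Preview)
Your overall architecture---phases indexed by $k$, a potential that Breaker pushes up by $l(p+1)$ per global turn and that Maker can only erode by $1$ per edge, termination once the potential reaches $(p+1)(s(p+1))^{T-k-1}$---matches the paper's proof exactly. The gap is in the step you yourself flag as the ``technical heart'': the claim that fewer than $p$ Maker edges in an $A$-strip leave it $(k+1)$-valid, so that each Maker edge costs $R$ at most $1$.

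This claim is false, and Proposition~\ref{proposition: monotonicity for Maker/Breaker} cannot rescue it: giving Maker extra edges is an edge-contraction, which helps \emph{Maker}, not Breaker. Concretely, take the strip to be a path of $p+1$ edges with $q=1$. From the initial position, Maker playing $p$ edges followed by Breaker's single edge is a Breaker win, so the strip is in your $A$-class. But after Maker plays just one edge, the remaining position is a path with $p$ unclaimed edges: if Maker now gets a fresh turn of $p$ edges she claims them all and wins, so the strip is no longer $(k+1)$-valid. In your potential $R=(p+1)A+B$, that single Maker edge costs $p$, not $0$; iterating, Maker can drain $R$ by up to $p(l(p+1)-1)$ per turn, which swamps Breaker's gain of $l(p+1)$.

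The paper fixes precisely this by refining your two-level valid/neutral dichotomy into $p+1$ levels: a strip is $(k,j)$-valid if Breaker still wins after Maker is granted $j$ free edges, and the potential is $R_t=\sum_{j=0}^{p}(j+1)R_t(j)$ over strips that are \emph{exactly} $(k+1,j)$-valid. Now one Maker edge in an exactly $(k+1,j)$-valid strip demotes it at worst to exactly $(k+1,j-1)$-valid, costing $R$ exactly $1$; this is the per-edge bound you need, and it does not hold with only two levels. Once you replace your $A/B$ bookkeeping by this graded one, the rest of your argument goes through verbatim and coincides with the paper's.
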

\begin{proof}%[Proof of Theorem \ref{theorem: general strip theorem}]
We generalise the arguments of Theorem \ref{theorem: (2q-1,q)-game} as follows.  At any point in the game, for $0 \leqslant j \leqslant p$ we say a strip $S_{i}$ is $(k,j)$\textit{-valid} if it contains exactly $kq$ red edges and is in a winning position for Breaker in the $(p,q)$-Shannon switching game on $(G_{i},A_{i},B_{i})$, with Maker getting to play any $j$ edges first before the game resumes with it being Breaker's turn to play.  If a strip is not $(k,j)$-valid for any $0 \leqslant j \leqslant p$, then we say that it is \textit{invalid}.  As Breaker has a winning strategy for the $(p,q)$-Shannon switching game on $(G_{i},A_{i},B_{i})$ under BF rules for each $i$, we have that each strip starts as $(0,0)$-valid.  Note that for $j' \geqslant j$, we have that any strip that is $(k,j')$-valid is also $(k,j)$-valid.  Thus, we say that a strip is \textit{exactly} $(k,j)$-valid if it is $(k,j)$-valid but not $(k,j+1)$-valid.  Note that if any strip $S_{i}$ is $(k,j)$-valid, then Breaker can play $q$ edges in $S_{i}$ and turn it in into a $(k+1, p)$-valid strip.  Indeed, as $S_{i}$ is $(k,j)$-valid, we know that it is also $(k,0)$-valid, and so it is in a winning position for Breaker in the $(p,q)$-Shannon switching game where it is Breaker's turn to play.  Breaker plays the $q$ edges that a winning strategy would prescribe, so that the strip is in a winning position for Breaker in the $(p,q)$-Shannon switching game, even though it is Maker's turn to play.  Thus Breaker has turned $S_{i}$ into a $(k+1,p)$-valid strip.

The game begins with Maker playing edges in up to $l(p+1)-1$ different strips, possibly making them invalid.  From here we split the game in to a number of different phases.  We will show by induction on $k$ that for each $k = 0,1,\ldots,T$, at the start of phase $k$ it will be Breaker's turn to play and the number of $(k,0)$-valid strips will be at least $(s(p+1))^{T-k}$.  As noted above, our inductive statement is clear when $k = 0$.  Suppose the statement is true for $k$.  On each turn in phase $k$, Breaker will choose $l$ different $(k,0)$-valid strips and play $q$ edges in each, turning them into $(k+1,p)$-valid strips.  Maker can now distribute their $l(p+1)-1$ among all the strips as she likes.  In the worst case scenario, each edge that Maker plays can either turn a strip that is exactly $(k+1,j)$-valid  into one that is exactly $(k+1,j-1)$-valid (when $j \geqslant 1$), or turn a $(k,0)$-valid or $(k+1,0)$-valid strip into an invalid one.  For each $j = 0,1,\ldots,p$, let $R_{t}(j)=R_t(j,k)$ be the number of exactly $(k+1,j)$-valid strips on the board after a total of $t$ combined edges in round $k$ have been played by the two players.  Moreover, let $R_{t} =R_t(k)$ be given by $R_t= \sum_{j = 0}^{p}(j+1)R_{t}(j)$.

We have that, if after $t$ edges have been played it is Breaker's turn to play and he plays $q$ edges, then $R_{t+q} = R_{t} + p+1$.  On the other hand, if after $t$ turns it is Maker's turn to play, then we have that $R_{t+1} \geqslant R_{t}-1$.  As Breaker plays a total of $lq$ edges while Maker plays a total of $l(p+1)-1$ edges on their respective turns for a combined total of $s$ edges, we have that $R_{rs} \geqslant r$ for all $r \in \mathbb{Z}_{\geqslant 0}$, at least until phase $k$ ends.  Breaker decides that phase $k$ has finished and phase $k+1$ has begun when $R_{rs} \geqslant (p+1)((p+1)s)^{T-k-1}$ for some $r \in \mathbb{Z}_{\geqslant 0}$.  Note that after Maker and Breaker have both completed their turns, the number of $(k,0)$-valid strips has decreased by at most $s$. Thus, as the number of $(k,0)$-valid strips at the start of phase $k$ is at least $(s(p+1))^{T-k}$, we know that the number of $(k,0)$-valid strips for Breaker to play in will not run out before $R_{rs} \geqslant (p+1)((p+1)s)^{T-k-1}$.  As $R_{rs} \geqslant (p+1)((p+1)s)^{T-k-1}$ we have that the number of $(k+1,0)$-valid strips at the start of phase $k+1$ is at least $((p+1)s)^{T-k-1}$ and it is Breaker's turn to play, as required.

To finish the proof, we note that at the start of phase $T$, there is at least one strip $S_{i}$ that is $(T,0)$-valid, and has been obtained by Breaker following a winning strategy on this strip for the $(p,q)$-Shannon switching game under the BF rules.  As Breaker can win the $(p,q)$-Shannon switching game on $S_{i}$ in at most $T$ moves, he has in fact won the local $(p,q)$-Shannon switching game on this strip $S_{i}$, and with it the global $(l(p+1)-1 , lq)$-Shannon switching game on $(G,A,B)$.
\end{proof}
\begin{remark}
The bound on $m$ given in Theorem~\ref{theorem: general strip theorem} is precisely the bound on the number of strips $m_0/(n+1)$ given in the proof of Theorem~\ref{theorem: (2q-1,q)-game} --- simply substitute in the values $p=q=1$ corresponding to the powers of the players in the local game on the strips and replace $l$ by $q$ in the bound for $m$ to recover the bound on $m_0/(n+1)$.
% for the $(2Q-1, Q)$-crossing game on $\Lambda_{m_0\times n}$. 
Note in particular that $q$ plays a different role in statements of the two theorems.
\end{remark}

Just as we have been able to generalise our winning Breaker strategy for the $(2q-1,q)$-crossing game to other Shannon game-triples, we believe our winning Maker strategy for the $(2q,q)$-crossing game on $S_{m \times (q+1)}$, as described in the proof of Theorem \ref{theorem: (2q,q)-game}, can be adapted to a number of other planar lattices. The key idea here is that if $\Gamma$ is a planar lattice where an isoperimetric  inequality similar to that of Lemma \ref{lemma: isoperimetric lemma} holds, then a Maker strategy similar to that in the proof of Theorem \ref{theorem: (2q,q)-game} should work in $\Gamma$.  More precisely, suppose $\Gamma$ is a planar lattice such that there exist a constant $a$ such that for all $k \in \mathbb{N}$ and for all connected components $C$ comprised of $k$ edges, the dual boundary cycle to $C$ consists of at most $ak + (a+2)$ edges\footnote{The quantity $ak + (a+2)$ comes from considering how $\mathcal{H}$'s strategy for the secure game, as described in the proof of Lemma \ref{Lemma2}, might adapt to other lattices.}.  In this case, we believe that there exists some constant $c$ such that Maker has a winning strategy for $(aq,q)$-crossing games on all arbitrarily long substrips of  $\Gamma$ of ``width'' at least $c$.  Of course, modifying our proof of Theorem~\ref{theorem: (2q,q)-game} to adapt it to a given planar lattice $\Gamma$ will require a careful definition of brackets and a large amount of case-checking (as is already the case in the proof of Theorem \ref{theorem: (2q,q)-game} itself), and so we make no attempt to do so here.

%In a similar fashion, we believe similar strategies shouldwork for the game of Hex. 
%We refer the reader to~\cite{HaywardRijswijck06} for an explicit description of Hex, as well as a number of results concerning the game. 
%While Hex is a crossing game played on the vertices of a planar lattice rather than the edges of a planar lattice, we do have an appropriate isoperimetric inequality for Hex: for any component $C$ consisting of $k$ vertices, the boundary component to $C$ consists of at most $2k + 4$ vertices.  We also have that in Hex, whenever the vertices of a strip are coloured with blue or red, there is either a left-right blue crossing path or a top-bottom red crossing path, but not both.  Thus, we believe that for any arbitrarily long strips (of width at least $q+1$), Maker should a have winning strategy for the $(2q,q)$ version of Hex.  Once again, proving this would likely require a large amount of case-checking and so we make no attempt to do so here.  

Given our original motivation from percolation theory, it would be natural to study Shannon-switching games on strips of any of the standard $2$-dimensional lattices studied in percolation. For instance, who wins crossing games on `rectangular-shaped' subgraphs of the triangular, honeycomb or Kagome lattices? More generally, this is a natural problem for any of the $11$ Archimedean lattices. 

In a different direction, one could consider \emph{site-percolation} rather than \emph{bond-percolation}, by playing variants of our generalised Shannon switching games where the players take turns claiming vertices rather than edges. One famous example of such a game is the game of Hex, where the players take turn claiming vertices on a subset of the triangular lattice, both trying to create certain crossing paths.  It is easy to prove that a vertex-analogue of Lemma~\ref{lemma: isoperimetric lemma} holds in this lattice ---  i.e. that any set of $k$ vertices inducing a connected component of the triangular lattice can be surrounded by a bounding cycle consisting of at most $2k+4$ vertices --- and we guess that our Maker winning strategy for the $(2q,q)$-crossing game should carry over without excessive technicalities (but not without care and case-checking).

\section{Concluding remarks}\label{section: concluding remarks}
There are many questions arising from our work. Outside of the special cases $(p,q)=(1,1)$, $p\geqslant 2q$ and $p\leqslant \frac{q}{2}$, the problem of determining which of Maker or Breaker has a winning strategy for the $(p,q)$-crossing game  on $S_{m\times n}$
%which $m\times n$ boards are Maker wins in $(p,q)$-crossing game (and, by exclusion, which ones are Breaker wins) 
is completely open for pairs $(m,n)$ that fall outside the scope of Theorem~\ref{theorem: (2q-1,q)-game} and Propositions~\ref{proposition: (p,p)-game}-\ref{proposition: (p, p+5r)-game}. Resolving this seems an obvious (but challenging) problem.
\begin{problem}\label{problem: (p,q)-crossing game}
	Given natural numbers $p,q,n\in \mathbb{N}$, determine the greatest $m \in \mathbb{N}$ such that Maker has a winning strategy for the $(p,q)$-crossing game on $S_{m \times n}$.
\end{problem}
As a special, easier case, one could consider the following problem of determining the optimal value of $m$ %for which the statement  Proposition~\ref{proposition: Maker gets an extra edge every M turns} holds:
in the variant of the $(1,1)$-crossing game where Maker gets an extra edge every $M$ turns.
\begin{question}\label{question: extra power}
	Suppose we play a variant of the $(1,1)$-crossing game where every $M$ turns Maker gets to claim an extra edge. Given $n, M\in \mathbb{N}$, what is the greatest $m$ such that Maker has a winning strategy when playing on $S_{m \times n}$?
\end{question}
It is not hard to show Maker can win in this variant for some $m=n+ \Omega(\log n)$, and it would be very interesting to determine whether she has a winning strategy for $m=\lfloor (1+\varepsilon)n\rfloor$ for some constant $\varepsilon=\varepsilon(M)>0$.

In a similar spirit, setting one's sights slightly lower than Problem~\ref{problem: (p,q)-crossing game}, one could try to prove that having extra power allows one to win on a significantly longer board.
\begin{conjecture}\label{conjecture: extra power means an epsilon longer board}
	The following hold:
\begin{enumerate}[(i)]
	\item for every $q\in \mathbb{N}$, there exists $\varepsilon >0$ such that for all $n$ sufficiently large, Maker wins the $(q+1, q)$ game on $S_{\lceil (1+\varepsilon)n\rceil \times n}$;
	 \item for every $p\in \mathbb{N}$, there exists $\varepsilon >0$ such that for all $m$ sufficiently large, Breaker wins the $(p, p+1)$ game on $S_{m \times \lceil (1+\varepsilon)m\rceil}$.
\end{enumerate}
\end{conjecture}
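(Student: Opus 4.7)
\noindent\textbf{Plan for Conjecture~\ref{conjecture: extra power means an epsilon longer board}.} The plan is to prove (i) and then deduce (ii) from (i) via duality. For the deduction, Lemma~\ref{lemma: duality} transforms the $(p,p+1)$-crossing game on $S_{m\times\lceil(1+\varepsilon)m\rceil}$ into an equivalent game on the dual board $S^*_{m\times\lceil(1+\varepsilon)m\rceil}\cong S_{(\lceil(1+\varepsilon)m\rceil+1)\times(m-1)}$, in which Breaker takes the role of a horizontal player with power $p+1$ aiming to cross a board of aspect ratio approximately $(1+\varepsilon)$. This matches the setup of (i) with $q=p$, up to the awkwardness that the dual horizontal player moves second; one would need the mild strengthening of (i) in which Maker is allowed to ``skip'' her first turn, but as is standard in Maker--Breaker games, surrendering a move can never harm the skipping player, so such a strengthening should hold with the same $\varepsilon$.

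The proposed approach for (i) is to amplify Maker's winning strategy for the balanced $(q,q)$-game on $S_{(n+1)\times n}$ (available by Proposition~\ref{proposition: (p,p)-game}(i) via strategy-stealing, with an explicit form derivable from Proposition~\ref{proposition: Join wins (k,k)-game on k-positive}) using her single extra edge per turn. First, I would partition the board $S_{\lceil(1+\varepsilon)n\rceil\times n}$ into a chain of $K=\Theta(1/\varepsilon)$ overlapping \emph{windows} $W_1,\dots,W_K$, each of length $n+1$, with consecutive windows sharing a thin \emph{connector region}. Maker would designate an \emph{active window} $W_i$ in which she plays $q$ of her $q+1$ edges each turn following a reference $(q,q)$-winning strategy, while her remaining edge each turn is invested in the connector between $W_i$ and $W_{i+1}$ so as to assemble a short pre-built blue bridge. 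Once Maker completes a left-right crossing of $W_i$ \emph{and} the bridge in the connector is sufficiently developed, she shifts her active window to $W_{i+1}$; after $K$ such shifts Maker has completed a left-right crossing of the whole board.

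The main obstacle is to make this amplification scheme robust against an adversarial Breaker. In particular, Breaker may concentrate his $q$ edges inside the active window $W_i$ to sabotage the reference strategy; the crux of the proof would be a quantitative \emph{progress lemma} showing that after any $\Theta(n)$-turn block of play, either Maker has completed a $W_i$-crossing, or she has accumulated enough connector-edges to the right to migrate safely to $W_{i+1}$. Breaker may also directly attack a partially built connector, in which case Maker would need to respond \emph{locally} within the connector rather than progressing in $W_i$; an amortised accounting of her extra-edge budget against Breaker's attacks on the connectors should then ensure global success, with the isoperimetric bound of Lemma~\ref{lemma: isoperimetric lemma} providing the right kind of book-keeping. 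Designing a reference strategy for the active window that is flexible enough to tolerate the temporary diversion of Maker's attention to a connector under attack, and quantifying the resulting trade-off --- which is likely to force $\varepsilon$ to be very small, perhaps $\exp(-\mathrm{poly}(q))$ --- strike me as the principal technical challenges.
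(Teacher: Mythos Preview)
The statement you are addressing is a \emph{conjecture} in the paper, not a theorem: the authors pose it in the concluding remarks as an open problem and offer no proof or proof sketch. There is therefore no ``paper's own proof'' against which to compare your proposal; any valid argument would constitute new mathematics beyond what the paper contains.

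That said, your plan has substantive gaps that would need to be closed before it could be called a proof outline. The most serious is the window-migration step in part~(i): while Maker is active in $W_i$, Breaker is free to spend turns contaminating $W_{i+1},W_{i+2},\dots$, so by the time Maker shifts to $W_{i+1}$ she is not facing a fresh $(q,q)$-board but one in which Breaker may already have claimed $\Theta(n)$ edges. Your ``progress lemma'' does not address this; the reference $(q,q)$-strategy from Proposition~\ref{proposition: (p,p)-game}(i) is obtained by strategy-stealing, gives no control over how badly it degrades on a pre-corrupted board, and the explicit strategy of Proposition~\ref{proposition: Join wins (k,k)-game on k-positive} requires a $(q+1)$-positive structure that the contaminated window need not possess. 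A workable amplification scheme would need either a robustness statement for the reference strategy under adversarial pre-seeding, or a global potential argument that tracks Breaker's investment across all windows simultaneously; neither is sketched.

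For the duality reduction of (ii) to (i), your claim that ``surrendering a move can never harm the skipping player'' is stated backwards. Extra moves never hurt Maker, so a second-player Maker win implies a first-player Maker win; but you need the reverse implication (first-player win $\Rightarrow$ second-player win), which is false in general. Deducting (ii) from (i) genuinely requires the stronger, second-player version of (i), and that strengthening is not automatic.
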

An even more basic problem is showing that when the powers are balanced, Breaker should win on a narrower board, overcoming Maker's first-player advantage.
\begin{conjecture}\label{conjecture: at equal power Breaker wins on an epsilon -narrowerboard}
For every $\varepsilon >0$ and every $p\in \mathbb{N}$, there exists  $m_0\in \mathbb{N}$ such that for all $m\geqslant m_0$, Breaker wins the $(p,p)$-crossing game on $S_{m \times \lceil (1-\varepsilon)m\rceil}$.
\end{conjecture}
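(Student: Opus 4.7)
The conjecture is trivial for $p=1$ thanks to the explicit pairing strategy behind the Bridg-it results of Section~\ref{subsection: (1,1)-crossing}: the condition $n = \lceil (1-\varepsilon)m \rceil$ gives $m - n \geqslant \varepsilon m - 1 \geqslant 2$ as soon as $m \geqslant 3/\varepsilon$, and the (dual of the) Lehman reflection strategy for Bridg-it then provides Breaker with an explicit win on $S_{m \times n}$. The real content of the conjecture lies in the case $p\geqslant 2$, where the tools developed in this paper fall short: Proposition~\ref{proposition: (p,p)-game}(ii) only gives the much weaker bound $m \geqslant (p+1)(n+1)$, and the Lehman-type sufficient condition of Proposition~\ref{proposition: Join wins (k,k)-game on k-positive}, dualised to apply to Breaker, is unusable because the dual grid $S^*_{m\times n}$ has bounded edge-to-vertex ratio and so cannot be $(p+1)$-positive for $p\geqslant 2$, regardless of $m$. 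A fundamentally new idea seems to be required.

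The plan is to attempt a multi-pairing generalisation of the Lehman reflection strategy. Fix $p \geqslant 2$ and $\varepsilon > 0$, and consider the board $S_{m \times n}$ with $n = \lceil (1-\varepsilon) m \rceil$. One would try to construct $p$ pairings $\pi_1, \ldots, \pi_p$ of (most of) the edges of $S_{m \times n}$, for instance arising from $p$ reflections through $p$ distinct vertical axes placed inside the extra width $m - n \sim \varepsilon m$, such that every left-right crossing path of $S_{m \times n}$ contains both edges of some pair in each $\pi_i$. Breaker's strategy would then be to respond to each of Maker's $p$ moves in a given turn by playing the partner of one of Maker's edges under one of the $\pi_i$, cycling through the pairings in such a way that the number of Maker edges whose $\pi_i$-partner is unclaimed remains bounded for every~$i$. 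Provided such a bounded-deficit invariant can be maintained indefinitely, any left-right crossing path that Maker tries to build would be blocked by some Breaker edge in at least one of the pairings.

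The main technical steps are therefore: (i) constructing the $\pi_i$ explicitly and verifying the crossing-blocking property for each; (ii) designing a cycling rule for Breaker; (iii) proving a bounded-deficit invariant. The hard part will be (iii). In the $(1,1)$-case a single pairing suffices because Breaker responds to each Maker edge immediately. In the $(p,p)$-case, however, Maker can play both edges of some $\pi_i$-pair in one turn, forcing Breaker to fall behind in that $\pi_i$; worse, she can repeat such attacks in distinct pairings, potentially accumulating unbounded deficits in all of them simultaneously. Overcoming this would appear to require either a subtle potential argument trading deficits off across pairings, or a more intricate choice of $\pi_i$ (perhaps using ``hypergraph'' pairings where each edge belongs to several pairs) ensuring that any two-edge-per-turn attack by Maker on one $\pi_i$ automatically helps Breaker elsewhere. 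Implementing such a scheme on a grid of aspect ratio only barely exceeding~$1$ appears genuinely delicate, and is the reason the conjecture remains open.
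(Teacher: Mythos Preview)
The statement you were asked to prove is listed in the paper as a \emph{conjecture} in the concluding remarks; the paper gives no proof, and indeed presents it as open. Your submission is consistent with this: it is not a proof but a discussion of why the problem is hard, culminating in the sentence ``the conjecture remains open.'' So there is nothing to compare on the level of proofs.

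A few remarks on the content of your discussion. Your treatment of the case $p=1$ is essentially correct: once $m\geqslant n+2$, Breaker wins the $(1,1)$-crossing game on $S_{m\times n}$ via the mirroring strategy mentioned in Section~\ref{subsection: (1,1)-crossing}. (Calling this the ``Lehman reflection strategy'' is a slight conflation --- the reflection argument is separate from Lehman's Shannon switching game analysis --- but the conclusion stands.) Your identification of the gap between Proposition~\ref{proposition: (p,p)-game}(ii) and the conjecture for $p\geqslant 2$ is also accurate and matches the paper's own framing of the question.

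The multi-pairing sketch you outline is a natural first thought, and your diagnosis of why step~(iii) fails is sound: with $p\geqslant 2$ Maker can claim both members of a pair in a single turn, and there is no obvious bookkeeping that prevents deficits from accumulating across several pairings simultaneously. This is a genuine obstruction, not merely a technicality, and you are right that something new would be needed. But to be clear: what you have written is an informed commentary on an open problem, not a proof attempt with a specific gap to repair.
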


In a different direction, one may ask for optimal bounds on $m$ in Theorem~\ref{theorem: (2q-1,q)-game}. 
\begin{question}\label{question: optimal m for (2q-1,q)}
Let $n,q \in \mathbb{N}$. What is the smallest $m_0=m_0(n,q)$ such that Breaker wins the $(2q-1,q)$-crossing game on $S_{m_0 \times n}$? In particular, for $q$ fixed, is $m_0(n,q)$ subexponential in $n$?
\end{question}
A related question, which would help improve the bounds on $m$ for the Breaker strategy we developed in the proof of Theorem~\ref{theorem: (2q-1,q)-game} is the following:
\begin{question}\label{question: length of Bridg'it}
Under perfect play, how long does a game of Bridg-it last?
\end{question}
\noindent We make no attempt to answer this question here, however we believe that it may be possible to shed some light on the answer through careful analysis of the Maker-win strategy recorded in Theorem~\ref{theorem: Maker winds Bridg-it with any first move}.

In yet another direction, efforts to apply the biased Erd{\H o}s--Selfridge~\cite{Beck82, ErdosSelfridge73} criterion to Problem~\ref{problem: (p,q)-crossing game} leads to some intriguing questions on weighted sums over crossing paths, connected to the study of fugacity in statistical physics and to problems in analytic combinatorics (see e.g.~\cite{BousquetGuttmannJensen05}). Explicitly, let $\mathcal{H}(m,n)$ denote the collection of all left-to-right crossing paths in the rectangle $S_{m \times n}$.  Given a path $\pi \in \mathcal{H}(m,n)$, let $\ell(\pi)$ denote its length. Then the biased Erd{\H o}s--Selfridge criterion due to Beck implies that if 
\begin{align}\label{inequality: biased Erdos--Selfridge criterion}
\sum_{\pi \in \mathcal{H}} (1+q)^{-\frac{\ell(\pi)}{p}} < \frac{1}{1+q}, 
\end{align}
then Breaker has a winning strategy for the $(p,q)$-crossing game on $S_{m \times n}$.  In particular, suppose $m=\rho n$ for some $\rho>0$, and that we knew that, as $n\rightarrow \infty$, the number of crossing paths of $S_{m \times n}$ of length $\ell$ grew no faster than $(\lambda_{\rho}+o(1))^{\ell}$, for some $\rho$-dependent constant $\lambda_{\rho}$ (this would be a ``crossing path'' analogue of the connective constant familiar from the study of self-avoiding walks). Then (\ref{inequality: biased Erdos--Selfridge criterion}) would imply that Breaker has a winning strategy whenever $\lambda_{\rho} < (1+q)^{\frac{1}{p}}$. If for some $\rho<3$ the value of $\lambda_{\rho}$ were found to be sufficiently small so that $\lambda_{\rho} < 2$, this would show that Breaker wins the $(2,3)$-crossing game on $S_{m \times n}$ for all $n$ sufficiently large, giving a non-trivial improvement on what we know about that game. Of especial interest would be the case $\rho=1$ --- one would guess that Breaker's extra power in the $(2,3)$-crossing game would allow him to win on $S_{n \times n}$, say, but we have currently no proof of even this weakening of Conjecture~\ref{conjecture: extra power means an epsilon longer board}(ii).

Finally, variants of our games on other lattices or where the players claim vertices rather than edges, as discussed in Section~\ref{section: other graphs and other games}, are both interesting and almost completely open.
\section*{Acknowledgements}
We are grateful to the anonymous referees for their careful work and insightful comments on the paper.
%Avoider--Enforcer ( Mis{\`e}re) version. Note for Shannon switching game this was solved on matroids by Hamidoune and Las Vergnas~\cite{HamidouneLasVergnas88}
%Erd{\H o}s--Selfridge criterion, connection to statistical physics


\begin{thebibliography}{10}

	
	\bibitem{Beck82}
	J{\'o}zsef Beck.
	\newblock Remarks on positional games. i.
	\newblock {\em Acta Mathematica Hungarica}, 40(1-2):65--71, 1982.
	
	\bibitem{Beck85}
	J{\'o}zsef Beck.
	\newblock Random graphs and positional games on the complete graph.
	\newblock In {\em North-Holland Mathematics Studies}, volume 118, pages 7--13.
	Elsevier, 1985.
	
	\bibitem{Beck93}
	J{\'o}zsef Beck.
	\newblock Achievement games and the probabilistic method.
	\newblock {\em Combinatorics, Paul Erd{\H o}s is Eighty}, 1:51--78, 1993.
	
	\bibitem{Beck94}
	J{\'o}zsef Beck.
	\newblock Deterministic graph games and a probabilistic intuition.
	\newblock {\em Combinatorics, Probability and Computing}, 3(1):13--26, 1994.
	
	\bibitem{Beck08}
	J{\'o}zsef Beck.
	\newblock {\em Combinatorial games: tic-tac-toe theory}, volume 114 of {\em
		Encyclopedia of mathematics and its applications}.
	\newblock Cambridge University Press, 2008.
	
	\bibitem{BednarskaLuczak00}
	Ma{\l}gorzata Bednarska and Tomasz {\L}uczak.
	\newblock Biased positional games for which random strategies are nearly
	optimal.
	\newblock {\em Combinatorica}, 20(4):477--488, 2000.
	
	\bibitem{BednarskaLuczak01}
	Ma{\l}gorzata Bednarska and Tomasz {\L}uczak.
	\newblock Biased positional games and the phase transition.
	\newblock {\em Random Structures \& Algorithms}, 18(2):141--152, 2001.
	
	\bibitem{BollobasRiordan06}
	B{\'e}la Bollob{\'a}s and Oliver Riordan.
	\newblock {\em Percolation}.
	\newblock Cambridge University Press, 2006.
	
	\bibitem{BousquetGuttmannJensen05}
	Mireille Bousquet-M{\'e}lou, Anthony~J. Guttmann, and Iwan Jensen.
	\newblock Self-avoiding walks crossing a square.
	\newblock {\em Journal of Physics A: Mathematical and General}, 38(42):9159,
	2005.
	
	\bibitem{ChvatalErdos78}
	Va{\v{s}}ek Chv{\'a}tal and Paul Erd{\H o}s.
	\newblock Biased positional games.
	\newblock In {\em Annals of Discrete Mathematics}, volume~2, pages 221--229.
	Elsevier, 1978.
	
	\bibitem{DayFalgasRavry19+}
	A.~Nicholas Day and Victor Falgas-Ravry.
	\newblock Maker-{B}reaker {P}ercolation {G}ames {II}: {E}scaping to {I}nfinity.
	\newblock {\em ArXiv preprint, arXiv ref: 1907.00210}, 2019.
	
	\bibitem{ErdosSelfridge73}
	Paul Erd{\H o}s and John~L. Selfridge.
	\newblock On a combinatorial game.
	\newblock {\em Journal of Combinatorial Theory, Series A}, 14(3):298--301,
	1973.
	
	\bibitem{GebauerSzabo09}
	Heidi Gebauer and Tibor Szab{\'o}.
	\newblock Asymptotic random graph intuition for the biased connectivity game.
	\newblock {\em Random Structures \& Algorithms}, 35(4):431--443, 2009.
	
	\bibitem{Harris60}
	Theodore~E. Harris.
	\newblock A lower bound for the critical probability in a certain percolation
	process.
	\newblock In {\em Mathematical Proceedings of the Cambridge Philosophical
		Society}, volume~56, pages 13--20. Cambridge University Press, 1960.
	
	\bibitem{HaywardRijswijck06}
	Ryan~B. Hayward and Jack van Rijswijck.
	\newblock Hex and combinatorics.
	\newblock {\em Discrete Mathematics}, 306(19-20):2515--2528, 2006.
	
	\bibitem{Kesten80}
	Harry Kesten.
	\newblock The critical probability of bond percolation on the square lattice
	equals 1/2.
	\newblock {\em Communications in mathematical physics}, 74(1):41--59, 1980.
	
		
		\bibitem{Krivelevich11}
		Krivelevich, Michael
		\newblock  {The critical bias for the hamiltonicity game is {$(1+o(1))n/\ln n$}}.
		\newblock {\em Journal of the American Mathematical Society}, 24(1):125--131, 2011.
		
	\bibitem{KuschRueSpiegelSzabo17}
	Christopher Kusch, Juanjo Ru{\'e}, Christoph Spiegel, and Tibor Szab{\'o}.
	\newblock On the optimality of the uniform random strategy.
	\newblock {\em Random Structures \& Algorithms}, 55(2):371--401, 2019.
	
	
	\bibitem{Lehman64}
	Alfred Lehman.
	\newblock A solution of the {S}hannon switching game.
	\newblock {\em Journal of the Society for Industrial and Applied Mathematics},
	12(4):687--725, 1964.
	
	\bibitem{Mansfield01}
	Richard Mansfield.
	\newblock Strategies for the {S}hannon switching game.
	\newblock {\em The American Mathematical Monthly}, 103(3):250--252, 1996.
	
	\bibitem{StojakovicSzabo05}
	Milo{\v{s}} Stojakovi{\'c} and Tibor Szab{\'o}.
	\newblock Positional games on random graphs.
	\newblock {\em Random Structures \& Algorithms}, 26(1-2):204--223, 2005.
	
\end{thebibliography}
\end{document}